\definecolor{citegreen}{rgb}{0,0.3,0}
\definecolor{refred}{rgb}{0.5,0,0}
\def\author@andify{
	\nxandlist {\unskip{} $\cdot$ \penalty-2}
	{\unskip {} $\cdot$ \penalty-2}
	{\unskip {} $\cdot$ \penalty-2}}
\title[Overview of regularity for the $p$-Laplacian in metric spaces]{An overview of regularity results for the Laplacian and $p$-Laplacian in metric spaces}
\let\oldemail\email
\def\email#1{\oldemail{\href{mailto:#1}{\textcolor{black}{#1}}}}
\author[I. Y.~Violo]{Ivan Yuri Violo}
\address{I. Y.~Violo,  Centro di Ricerca Matematica Ennio De Giorgi, Scuola Normale Superiore, Piazza dei Cavalieri 3, 56126 Pisa (PI), Italy}
\email{ivan.violo@sns.it}
\newcommand{\eps}{\varepsilon}
\newcommand{\rr}{\mathbb{R}}
\newcommand{\nn}{\mathbb{N}}
\newcommand{\nchi}{{\raise.3ex\hbox{$\chi$}}}
\newcommand{\sfd}{{\sf d}}
\newcommand{\Lip}{{\rm Lip}}
\newcommand{\id}{{\sf id}}
\renewcommand{\phi}{\varphi}
\newcommand{\restr}[1]{\lower3pt\hbox{$|_{#1}$}}
\newcommand{\X}{{\rm X}}
\definecolor{mygray}{gray}{0.9}
\newcommand{\diam}{{\rm diam}}
\newcommand{\la}{\langle}
\newcommand{\ra}{\rangle}
\renewcommand{\div}{{\rm div}}
\newcommand{\mea}{\mathfrak{m}}
\newcommand{\mm}{\mathfrak{m}}
\newcommand{\LIP}{\mathsf{LIP}}
\newcommand{\test}{{\sf{Test}}}
\renewcommand{\Cap}{{\rm Cap}}
\renewcommand{\d}{{\mathrm d}}
\newcommand{\loc}{\mathsf{loc}}
\newcommand{\W}{\mathit{W}^{1,2}}
\renewcommand{\H}[1]{{\rm Hess}(#1)}
\newcommand{\lip}{{\rm lip}}
\DeclareMathOperator*{\esssup}{ess\,sup}
\DeclareMathOperator*{\essinf}{ess\,inf}
\newcommand{\Xdm}{(\X,\sfd,\mm)}
\newcommand{\RCD}{\mathrm{RCD}}
\newcommand{\CD}{\mathrm{CD}}
\newcommand{\R}{\mathbb{R}}
\newcommand{\rcd}{\mathrm{RCD}}
\newcommand{\dom}{{\sf D}}
\newcommand{\vol}{{\rm Vol}}
\renewcommand{\Cap}{{\rm Cap}}
\renewcommand{\limsup}{\varlimsup}
\newcommand{\LL}{\mathcal{L}}
\newcommand{\alme}{\mea\text{-a.e.}}
\theoremstyle{plain}
\newtheorem{theorem}{Theorem}[section]
\newtheorem{lemma}[theorem]{Lemma}
\newtheorem{prop}[theorem]{Proposition}
\newtheorem{cor}[theorem]{Corollary}
\theoremstyle{definition}
\newtheorem{definition}[theorem]{Definition}
\newtheorem{remark}[theorem]{Remark}
\newtheorem{open}[theorem]{Open question}
\numberwithin{equation}{section}
\begin{document}
	\begin{abstract}
		We  review  some regularity results of the Laplacian and $p$-Laplacian in metric measure spaces. The focus is mainly on interior H\"older, Lipschitz and second-regularity estimates and on spaces supporting a Poincaré inequality or having  Ricci curvature bounded below. 
	\end{abstract}
	\maketitle
	\noindent MSC (2020): 
	35B65, 
        35J92,
	46E36, 
        58J05
	\medskip
	
	\noindent \underline{\smash{Keywords}}: nonlinear potential theory, metric  spaces, Laplacian, elliptic PDEs, regularity estimates, Ricci curvature, RCD spaces.

\tableofcontents
\section{Introduction}
In this note we will discuss some results about elliptic regularity in metric measure spaces.
We will be mainly concerned with equations of the type
\begin{equation}\label{eq:intro}
    \Delta_p u =f,
\end{equation}
where $\Delta_p$ is the $p$-Laplacian and $f$ is a given data in some $L^q$ space. It is by now well understood that if the underlying space is doubling and supports a Poincaré inequality then the classical De Giorgi-Nash-Moser theory applies and thus Harnack's inequalities and H\"older estimates are available. 
Nevertheless the H\"older regularity is the best one can obtain using solely the assumptions of doubling and Poincaré. Indeed there are examples of spaces satisfying these properties but where harmonic functions might not be Lipschitz. Furthermore the usual $L^2$-second order estimates are missing in this general setting, as the second order Sobolev space is not even clearly defined.  

In  recent years it has become more and more clear that to obtain second-order and the Lipschitz estimates for elliptic equations in metric spaces it is natural to assume that the Ricci curvature is bounded below in a weak sense. In the setting of Riemannian manifolds  the connection between Ricci curvature bounds and PDEs estimates  has been well known for a long time going back to the  gradient estimates  for harmonic functions by  Cheng and Yau  \cite{ChengYau75,yau1975harmonic}   and the Li-Yau inequality for the heat equation \cite{LiYau} (see also \cite{gidas1981global,WZ11,cogreen,SC92}  for other examples).

In the context of metric spaces the concept of Ricci curvature bounded below was introduced with \textit{Curvature Dimension (CD) condition}.  This framework was immediately demonstrated to be sufficiently powerful to derive fundamental geometric and functional inequalities like the Brunn-Minkowsi \cite{Sturm06-2}, (log)Sobolev \cite{Lott-Villani09,CM17},  isoperimetric \cite{CM17-Inv} and spectral \cite{Ketterer15}  inequalities. However,  only after the introduction of the stronger \textit{Riemannian Curvature Dimension (RCD) condition} condition it became possible to fully exploit  the geometric analysis machinery leading to  even stronger geometric properties and the development of more advanced calculus tools. Recall, for examples, the non-smooth splitting theorem  \cite{GiMar23,Gigli13over} and the rectifiability of RCD spaces \cite{Mondino-Naber14}.  For a detailed introduction to CD and RCD spaces and their properties we refer to the surveys \cite{AmbICM,gigli2023giorgi,SturmECM,Villsurvey} and references there in.

The advanced analytical tools in RCD spaces primarily arise from the possibility of integrating by parts, thereby allowing the definition of linear differential operators such as the Laplacian, divergence, and heat semi-group. Moreover, it is possible to give a meaningful notion of the second-order Sobolev space $W^{2,2}$ that contains many non-trivial functions as a consequence  of the following key inclusion
\begin{equation}\label{eq:intro l2}\tag{$\star$}
    \{u \in \W(\X) \ : \ \Delta u \in L^2(\mm)\}\subset W^{2,2}(\X)
\end{equation}
(see \cite{Gigli14}). For more on the calculus available in RCD spaces we refer to \cite{GP20,Gigli12}. Note that, a-posteriori, we can interpret \eqref{eq:intro l2} as an elliptic regularity result for the Laplacian operator with its own interest. In addition, Lipschitz estimates for harmonic functions and the Poisson equation have been established in RCD setting \cite{Kell17,Jiang} and, more recently, even for harmonic maps into CAT(0) spaces \cite{gigliharm,mondino2022lipschitz,boundaryharm}.
Despite these developments and the \eqref{eq:intro l2} being known for a long time, a comprehensive second-order elliptic regularity  theory is still missing. One of the main difficulties is the lack of a difference quotients method, which makes it complicated to treat operators other than the Laplacian.

Partly motivated by this, in collaboration  with L.\ Benatti \cite{BV24}, we recently studied the second-order regularity properties of the $p$-Laplacian operator in RCD spaces. Our main result, roughly speaking, is that
\begin{equation}\label{eq:intro lp} \tag{$\star_p$}
    \Delta_p u\in L^2(\mm) \implies |D u|^{p-2}\nabla u \in \W(\X),
\end{equation}
for bounded $\RCD$ spaces and for $p$ sufficiently close to two (see Theorem \ref{thm:main rcd inf} and Remark \ref{rmk:K negative}).  Additionally we managed to show that a suitable subclass of $p$-harmonic functions  are locally Lipschitz (see Theorem \ref{thm:lim reg electro}).

The goal of  this note is then twofold. The first  is to give a short  overview of the regularity results for the Laplacian and $p$-Laplacian  in metric measure spaces with focus on, but not limited to,  RCD spaces and on the aspects that we brtiefly described above. The second is to give an overview of the proof of \eqref{eq:intro lp}. 

The exposition will be organized as follows. In Section \ref{sec:pre} we fix some key notations and standing assumptions.  The main discussion starts in Section \ref{sec:def plap}  with the definition of the $p$-Laplacian via integration by parts. Then in Section  \ref{sec:boundary} we will present some  well established results for the existence and uniqueness of Dirichlet and Neumann boundary value problems on space supporting a Poincaré inequality. Adding also a locally doubling  assumption we overview in Section \ref{sec:holder} the validity of Harnack's and H\"older estimates. In Section \ref{sec:laplacian} we move to the setting of Ricci curvature bounded below. We will review the second order and Lipschitz regularity properties for the Laplacian,  present some recent results related to the regularity of eigenfunctions and  briefly discuss the problem of the unique continuation property. In Section \ref{sec:plap} we will present the main results of \cite{BV24} about the $p$-Laplacian and give an overview of the proof in Section \ref{sec:proof}. 
Along the exposition we will also state some open questions.

\bigskip

\textbf{Acknowledgments:}
The author is thankful to L.\ Benatti, N.\ De Ponti, S.\ Farinelli, N.\ Gigli,   J. Liu, F.\ Nobili, T.\ Rajala and S.\ Schulz for stimulating discussions around the topics of this note.

 This survey is based on  the talk  given by the author at the ``School and Conference on Metric Measure Spaces, Ricci Curvature, and Optimal Transport"   (MeRiOT 2024) in Varenna organized by F.\ Cavalletti, M.\ Erbar, J.\ Maas and K.T.\ Sturm, to whom we express our gratitude for this event.

\section{Standing assumptions and preliminaries}\label{sec:pre}
A metric measure space is a triple $\Xdm$, where $(\X,\sfd)$ is a complete and separable metric space and $\mm$ is a non-negative Borel measure. We  denote by $\LIP(\X)$ the space of Lipschitz function sin $\X$ and for all $\Omega\subset \X$ open we denote by $\LIP_{bs}(\Omega)$ the subset of $\LIP(\X)$ of functions having support contained in $\Omega.$  $\mathcal H^n$ stands for the $n$-dimensional Hausdorff measure on $(\X,\sfd).$ We also denote the local Lipschitz constant by
\[
\lip f (x)\coloneqq \limsup_{y \to x} \frac{|f(x)-f(y)|}{\sfd(x,y)},
\]
set to zero if $x$ is isolated.

We assume the reader to be familiar with the notion of Sobolev space $W^{1,p}(\X)$ over a metric measure space $\Xdm$. For the relevant background we refer the reader to the books \cite{GP20,BB,HKST15}. We limit ourselves to recall below a few facts and conventions that will be used through the note. 

For any $f \in W^{1,p}(\X)$ there is a meaningful notion of modulus of the gradient $|D f|_p\in L^p(\mm)$, called minimal weak upper gradient. For  any $\Omega\subset \X$ open the local Sobolev space $f \in W^{1,p}_\loc(\Omega)$  is the space of  functions $f\in L^p(\Omega)$  such that $f\eta\in W^{1,p}(\X)$ for all $\eta \in \LIP_{bs}(\Omega).$ 

In general $|D f|_p$ might depend highly on the exponent $p.$ In particular if $f \in W^{1,p}\cap W^{1,q}(\X)$ it can be that $|D f|_{p}\neq |D f|_q$ or, if $f \in W^{1,p}\cap L^q(\X)$ and $|D f|_p \in L^q(\X)$ it might be that $f \notin W^{1,q}(\X).$ Additionally the space $W^{1,2}(\X)$ is in general not Hilbert.

Nevertheless, throughout  this note and unless differently specified we will maintain the following standing assumptions:
\begin{enumerate}[label=\roman*)]
    \item  if $f \in W^{1,p}\cap W^{1,q}(\X) $, then $|D f|_p=|D f|_q$ $\mm$-a.e., moreover if $f\in W^{1,p}(\X)$ and $|D f|_p,f\in L^q(\X)$, then $f \in W^{1,q}(\X).$
    \item  \textit{$\Xdm$ is infinitesimally Hilbertian},\ i.e.\  the space $W^{1,2}(\X)$ is a Hilbert space.
\end{enumerate}
Condition i) is usually called  \textit{strong $p$-independency of weak upper gradients} (see \cite{GN22}). Condition ii) was instead introduced in \cite{Gigli09}. It was shown in \cite{Cheeger99} that i) holds for all locally doubling spaces  supporting a  local Poincaré inequality (see Definition \ref{def:doubling} and Definition \ref{def:poinc} respectively). Moreover  $\RCD(K,\infty)$ spaces  (see Definition \ref{def:rcd}) satisfy  ii) by very definition and   i)  thanks to \cite{GH14}.
Thanks to i) we will simply write $|D f|$ in place of $|D f|_p$. Moreover i) and ii) combined imply that, for all $p\in(1,\infty)$, the following pointwise scalar product is pointwise bilinear:
\[
W^{1,p}(\X)\times W^{1,p}(\X)\ni (f,g)\mapsto \la \nabla f, \nabla g\ra\coloneqq \frac12 \bigg(|D(f+g)|^2-|D f|^2-|D g|^2\bigg).
\]
In particular this implies that the weak upper gradient satisfies the usual parallelogram identity which implies that
\begin{equation}\label{eq:unif conv}
    \text{ $W^{1,p}(\X)$ is uniformly convex and thus reflexive for all $p\in(1,\infty)$,}
\end{equation}
 see \cite[Prop.\ 4.4]{GN22}  for the proof.

We finally recall briefly the notion of $L^2$-tangent bundle $L^2(T\X)$ introduced in \cite{Gigli14}, which has the structure of $L^2$-normed $L^\infty$-module endowed with a pointwise norm $|\cdot | : L^2(T\X) \to  L^2(\mm)$. This space comes with a linear gradient operator map $\nabla : \W(\X) \to L^2(T\X)$ which satisfies $|\nabla f|=|D f|$. We refer to \cite{GP20} for an introduction on normed-modules and tangent bundles on metric spaces.

\section{First order theory for the Laplacian and \texorpdfstring{$p$}{p}-Laplacian in metric setting}\label{sec:first}

\subsection{Variational approach and  integration by parts}\label{sec:def plap}
The most straightforward way to define solutions of an elliptic PDE in a metric measure spaces is by considering the related minimization problem.  The simplest case is the one of $p$-harmonic functions. 
\begin{definition}[$p$-harmonic functions]
    Let $\Xdm$ be a metric measure space, let $\Omega \subset \X$ be an open and fix $p\in(1,\infty)$. We say that $u \in W^{1,p}_\loc(\Omega)$ is  $p$-superharmonic (resp.\ $p$-subharmonic) in $\Omega$ provided
    \begin{equation}
        \int_\Omega |D u |_p^p\,\d \mm\le   \int_\Omega |D (u+\phi) |_p^p\,\d \mm, \quad \forall \phi \in \LIP_{bs}(\Omega), \quad \phi \ge0 \text{ (resp.\ $\phi \le 0$)}.
    \end{equation}
    If $u$ is both  $p$-superharmonic and $p$-subharmonic  we say that $u$ is $p$-harmonic in $\Omega.$
\end{definition}
Note that for the above definition no assumptions on $\Xdm$ are needed (we could even omit the standing assumptions i) and ii) stated in Section \ref{sec:pre}). 
For an account on the theory of $p$-harmonic function in metric spaces, without assumptions i) and ii) but locally doubling and supporting a Poincaré inequality,  we refer to \cite[Chapter 7]{BB} and references therein.

It is natural to ask whether $p$-harmonic functions satisfy the associated $p$-Laplace equation: $\Delta_p u=0$. To show this we need a suitable notion of $p$-Laplacian. 
\begin{definition}[$p$-Laplacian]
    Let $\Xdm$ be a metric measure space (satisfying  assumptions i) and ii) stated in Section \ref{sec:pre}), $\Omega\subset \X$ be open and $u\in W^{1,p}_\loc(\Omega)$. We say that $\Delta_p u\ge f$ (resp.\ $\Delta_p u\le f$ )  in $\Omega$ for some  $f\in L^1_\loc(\Omega)$ provided  
    \begin{equation}\label{eq:lapl def}
        -\int_\Omega |D u|^{p-2}\la \nabla u, \nabla \phi\ra \d \mm \ge \int_\Omega f \phi \d \mm, \quad \forall \phi \in \LIP_{bs}(\Omega), \quad \phi \ge0 \text{ (resp.\ $\phi \le 0$)}.
    \end{equation}
Moreover we say that $\Delta_p u= f$ if equality holds in \eqref{eq:lapl def}, in which case $f$ is unique and it is denoted by $\Delta_p u.$
\end{definition}
Since it can be that $p-2<0$, for convention we set $|D u|^{p-2}\la \nabla u, \nabla \phi\ra=0$ whenever $|Du|=0$. With this convention and the Cauchy–Schwarz inequality we have that $||D u|^{p-2}\la \nabla u, \nabla \phi\ra |\le |D u|^{p-1}\Lip(\phi)\in L^1_{\loc}(\Omega)$ and so the left hand side of \eqref{eq:lapl def} is well defined. For $p=2$ we will denote for simplicity $\Delta\coloneqq \Delta_2$. 

It is straightforward to check that the definition of $p$-Laplacian is compatible with the notion of $p$-harmonic functions.
\begin{prop}\label{prop:equivalence p}
    A function $u\in W^{1,p}_\loc(\Omega)$ is $p$-subharmonic (resp.\ $p$-superharmonic) in $\Omega$ if and only if $\Delta_p \ge 0$ (resp.\ $\Delta_p\le 0$) in $\Omega$. In particular $u$ is $p$-harmonic if and only if $\Delta_p u=0.$
\end{prop}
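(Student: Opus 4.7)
My plan is to show that the variational inequality defining $p$-sub/superharmonicity is precisely the first-order optimality condition for the convex $p$-energy, and that this first-order condition coincides with the distributional inequality appearing in the definition of $\Delta_p$. Both directions can be treated simultaneously by studying, for each $\phi \in \LIP_{bs}(\Omega)$, the scalar function
\[ g_\phi(t) := \int_\Omega |D(u + t\phi)|^p \, \d\mm, \qquad t \in \R. \]
Since $u + t\phi$ coincides with $u$ outside $\supp\phi$, the increment $g_\phi(t) - g_\phi(0)$ reduces to an integral over the compact set $\supp\phi \subset \Omega$ and is always finite.

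The first step is to show that $g_\phi$ is convex and of class $C^1$, with
\[ g_\phi'(t) = p\int_\Omega |D(u + t\phi)|^{p-2}\la \nabla(u + t\phi), \nabla \phi\ra \, \d\mm. \]
Convexity follows because the pointwise functional $|D\cdot|$ is a seminorm (sub-additivity and $1$-homogeneity of the minimal weak upper gradient) and $s \mapsto s^p$ is convex and non-decreasing on $[0,\infty)$. For the derivative formula I would use the polarization identity $|D(u+t\phi)|^2 = |Du|^2 + 2t\la \nabla u, \nabla \phi\ra + t^2 |D\phi|^2$ granted by infinitesimal Hilbertianity, which makes $t \mapsto |D(u+t\phi)|^p(x)$ pointwise differentiable for $\mm$-a.e.\ $x$; at points where $|D(u+t\phi)|(x) = 0$, the Cauchy--Schwarz inequality forces $\la \nabla(u+t\phi), \nabla \phi\ra(x) = 0$, so the pointwise derivative vanishes in agreement with the sign convention of the definition of $\Delta_p$. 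Bounding the difference quotients uniformly on any bounded interval of $t$ by a function of the form $C(|Du| + |D\phi|)^{p-1}\Lip(\phi)$ supported in $\supp\phi$, which lies in $L^1(\Omega)$ since $|Du|\in L^p_\loc(\Omega)$, justifies differentiation under the integral sign.

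The equivalences then follow from elementary one-dimensional convex analysis. Setting $\psi := -\phi$, the condition that $u$ is $p$-subharmonic is equivalent to $g_\psi(s) \ge g_\psi(0)$ for every $\psi \in \LIP_{bs}(\Omega)$ with $\psi \ge 0$ and every $s \le 0$; for a $C^1$ convex function this is equivalent to $g_\psi'(0) \le 0$. Substituting the derivative formula, this reads $-\int_\Omega |Du|^{p-2}\la \nabla u, \nabla \psi\ra \, \d\mm \ge 0$ for every such $\psi$, i.e.\ $\Delta_p u \ge 0$. The $p$-superharmonic case is symmetric (tested with $\phi \ge 0$ and $s \ge 0$, yielding $g_\phi'(0) \ge 0$ and hence $\Delta_p u \le 0$), and the $p$-harmonic statement follows by combining the two.

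I expect the only real subtlety to lie in the differentiability step for $p \in (1,2)$, where $s \mapsto s^{p/2}$ is not $C^1$ at the origin. The compatibility between the Cauchy--Schwarz vanishing and the $0/0$ convention in the definition of $\Delta_p$ is precisely what makes the pointwise derivative exist and the dominated convergence argument go through; everything else is routine once convexity and the derivative formula are in hand.
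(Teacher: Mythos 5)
Your proposal is correct, and the first-variation argument you carry out is precisely the one the paper points to (the paper itself does not reproduce a proof but cites \cite[Theorem 2.5]{GR19} for $p=2$, noting that it adapts to general $p$). The care you take with the $p\in(1,2)$ case is the genuine content: at a point $x$ where $Q(t)\coloneqq|D(u+t\phi)|^2(x)$ vanishes, the Cauchy--Schwarz inequality forces $Q'(t)=0$ too, so $Q(t+s)=O(s^2)$ and $Q(t+s)^{p/2}=O(|s|^p)=o(|s|)$; this gives pointwise differentiability with zero derivative, matching the $0/0$ convention in the definition of $\Delta_p$, and the mean-value bound $|h_x'(\xi)|\le p\big(|Du|+|\xi||D\phi|\big)^{p-1}\Lip(\phi)\,\chi_{\supp\phi}\in L^1$ then lets dominated convergence go through. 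The one-dimensional convex-analysis reduction (restricting to $s\phi$, $s\ge0$, and reading off the sign of $g_\phi'(0)$) is exactly what one wants: necessity of the first-order condition is immediate, and sufficiency uses convexity of $g_\phi$, which you correctly derive from subadditivity and $1$-homogeneity of the minimal weak upper gradient together with convexity and monotonicity of $s\mapsto s^p$. No gaps.
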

For a proof in the case $p=2$, which can be easily adapted to general $p,$ see \cite[Theorem 2.5]{GR19}.

It is worth to mention that it is possible to define a notion of $p$-Laplacian and $p$-harmonic functions via integration by parts using the Cheeger differentiability structure introduced in \cite{Cheeger99}. However this requires some further assumptions on the underlying space and the notions one would get are in general not equivalent to one stated above. Nevertheless much of the first order theory would still apply (e.g.\ all the results in Section \ref{sec:boundary} and Section \ref{sec:holder}). See \cite[Section 7.1 and Appendix B.2]{BB}  for a more detailed discussion on this topic.

\subsection{Boundary value problems}\label{sec:boundary}
In this section we give some examples of existence and uniqueness results for boundary value problems involving the $p$-Laplacian. This list, far from comprehensive, it is meant to show that it is possible to obtain non-trivial solutions to several type of PDEs linked to natural minimization problems under mild assumptions on the underlying space. All the results are essentially folklore and appeared often and in multiple variants in the literature.

The main tool to show existence is the standard direct method of the calculus of variations, which requires the underlying space to satisfy a Poincaré inequality. 
\begin{definition}[Local Poincar\'{e} inequality]\label{def:poinc}
    A metric measure space $\Xdm$ supports a \emph{local Poincar\'{e} inequality} provided  there exist a constant $\lambda\ge 1$, called \textit{dilation constant} such that
			for every $f \in \LIP_\loc(\X)$ it holds
			\[
			\fint_{B_r(x)}|f-f_{B_R(x)}|\,\d\mm\leq C_P({R})\,r\fint_{B_{\lambda r}(x)}\lip(f)\,\d\mm
			\quad \forall\,\, 0<r\le R, \,\, \forall\, \, x\in\X,
			\]
            where $f_{B_R(x)}\coloneqq \fint_{B_r(x)}f\,\d\mm$ and $C_{P}(R)>0$ is a constant depending only on $R.$ 
    \end{definition}
    
Throughout this section we will assume without further notice that $\Xdm$ supports  local Poincar\'{e} inequality (in addition to assumptions i) and ii) of Section \ref{sec:pre} which are always present in this note).

We start by considering Dirichlet boundary value problems.  First we recall that the above local Poincaré inequality implies the following classical Poincaré-type inequality  for functions that are zero at the boundary (see e.g.\ \cite[Corollary 5.54]{BB}).
\begin{prop}[Poincaré inequality with zero-Dirichlet boundary conditions]\label{prop:dir poinc}
 Let $\Omega\subset \X$ be open and bounded and such that $\mm(\X\setminus \Omega)>0$. Then for all $p\in(1,\infty)$ it holds
 \begin{equation}\label{eq:poinc dir}\tag{D}
     \int_\Omega |u|^pd \mm\le C_{p,\Omega} \int |D u|^p\d \mm, \quad \forall u \in W^{1,p}_0(\Omega),
 \end{equation}
 where $C_{p,\Omega}>0$ is a constant depending only on $\Omega$ and $p.$
\end{prop}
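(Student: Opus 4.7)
My plan is to reduce the inequality by density to test functions $u \in \LIP_{bs}(\Omega)$, to exploit the vanishing of $u$ on a set of positive measure lying outside $\Omega$ but inside a large ball, and to upgrade from an $L^1$ bound to an $L^p$ bound via the chain rule. Since $\Omega$ is bounded and $\mm(\X\setminus\Omega)>0$, I first fix $x_0 \in \X$ and $R>0$ so that $\Omega \subset B := B_R(x_0)$ and $A := B \setminus \Omega$ has $\mm(A)>0$. For $u\in\LIP_{bs}(\Omega)$, extending by zero to $\X$ keeps $u$ Lipschitz, and $\lip u$ is supported in $\overline{\Omega}$.

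Applying the local Poincaré inequality with $r=R$ on $B$ gives $\int_B |u-u_B|\,\d\mm \le C \int_\Omega \lip u\,\d\mm$ for a constant $C$ depending only on $R$, $\mm(B)$ and $C_P(R)$. Since $u \equiv 0$ on $A$, one has
\begin{equation*}
\mm(A)\,|u_B| = \int_A |u-u_B|\,\d\mm \le \int_B |u-u_B|\,\d\mm,
\end{equation*}
and combining with $\int_\Omega |u|\,\d\mm \le \int_B |u-u_B|\,\d\mm + \mm(\Omega)|u_B|$ yields the $L^1$ estimate $\int_\Omega |u|\,\d\mm \le C_1 \int_\Omega \lip u\,\d\mm$. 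To pass to $L^p$ I apply this $L^1$ inequality to $|u|^p$, which still lies in $\LIP_{bs}(\Omega)$ because $u$ is bounded, using the pointwise chain rule $\lip(|u|^p)\le p|u|^{p-1}\lip u$ and Hölder's inequality:
\begin{equation*}
\int_\Omega |u|^p\,\d\mm \le Cp \int_\Omega |u|^{p-1}\lip u\,\d\mm \le Cp\,\Big(\!\int_\Omega |u|^p\,\d\mm\Big)^{\!\tfrac{p-1}{p}}\Big(\!\int_\Omega (\lip u)^p\,\d\mm\Big)^{\!\tfrac{1}{p}},
\end{equation*}
which rearranges to $\int_\Omega |u|^p\,\d\mm \le (Cp)^p \int_\Omega (\lip u)^p\,\d\mm$.

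The remaining step, which is the main obstacle, is to replace $\lip u$ by the minimal weak upper gradient $|Du|$ and to extend the inequality from $\LIP_{bs}(\Omega)$ to the whole of $W^{1,p}_0(\Omega)$. For Lipschitz functions one only has the one-sided bound $|Du|\le\lip u$, so the two quantities are not \emph{a priori} interchangeable on the right-hand side; however, once one knows that every $u \in W^{1,p}_0(\Omega)$ can be approximated by $u_n \in \LIP_{bs}(\Omega)$ with $u_n \to u$ in $L^p$ and $\lip u_n \to |Du|$ in $L^p$, the inequality above passes to the limit and gives the claim. This density/relaxation property is the non-trivial input: in the setting of the present note it is ensured by the standing assumption (i) on strong $p$-independence of minimal weak upper gradients combined with the Poincaré inequality, and is essentially the content of the arguments underlying \cite[Cor.\ 5.54]{BB}.
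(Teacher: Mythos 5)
The paper gives no proof of this proposition, only the citation to \cite[Cor.~5.54]{BB}, so there is no argument to compare against; your proposal supplies a direct one whose overall structure---an $L^1$ Poincaré bound on a large ball using the positive-measure exterior set $A$, upgraded to $L^p$ by applying the $L^1$ bound to $|u|^p$ with the chain rule $\lip(|u|^p)\le p|u|^{p-1}\lip u$ and Hölder, followed by density---is the standard one and each individual step you write is correct for $u\in\LIP_{bs}(\Omega)$.

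The genuine issue, which you partially identify but misdiagnose, is the passage from $\lip u$ on the right-hand side (the quantity appearing in Definition \ref{def:poinc}) to the minimal weak upper gradient $|Du|$ (the quantity appearing in the conclusion). Attributing this to assumption (i), strong $p$-independence, is wrong: that assumption only compares $|Du|_p$ and $|Du|_q$ across exponents for a fixed Sobolev $u$ and says nothing about $\lip u$ versus $|Du|$. What you actually need is density-in-energy of Lipschitz functions, i.e.\ the existence of $u_n\in\LIP_{bs}(\Omega)$ with $u_n\to u$ in $L^p$ and $\lip u_n\to |Du|$ in $L^p$; this is the Ambrosio--Gigli--Savaré equivalence of the relaxed-gradient and weak-upper-gradient definitions of the Sobolev space, or, under a doubling hypothesis (not assumed in Section \ref{sec:boundary}), Cheeger's theorem that $\lip u=|Du|$ $\mm$-a.e.\ for Lipschitz $u$. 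Without one of these, taking $u_n\in\LIP_{bs}(\Omega)$ converging to $u\in W^{1,p}_0(\Omega)$ in $W^{1,p}$-norm only yields $|Du_n|\to|Du|$, not $\lip u_n\to|Du|$, so your limit passage does not close. The cleaner organization---the one effectively used in \cite{BB}, where the Poincaré inequality is stated directly for $p$-weak upper gradients of Newtonian functions---is to first upgrade Definition \ref{def:poinc} to hold with $|Df|$ for all $f\in W^{1,p}(\X)$ (this is the single place where the relaxation/density theorem enters), and then run your computation entirely in terms of $|Du|$, using the chain rule $|D(|u|^p)|\le p|u|^{p-1}|Du|$ valid for weak upper gradients; the final density step then follows immediately from $\|\,|Du_n|-|Du|\,\|_{L^p}\to0$.
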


We can now state the first existence result. 
\begin{prop}[Poisson equation]
    Let $\Omega\subset \X$ be open and bounded with $\mm(\X\setminus \Omega)>0.$ Then for all $f \in L^{p'}(\Omega)$, $p'\coloneqq p/(p-1),$ and $g \in W^{1,p}(\Omega)$ there exists a unique solution to 
    \begin{equation}\label{eq:pepsi dir}
        \begin{cases}
            \Delta_{p} u=f,& \text{in $\Omega$},\\
            u \in g+W^{1,p}_0(\Omega).
        \end{cases}
    \end{equation}
     Moreover the solution to \eqref{eq:pepsi dir} coincides with the unique minimum of 
    \begin{equation}\label{eq:pepsi min}
        \inf \left\{F_p(u)\coloneqq \frac1p\int_\Omega |D u|^p-f u \d \mm \ : \
        u \in g+W^{1,p}_0(\Omega)
        \right\}. 
    \end{equation}
\end{prop}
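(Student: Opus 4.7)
The approach is the direct method of the calculus of variations, organised into three tasks: (i) producing a minimiser of $F_p$ over $g + W^{1,p}_0(\Omega)$, (ii) identifying minimisers with solutions of $\Delta_p u = f$ via the first variation, and (iii) proving uniqueness.

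For existence, I would set $v := u - g \in W^{1,p}_0(\Omega)$ and work with $\tilde F(v) := F_p(g+v)$. Coercivity follows from the pointwise bound $|D(g+v)|^p \geq 2^{1-p}|Dv|^p - |Dg|^p$, combined with H\"older and \eqref{eq:poinc dir} to estimate $|\int f(g+v)\,d\mm| \leq \|f\|_{p'}(\|g\|_p + C_{p,\Omega}^{1/p}\|Dv\|_p)$; Young's inequality then absorbs the linear piece into the leading $\|Dv\|_p^p$. A minimising sequence $v_n$ is thus bounded in $W^{1,p}_0(\Omega)$, and reflexivity from \eqref{eq:unif conv} yields a weakly convergent subsequence $v_n \weakto v_\infty$; since $W^{1,p}_0(\Omega)$ is convex and closed, $v_\infty \in W^{1,p}_0(\Omega)$. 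Weak lower semicontinuity of $v \mapsto \|Dv\|_p^p$ (convexity plus strong continuity on a reflexive space) together with continuity of the linear term under weak $W^{1,p}$-convergence (since $f \in L^{p'}$) finally gives $\tilde F(v_\infty) \leq \liminf_n \tilde F(v_n) = \inf \tilde F$.

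For the Euler--Lagrange step, I would test against $u + t\phi$ with $\phi \in \LIP_{bs}(\Omega)$, so that $u + t\phi \in g + W^{1,p}_0(\Omega)$ and $t \mapsto F_p(u+t\phi)$ is convex with a minimum at $t=0$. Using infinitesimal Hilbertianity, $|D(u+t\phi)|^2 = |Du|^2 + 2t\la \nabla u, \nabla \phi\ra + t^2|D\phi|^2$, so the pointwise derivative at $t = 0$ of $\tfrac{1}{p}|D(u+t\phi)|^p$ equals $|Du|^{p-2}\la \nabla u, \nabla \phi\ra$ (set to zero on $\{|Du|=0\}$, by the convention of Section~\ref{sec:def plap}). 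Passing the derivative inside the integral by dominated convergence (using convexity-induced monotonicity of the $t$-difference quotients, dominated by $|D(u \pm \phi)|^p - |Du|^p \in L^1(\Omega)$) produces $\int_\Omega |Du|^{p-2}\la \nabla u, \nabla \phi\ra\,d\mm = -\int_\Omega f\phi\,d\mm$, i.e.\ $\Delta_p u = f$. Conversely, any $u \in g + W^{1,p}_0(\Omega)$ solving $\Delta_p u = f$ minimises $F_p$: its first variation vanishes along $\LIP_{bs}(\Omega)$ and, by convexity of $F_p$ together with density of $\LIP_{bs}(\Omega)$ in $W^{1,p}_0(\Omega)$, along all admissible perturbations.

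For uniqueness, given two solutions $u_1, u_2$, I would subtract the equations and test against $u_1 - u_2 \in W^{1,p}_0(\Omega)$ (legitimate by $\LIP_{bs}(\Omega)$-density): the integrand rewrites as $|Du_1|^p + |Du_2|^p - (|Du_1|^{p-2} + |Du_2|^{p-2})\la \nabla u_1, \nabla u_2\ra$, and Cauchy--Schwarz $\la \nabla u_1, \nabla u_2\ra \leq |Du_1||Du_2|$ reduces it to the pointwise monotonicity expression $(|Du_1|^{p-1} - |Du_2|^{p-1})(|Du_1| - |Du_2|) \geq 0$. Integrating to zero forces equality $\mm$-a.e., hence $\nabla u_1 = \nabla u_2$ and $|D(u_1 - u_2)| = 0$ $\mm$-a.e., whereupon \eqref{eq:poinc dir} yields $u_1 = u_2$. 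The main technical subtlety I anticipate is the differentiation under the integral in the Euler--Lagrange step when $1 < p < 2$, where the integrand $|Du|^{p-2}\la \nabla u, \nabla \phi\ra$ is singular on $\{|Du| = 0\}$; the remedy is to exploit convexity of $F_p$ and replace exact differentiation by one-sided subgradient inequalities, which suffice to derive \eqref{eq:lapl def}.
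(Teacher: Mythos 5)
Your proof is correct and follows the same high-level strategy as the paper --- direct method via the Dirichlet Poincar\'e inequality \eqref{eq:poinc dir}, reflexivity from \eqref{eq:unif conv}, weak lower semicontinuity of the $p$-energy, and first variation via convexity of $t \mapsto F_p(u + t\phi)$ --- with each step fleshed out in more detail than the paper's sketch. Your coercivity and dominated-convergence (monotone difference quotients) arguments are sound, and you correctly handle the singular case $1<p<2$ via the convention on $\{|Du|=0\}$.

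The one place you genuinely depart from the paper is uniqueness. The paper proves uniqueness at the level of the \emph{minimizer}: it invokes the uniform (strict) convexity of the $W^{1,p}$-norm furnished by infinitesimal Hilbertianity, so two minimizers would force a strict energy decrease at the midpoint unless $|D(u_1 - u_2)| = 0$, whence Poincar\'e gives $u_1 = u_2$. You instead prove uniqueness at the level of the \emph{equation}, subtracting the two weak formulations and testing against $u_1 - u_2$ (extended to $W^{1,p}_0(\Omega)$ by density and H\"older, since $|Du_i|^{p-1} \in L^{p'}$), then exploiting pointwise monotonicity of $\xi \mapsto |\xi|^{p-2}\xi$ via Cauchy--Schwarz in the Hilbertian tangent module. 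Both are standard; your route is more explicit and does not presuppose the minimizer--solution equivalence (which you only establish afterwards), while the paper's is shorter once uniform convexity is in hand. One minor point worth spelling out in your chain: when the integrand vanishes $\mm$-a.e., equality in Cauchy--Schwarz together with $|Du_1| = |Du_2|$ gives $|\nabla u_1 - \nabla u_2|^2 = |Du_1|^2 - 2\la\nabla u_1,\nabla u_2\ra + |Du_2|^2 = 0$, which is where infinitesimal Hilbertianity enters to pass from ``equal norms and aligned'' to ``$\nabla u_1 = \nabla u_2$''; you use this implicitly.
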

\begin{proof}[Sketch of the proof] The existence of a minimizer $u\in g+W^{1,p}_0(\Omega)$ to \eqref{eq:pepsi min} is a straightforward application of the direct method in the calculus of variations. In particular using the Poincaré inequality in Proposition \ref{prop:dir poinc} and the reflexivity of $W^{1,p}(\X)$ (by \eqref{eq:unif conv}) we deduce the a minimizing sequence admits a $W^{1,p}$-weakly convergent subsequence. This combined with the lower semicontinuity of the $p$-energy under $W^{1,p}$-weak convergence (see \cite[Prop.\ 2.1.19]{GP20})  gives the existence of a minimizer. Uniqueness follows also easily from the uniform convexity of the $W^{1,p}$-norm (again by \eqref{eq:unif conv}). Finally the characterization of the minimizers \eqref{eq:pepsi dir} with the solutions of equation \eqref{eq:pepsi dir} can be easily deduced by using the convexity of $t\mapsto F_p(u+t\phi)$ for all $\phi \in \LIP_{bs}(\Omega)$ and differentiating at $t=0$ (cf.\ with Proposition \ref{prop:equivalence p}). 
\end{proof}
Minimizer of \eqref{eq:pepsi min} in metric setting were also analyzed in  \cite{GongManfrediMikko12}.

Next we present an example of Dirichlet eigenvalue boundary value problem, which can be dealt with as the previous one. For a proof of the uniqueness which covers this setting see \cite[Theorem 5.6]{MS20}.
\begin{prop}[Eigenvalue problem]
    Let $\Omega\subset \X$ be open and bounded with $\mm(\X\setminus \Omega)>0.$ Then  there exists a unique minimizer of
     \begin{equation}\label{eq:eigen min}
        \lambda_1(\Omega)\coloneqq \inf \left\{\frac{\int_\Omega {|D u|^p}  \d \mm}{\int_{\Omega}|u|^p\ d \mm} \ : \ u \in W^{1,p}_0(\Omega)\setminus\{0\}\right\},
    \end{equation}
    which satisfies
    \begin{equation}\label{eq:eigen dir}
            \Delta_{p} u=-\lambda_1(\Omega) u|u|^{p-2}, \quad \text{in $\Omega$}.
    \end{equation}
\end{prop}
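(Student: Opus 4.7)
My plan has three movements. First, I would prove existence via the direct method, imitating the argument sketched for the Poisson problem: restrict to the normalized class $\{u \in W^{1,p}_0(\Omega) : \int_\Omega |u|^p \d\mm = 1\}$, take a minimizing sequence $(u_n)$ for $\int_\Omega |Du|^p \d\mm$, observe that it is automatically bounded in $W^{1,p}_0(\Omega)$, and invoke reflexivity from \eqref{eq:unif conv} to extract a weak subsequential limit $u$. Weak lower semicontinuity of the $p$-energy (cited in the Poisson proof) gives $\int_\Omega |Du|^p \d\mm \le \lambda_1(\Omega)$, while a Rellich-type compact embedding $W^{1,p}_0(\Omega) \hookrightarrow L^p(\Omega)$, available under the standing local Poincar\'{e} hypothesis (together with the locally doubling assumption that becomes natural in this section), transfers the normalization to the limit so that $u$ is a bona fide minimizer.

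Next, I would derive the Euler-Lagrange equation \eqref{eq:eigen dir} from stationarity of the Rayleigh quotient. For any $\phi \in \LIP_{bs}(\Omega)$, the map
\[
s \mapsto \frac{\int_\Omega |D(u + s\phi)|^p \d\mm}{\int_\Omega |u + s\phi|^p \d\mm}
\]
is differentiable at $s=0$, and setting its derivative to zero yields
\[
\int_\Omega |Du|^{p-2} \la \nabla u, \nabla \phi\ra \d\mm = \lambda_1(\Omega) \int_\Omega |u|^{p-2} u \phi \d\mm
\]
for all such $\phi$, which is precisely the weak form of \eqref{eq:eigen dir}. Here \eqref{eq:unif conv} and the bilinearity of $\la \nabla\cdot,\nabla\cdot\ra$ justify differentiation under the integral sign exactly as in Proposition \ref{prop:equivalence p}.

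The main obstacle is uniqueness, which I read as uniqueness of the first eigenspace up to scalar multiplication. I would first reduce to nonnegative minimizers: the identity $|D|u|| = |Du|$ (valid in the infinitesimally Hilbertian setting) together with $\int_\Omega ||u||^p\d\mm = \int_\Omega |u|^p\d\mm$ shows that $|u|$ is also a minimizer whenever $u$ is. Then I would follow the hidden-convexity / Picone approach ported to metric spaces in \cite{MS20}: given two nonnegative normalized minimizers $u, v$, form the interpolation $w_t \coloneqq (tu^p + (1-t)v^p)^{1/p}$ for $t \in (0,1)$ and establish the integrated convexity estimate
\[
\int_\Omega |Dw_t|^p \d\mm \le t \int_\Omega |Du|^p \d\mm + (1-t) \int_\Omega |Dv|^p \d\mm,
\]
with equality only when $u$ and $v$ are proportional. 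Since $\int_\Omega |w_t|^p\d\mm = 1$, the left-hand side is bounded below by $\lambda_1(\Omega)$, while the right-hand side equals $\lambda_1(\Omega)$; equality is forced, hence $u$ and $v$ must be proportional and, after combining with their common normalization, $u = v$. The technical crux is transferring the pointwise Picone-type inequality to weak upper gradients on $(\X,\sfd,\mm)$, and this is precisely where the infinitesimally Hilbertian structure (standing assumption ii)) and the strong $p$-independency of weak upper gradients (standing assumption i)) both enter.
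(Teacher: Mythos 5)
Your proposal follows essentially the same route the paper itself gestures at: existence by the direct method (``as the previous one''), the Euler--Lagrange equation by first variation, and uniqueness via the hidden-convexity/Picone argument, for which the paper simply refers to \cite[Theorem~5.6]{MS20} --- the very reference you cite and whose mechanism you correctly describe. So this is the intended proof, fleshed out.

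Two points you touch on lightly but that deserve emphasis, since they mark genuine delicacies beyond the Poisson case. First, unlike the Poisson functional, the Rayleigh quotient is not weakly lower semicontinuous on its own: you must transfer the constraint $\int_\Omega|u|^p\,\d\mm=1$ to the weak limit, which forces you to invoke a Rellich-type compact embedding $W^{1,p}_0(\Omega)\hookrightarrow L^p(\Omega)$, hence a doubling hypothesis, which Section~\ref{sec:boundary} of the paper does not formally carry (it only posits a local Poincar\'e inequality). You are right to flag this; the cited sources \cite{LMP06,MS20} do assume it. Second, your equality analysis in the hidden-convexity step concludes proportionality of $u$ and $v$, but this conclusion needs $u,v>0$ $\mm$-a.e.\ on a connected $\Omega$, i.e.\ the strong maximum principle (hence Harnack, hence again doubling and Poincar\'e): without it the equality case only gives $u\nabla v=v\nabla u$ where both are positive, which is compatible with $u$ and $v$ living on different pieces. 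Both subtleties are handled in \cite{MS20}, so your proof is sound as long as you read them as part of the delegation to that reference rather than as already established.
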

 Minimizers of \eqref{eq:eigen min} in the metric setting were studied first in \cite{LMP06} deriving Harnack's inequalities, but never discussing the associated equation \eqref{eq:eigen dir}.

\begin{prop}[$p$-Electrostatic potential]\label{prop:electrop}
    Let $\Omega\subset \X$ be open and bounded with $\mm(\X\setminus \Omega)>0$ and let $K\subset \Omega$ be compact.
    Then  there exists a unique minimizer of
     \begin{equation}
        \Cap_p(K,\Omega)\coloneqq \inf \left \{\int_\Omega |D u|^p\d \mm \ : \ u \in W^{1,p}_0(\Omega),\, u\ge 1 \text{ in a neighborhood of $K$}\right\},
    \end{equation}
    called the $p$-electrostatic potential of $K$ in $\Omega$, which satisfies
     \begin{equation}
         \begin{cases}
             \Delta_p u=0, &\text{in $\Omega\setminus K$},\\
              u=1, & \text{in $ K$}.\\
         \end{cases}
    \end{equation}
\end{prop}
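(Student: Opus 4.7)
The proof is a variant of the one sketched for the Poisson equation. The key point is that the admissible class appearing in the definition of $\Cap_p(K,\Omega)$ is not weakly closed because of the ``neighborhood'' condition, so the first step is to replace it by the convex, strongly closed set
\[
\tilde{A} := \{u \in W^{1,p}_0(\Omega) : u \geq 1 \text{ $\mm$-a.e.\ on } K\}.
\]
A minimizing sequence for $\int_\Omega |Du|^p\,\d\mm$ over $\tilde{A}$ is bounded in $W^{1,p}_0(\Omega)$ by Proposition \ref{prop:dir poinc}, hence by \eqref{eq:unif conv} admits a $W^{1,p}$-weakly convergent subsequence. Since $\tilde{A}$ is convex and norm-closed it is weakly closed (Mazur), and weak lower semicontinuity of the $p$-energy produces a minimizer $u \in \tilde{A}$. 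Uniqueness of $u$ follows from strict convexity of $u \mapsto \int |Du|^p\,\d\mm$ on the convex set $\tilde{A}$, itself a consequence of the uniform convexity \eqref{eq:unif conv}.

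Next, one must show that $\Cap_p(K,\Omega) = \inf_{\tilde{A}} \int |Du|^p\,\d\mm$. The inequality $\geq$ is immediate since the original admissible class sits inside $\tilde{A}$. For the reverse, the plan is to use density of Lipschitz functions in $W^{1,p}_0(\Omega)$: given a Lipschitz $v \in \tilde{A}$ and $\eps \in (0,1)$, the function $v_\eps := \min(v/(1-\eps), 1)$ belongs to $W^{1,p}_0(\Omega)$, equals $1$ on the open set $\{v > 1-\eps\} \supset K$ (openness uses Lipschitz continuity of $v$ together with $v \geq 1$ on $K$), and has $p$-energy at most $(1-\eps)^{-p}\int |Dv|^p\,\d\mm$; letting $\eps \to 0$ gives the missing inequality. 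A truncation argument -- replacing $u$ by $\min(u,1) \in \tilde{A}$, whose $p$-energy is no larger by the chain rule for weak upper gradients -- combined with uniqueness then forces $u \leq 1$, so $u = 1$ $\mm$-a.e.\ on $K$.

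Finally, to recover the $p$-Laplace equation in $\Omega \setminus K$, I would test minimality against arbitrary $\phi \in \LIP_{bs}(\Omega \setminus K)$: since $\supp \phi$ is compact in $\Omega \setminus K$, the perturbation $u + t\phi$ still lies in $\tilde{A}$ for every $t \in \R$, so the derivative of $t \mapsto \int_\Omega |D(u + t\phi)|^p\,\d\mm$ at $t = 0$ must vanish, yielding $\Delta_p u = 0$ in $\Omega \setminus K$ in the sense of \eqref{eq:lapl def} (cf.\ Proposition \ref{prop:equivalence p}). The main subtlety of the whole argument is matching the pointwise ``neighborhood of $K$'' constraint with the weakly closed $\mm$-a.e.\ constraint defining $\tilde{A}$; the remaining steps are direct adaptations of the Poisson equation proof.
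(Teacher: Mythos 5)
There is a genuine gap in the relaxation step. Replacing the admissible class
\[
A := \{u \in W^{1,p}_0(\Omega) : u \geq 1 \text{ in a neighborhood of } K\}
\]
by $\tilde A = \{u \in W^{1,p}_0(\Omega) : u \geq 1 \text{ $\mm$-a.e.\ on } K\}$ is too aggressive: the two infima need not coincide. If $K$ has $\mm$-measure zero but positive $p$-capacity (e.g.\ a single point in $\R$ with Lebesgue measure, an $\rcd(0,1)$ space, for any $p>1$), the constraint defining $\tilde A$ is vacuous, so $\tilde A = W^{1,p}_0(\Omega)$ and $\inf_{\tilde A}\int|Du|^p\,\d\mm=0$, while $\Cap_p(K,\Omega)>0$. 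Your reverse-inequality argument fails exactly at the step where you assert $\{v>1-\eps\}\supset K$: membership in $\tilde A$ only gives $v\geq 1$ $\mm$-a.e.\ on $K$, not pointwise, and Lipschitz continuity does not upgrade an $\mm$-a.e.\ bound on a possibly null set $K$ to a pointwise one. A secondary problem is that the Lipschitz approximant of $u\in\tilde A$ furnished by density need not itself belong to $\tilde A$.

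The clean fix is to minimize over the strong $W^{1,p}$-closure $\overline A$ of $A$ rather than over $\tilde A$. One checks easily that $A$ is convex (intersect the two neighborhoods), hence so is $\overline A$; being closed and convex, $\overline A$ is weakly closed, and the direct method together with strict convexity yields a unique minimizer $u\in\overline A$. Since the $p$-energy is norm-continuous, $\inf_{\overline A}=\inf_A=\Cap_p(K,\Omega)$, with no need to identify $\overline A$ explicitly. The remaining steps of your argument then go through unchanged: $\overline A\subset\tilde A$ (strong $W^{1,p}$-convergence gives $\mm$-a.e.\ subsequential convergence) yields $u\geq 1$ $\mm$-a.e.\ on $K$; truncation at $1$ plus uniqueness gives $u\leq 1$ $\mm$-a.e.; and for $\phi\in\LIP_{bs}(\Omega\setminus K)$ the perturbations $u_n+t\phi$ of any approximating $u_n\in A$ stay in $A$ because $\phi$ vanishes near $K$, whence $u+t\phi\in\overline A$ for all $t\in\R$ and the first-variation computation gives $\Delta_p u=0$ in $\Omega\setminus K$. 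Characterizing $\overline A$ as the set of functions whose quasi-continuous representative is $\geq 1$ $p$-quasi-everywhere on $K$ is precisely where the capacity theory of \cite{BB} enters, but it is not needed for the proposition as stated.
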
	
The quantity $\Cap_p(K,\Omega)$, called \textit{$p$-capacity}, has been widely studied  and plays a fundamental role in theory of Sobolev spaces in metric setting (see \cite[Section 1.4]{BB}).

\begin{remark}
    The Poincaré inequality for functions with zero boundary condition in Proposition \ref{prop:dir poinc} (and thus all the previous existence results) holds under the weaker assumption that $\X\setminus \Omega$ has positive $p$-capacity (see \cite[Corolalry 5.54]{BB}). 
\end{remark}

\medskip

We pass now to  equations with  Neumann boundary conditions. 
\begin{definition}[$p$-Laplacian with Neumann boundary conditions]\label{def:neu}
   Fix $f \in L^1(\Omega)$. We say $u\in W^{1,p}(\Omega)$ satisfies 
    \begin{equation}
        \begin{cases}
         \Delta_p u=f, & \text{in $\Omega$}\\
         \partial_\nu u=0, &  \text{in $\partial \Omega$},
        \end{cases}
    \end{equation}
    provided
    \begin{equation}
        -\int_\Omega |D u|^{p-2}\la \nabla u, \nabla \phi\ra \d \mm = \int_\Omega f \phi \d \mm, \quad \text{ for all $\phi \in W^{1,p}\cap L^\infty(\Omega)$}.
    \end{equation}
\end{definition}
Note that expression `$ \partial_\nu u=0$ in $\partial \Omega$' is just formal as we are not defining the normal derivative.

To obtain existence of Neumann problem we replace \eqref{eq:poinc dir} with the following Poincaré inequality:
 \begin{equation}\label{eq:neu poinc}\tag{N}
     \int_\Omega |u-u_\Omega|^pd \mm\le C_{p,\Omega} \int |D u|^p\d \mm, \quad \forall u \in W^{1,p}(\Omega),
 \end{equation}
 where $u_\Omega\coloneqq \fint_\Omega u\d \mm$ and $C_{p,\Omega}>0$ is a constant depending only on $\Omega$ and $p.$
Sufficient conditions for inequality \eqref{eq:neu poinc} to hold are:
\begin{enumerate}
    \item $\Omega=\X$ with $\diam(\X)<\infty$ (see \cite[Theorem 2.4]{BV24});
    \item $\Xdm$ is locally doubling (see Definition \ref{def:doubling}) and $\Omega$ is a bounded uniform domain, that  is an open set connected by curves that are sufficiently away from the boundary (see \cite{unifpoinc}).
\end{enumerate}
Thanks to \cite{rajala2021approximation} we know that on doubling quasi-convex metric spaces that there are plenty of uniform domains.
 
The results stated above for the Dirichlet case hold also in the Neumann case. The argument is essentially the same, but using \eqref{eq:neu poinc} instead of \eqref{eq:poinc dir} to apply the direct method.
\begin{prop}[Poisson equation with Neumann boundary conditions]\label{prop:poisson neum}
    Suppose that $\Omega$ satisfies \eqref{eq:neu poinc} Then for all $f \in L^{p'}(\Omega)$ with $\int_{\Omega} f \d \mm=0$ and $p'\coloneqq p/(p-1)$ there exists a unique solution to 
    \begin{equation}
        \begin{cases}
            \Delta_{p} u=f,& \text{in $\Omega$},\\
             \partial_\nu u=0, &  \text{in $\partial \Omega$},\\
             \int_\Omega u \d\mm=0.
        \end{cases}
    \end{equation}
     Moreover this solution coincides with the unique minimum of 
    \begin{equation}
        \inf \left\{\int_\Omega \frac{|D u|^p}p-f u \d \mm \ : \ u \in W^{1,p}(\Omega),\, \int_\Omega u \d \mm=0\right\}.
    \end{equation}
\end{prop}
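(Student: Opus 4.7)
The plan is to mirror the argument sketched for the Dirichlet Poisson problem, replacing \eqref{eq:poinc dir} with the Neumann-type inequality \eqref{eq:neu poinc}. First I would introduce the closed affine subspace
\[
V\coloneqq \Big\{u\in W^{1,p}(\Omega)\,:\, \int_\Omega u\,\d\mm=0\Big\}\subset W^{1,p}(\Omega),
\]
on which the functional $F_p(u)\coloneqq \int_\Omega \tfrac{|Du|^p}{p}-fu\,\d\mm$ is well defined by H\"older's inequality (since $f\in L^{p'}$ and $u\in L^p$). Applying \eqref{eq:neu poinc} to any $u\in V$ (for which $u_\Omega=0$) and then using H\"older and Young's inequalities on the linear term yields the coercivity bound $F_p(u)\ge c\|Du\|_{L^p}^p-C'$ for suitable constants $c,C'>0$.

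With coercivity in place, the direct method works exactly as in the Dirichlet case: a minimizing sequence is bounded in $W^{1,p}(\Omega)$, and \eqref{eq:unif conv} produces a weakly convergent subsequence whose limit lies in $V$ (closed under weak convergence); lower semicontinuity of the $p$-energy under $W^{1,p}$-weak convergence (see \cite[Prop.\ 2.1.19]{GP20}) combined with the weak continuity of the linear functional $u\mapsto \int_\Omega fu\,\d\mm$ delivers a minimizer. Uniqueness is immediate from the strict convexity granted by the uniform convexity of $W^{1,p}(\Omega)$ in \eqref{eq:unif conv}.

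To derive the Euler-Lagrange equation I would differentiate $t\mapsto F_p(u+t\phi)$ at $t=0$, but now only for perturbations $\phi\in W^{1,p}\cap L^\infty(\Omega)$ that preserve the zero-mean constraint, i.e.\ with $\int_\Omega \phi\,\d\mm=0$. This gives
\[
-\int_\Omega |Du|^{p-2}\la \nabla u,\nabla \phi\ra\,\d\mm=\int_\Omega f\phi\,\d\mm,
\]
which matches Definition~\ref{def:neu} only on zero-mean tests. The main (and really the only) obstacle is to then upgrade this to arbitrary $\phi\in W^{1,p}\cap L^\infty(\Omega)$: I would handle it by writing $\phi=(\phi-\phi_\Omega)+\phi_\Omega$, noting that the zero-mean summand is an admissible test, while the constant $\phi_\Omega$ contributes nothing either to the left-hand side (its gradient vanishes) or to the right-hand side, thanks precisely to the compatibility hypothesis $\int_\Omega f\,\d\mm=0$. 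This both closes the proof and makes explicit the familiar role of $\int_\Omega f\,\d\mm=0$ as the necessary solvability condition for a Neumann Poisson problem.
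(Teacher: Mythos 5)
Your proof follows the same direct-method approach the paper indicates for the Neumann case (mirroring the Dirichlet sketch with \eqref{eq:neu poinc} replacing \eqref{eq:poinc dir}), and it is correct. You also usefully spell out a detail the paper leaves implicit: the first-variation identity initially holds only for zero-mean test functions, and the compatibility condition $\int_\Omega f\,\d\mm=0$ is precisely what allows extension to all $\phi\in W^{1,p}\cap L^\infty(\Omega)$ via the decomposition $\phi=(\phi-\phi_\Omega)+\phi_\Omega$, matching Definition~\ref{def:neu}.
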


\begin{prop}[Neumann eigenvalue problem]
       Suppose that $\Omega$ satisfies \eqref{eq:neu poinc}. Then  there exists a unique minimizer of
     \begin{equation}
          \lambda_1^N(\Omega)\coloneqq \inf \left\{\frac{\int_\Omega {|D u|^p}  \d \mm}{\int_{\Omega}|u|^p\ d \mm} \ : \ u \in W^{1,p}(\Omega)\setminus\{0\},\,\, \int_\Omega u|u|^{p-2} \d \mm=0\right\},
    \end{equation}
    which satisfies
    \begin{equation}
    \begin{cases}
          \Delta_{p} u=-\lambda_1^N(\Omega) u|u|^{p-2}, & \text{in $\Omega$}\\
          \partial_\nu u=0, &  \text{in $\partial \Omega$.}
    \end{cases}
    \end{equation}
\end{prop}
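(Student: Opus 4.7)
The plan is to follow the variational strategy of the Dirichlet eigenvalue problem, with \eqref{eq:neu poinc} in place of \eqref{eq:poinc dir}, adapting it to handle the nonlinear constraint $\int_\Omega u|u|^{p-2}\d\mm=0$. First I would pick a minimizing sequence $u_n\in W^{1,p}(\Omega)\setminus\{0\}$ satisfying the constraint and normalize it so that $\int_\Omega|u_n|^p\d\mm=1$. Then $\|u_n\|_{L^p}=1$ and $\|\nabla u_n\|_{L^p}^p\to \lambda_1^N(\Omega)$, so $(u_n)$ is bounded in $W^{1,p}(\Omega)$, and by the reflexivity \eqref{eq:unif conv} I can extract a weakly convergent subsequence $u_n\weakto u$ in $W^{1,p}(\Omega)$. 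Lower semicontinuity of the $p$-energy under weak $W^{1,p}$-convergence (\cite[Prop.\ 2.1.19]{GP20}) then gives $\|\nabla u\|_{L^p}^p\le \lambda_1^N(\Omega)$.

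The main obstacle is passing to the limit in the nonlinear constraint and in the normalization, both of which require some form of strong $L^p$-convergence. For this I would invoke a compact embedding $W^{1,p}(\Omega)\hookrightarrow L^p(\Omega)$, which is available in both of the settings in which \eqref{eq:neu poinc} is stated to hold ($\Omega=\X$ with $\diam(\X)<\infty$ under the implicit doubling assumption, and bounded uniform domains in locally doubling Poincaré spaces) via standard Rellich--Kondrachov-type results in the metric setting. Granted this compactness, the map $v\mapsto \int_\Omega v|v|^{p-2}\d\mm$ is continuous on $L^p$-bounded sets, so that $\|u\|_{L^p}=1$ (hence $u\ne 0$) and $\int_\Omega u|u|^{p-2}\d\mm=0$. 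Therefore $u$ is an admissible competitor and, by the reverse inequality, $\|\nabla u\|_{L^p}^p=\lambda_1^N(\Omega)$.

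For the Euler--Lagrange equation I would perform a constrained first variation. Given $\phi\in W^{1,p}\cap L^\infty(\Omega)$, the implicit function theorem produces $\alpha\colon(-\varepsilon,\varepsilon)\to\R$ with $\alpha(0)=0$ and $\alpha'(0)=\bigl(\int_\Omega|u|^{p-2}\d\mm\bigr)^{-1}\int_\Omega|u|^{p-2}\phi\d\mm$ such that $u_t\coloneqq u+t(\phi-\alpha(t))$ satisfies $\int_\Omega u_t|u_t|^{p-2}\d\mm=0$ for small $t$. Differentiating the Rayleigh quotient at $t=0$ and using that constants are annihilated by $\nabla$ together with the constraint $\int_\Omega u|u|^{p-2}\d\mm=0$ to kill the terms involving $\alpha'(0)$, I obtain
\begin{equation*}
    \int_\Omega |\nabla u|^{p-2}\la\nabla u,\nabla\phi\ra\d\mm = \lambda_1^N(\Omega)\int_\Omega u|u|^{p-2}\phi\d\mm,
\end{equation*}
which is exactly the equation in the sense of Definition \ref{def:neu}. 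Uniqueness, understood as in the Dirichlet case up to sign and scalar multiplication, is the subtle nonlinear point; I would adapt the convexity argument of \cite[Theorem 5.6]{MS20} by working with $|u|^p$ in place of $u$, the constraint $\int u|u|^{p-2}\d\mm=0$ now playing the role of orthogonality to constants that selects the first nontrivial eigenfunction.
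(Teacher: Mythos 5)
Your overall strategy matches the paper's intended argument exactly: the paper gives no separate proof, merely stating that the Neumann case follows the Dirichlet one ``with \eqref{eq:neu poinc} instead of \eqref{eq:poinc dir}'' and delegating the uniqueness to \cite[Theorem 5.6]{MS20}. You reproduce this (direct method, weak lower semicontinuity, a constrained first variation for the Euler--Lagrange equation, uniqueness via the hidden-convexity argument of \cite{MS20}), so the route is the same. Your explicit flagging that a compact embedding $W^{1,p}(\Omega)\hookrightarrow L^p(\Omega)$ is required to pass to the limit in both the nonlinear constraint $\int_\Omega u|u|^{p-2}\d\mm=0$ and the normalization $\|u\|_{L^p}=1$ is a genuine improvement over the paper's terseness: unlike the Poisson problem, weak lower semicontinuity alone is not enough here, and the paper never says where the compactness comes from. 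This Rellich--Kondrachov step does need the locally doubling assumption (or some substitute), which in this section of the paper is not part of the standing hypotheses — worth noting.

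Two small technical points in your Euler--Lagrange step deserve attention. First, as written $u_t=u+t(\phi-\alpha(t))$ cannot be produced by the implicit function theorem: the $\alpha$-derivative of the constraint $t\mapsto\int u_t|u_t|^{p-2}\d\mm$ vanishes at $t=0$ because of the extra factor $t$. Your formula $\alpha'(0)=\bigl(\int_\Omega|u|^{p-2}\d\mm\bigr)^{-1}\int_\Omega|u|^{p-2}\phi\,\d\mm$ is the one obtained from the perturbation $u_t=u+t\phi-\alpha(t)$, where the IFT does apply, so this is presumably the parameterization you intended; fortunately both versions give the same first variation since $\nabla\alpha=0$ and the constraint $\int_\Omega u|u|^{p-2}\d\mm=0$ annihilates the $\alpha'(0)$-term in the derivative of $\int|u_t|^p$. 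Second, the IFT requires $\int_\Omega|u|^{p-2}\d\mm<\infty$, which is automatic for $p\ge 2$ but is a real issue for $p\in(1,2)$ near the zero set of $u$; here one should either invoke that minimizers vanish on a null set (so the constraint functional is still differentiable in a suitable sense) or replace the IFT by a direct continuity/monotonicity argument producing a constant $c(t)$ with $c(t)=O(t)$ such that $u+t\phi-c(t)$ satisfies the constraint, which suffices since the $c$-contribution drops out of the first variation anyway. Neither issue changes the final equation, but they should be addressed for a complete proof.
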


\subsection{Interior and boundary regularity of solutions}\label{sec:holder}
A standard method in the Euclidean setting to obtain interior H\"older continuity estimates for elliptic equations in divergence form is the Moser's iteration method. Ultimately this boils down to several applications of the Sobolev inequality to get better and better integrability estimates (see e.g.\ \cite[Section 8.5]{GilbargTrudinger}).  It is now well understood that a \textit{doubling assumption} combined with a \textit{Poincaré inequality} is sufficient to obtain a \textit{Sobolev inequality}. This observation goes back to  \cite{Saloff92}  and was generalized to the metric setting in \cite{HK00}.

 Let us start recalling the notion of a locally doubling metric measure space.
\begin{definition}[Locally doubling condition]\label{def:doubling}
    	$\Xdm$ is said \emph{locally doubling} if for all $R>0$ there exists a constant   $C_{D}(R)>0$ such that
			\[
			\mm\big(B_{2r}(x)\big)\leq C_D(R)\,\mm\big(B_r(x)\big)\quad\text{ for every }0<r<R\text{ and }x\in\X.
			\]
\end{definition}
 To state the Sobolev inequality  we need a notion of `dimension', which allows to define the Sobolev conjugate exponent $p^*.$
\begin{prop}[Doubling dimension]
 Suppose $\Xdm$ is locally doubling. Then there exists a constant $s>1$ and for all $R>0$ there exists a constant $c_{R}>0$ such that 
 \begin{equation}\label{eq:d dim}
     \frac{\mm(B_{r_1}(x))}{\mm(B_{r_2}(x))}\ge  c_R\left(\frac{r_1}{r_2}\right)^s, \quad \text{for every $0<r_1<r_2<R$}.
 \end{equation}
\end{prop}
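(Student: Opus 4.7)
The plan is the standard dyadic chaining of the doubling condition. Fix $R>0$ and $0<r_1<r_2<R$, and take the least integer $k\ge 0$ with $2^k r_1\ge r_2$, so that $k\le \log_2(r_2/r_1)+1$. The crucial observation is that each intermediate radius $2^{j-1}r_1$ with $j=1,\dots,k$ satisfies $2^{j-1}r_1\le 2^{k-1}r_1<r_2<R$, so the local doubling hypothesis applies at all of these scales with the common constant $C_D(R)$.

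Iterating the doubling inequality $k$ times yields $\mm(B_{2^k r_1}(x))\le C_D(R)^k\,\mm(B_{r_1}(x))$, and the monotonicity of $r\mapsto \mm(B_r(x))$ together with $2^k r_1\ge r_2$ gives
\begin{equation*}
\mm(B_{r_2}(x))\le \mm(B_{2^k r_1}(x))\le C_D(R)^k\,\mm(B_{r_1}(x)).
\end{equation*}
Since $k\le \log_2(r_2/r_1)+1$, one has $C_D(R)^k\le C_D(R)\,(r_2/r_1)^{\log_2 C_D(R)}$, and rearranging produces
\begin{equation*}
\frac{\mm(B_{r_1}(x))}{\mm(B_{r_2}(x))}\ge \frac{1}{C_D(R)}\left(\frac{r_1}{r_2}\right)^{\log_2 C_D(R)}.
\end{equation*}
Setting $c_R\coloneqq 1/C_D(R)$ and $s\coloneqq \log_2 C_D(R)$ therefore gives the claimed inequality. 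The constraint $s>1$ is cheap, since one may always replace $C_D(R)$ by any larger constant (the inequality is monotone in it) so as to force $C_D(R)>2$.

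The only conceptual wrinkle I anticipate is the parsing of the quantifiers: as stated the proposition seems to promise a single $s$ working for every $R$, whereas the argument above produces $s=s(R)=\log_2 C_D(R)$. Under the bare locally doubling hypothesis a universal exponent cannot be expected in general, so I read the statement as implicitly allowing $s$ to depend on $R$ (if $C_D$ can be chosen independently of $R$, i.e.\ in the globally doubling case, then $s$ becomes uniform). No other step is delicate: the whole proof is a geometric-series estimate driven by the dyadic choice of $k$.
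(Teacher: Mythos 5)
Your dyadic chaining argument is correct and is the standard route to such estimates; the paper does not display a proof, so there is nothing to compare against step by step. The issue is the one you yourself flag at the end: your argument produces an exponent $s = \log_2 C_D(R)$ that depends on $R$, whereas the statement asserts $\exists\, s > 1$ \emph{before} $\forall R > 0$. You resolve the tension by re-reading the statement as implicitly allowing $s = s(R)$. That reading is not necessary, and in fact the proposition is true exactly as written: a single $s$ does work for all $R$. This is the genuine gap in the proof.

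The fix is to run your argument at a fixed reference scale and then patch the large scales by a cruder bound. Take $s := \max\bigl(\log_2 C_D(1), 1+\delta\bigr)$ for some $\delta > 0$. For $R \le 1$ your chain already gives the estimate with $c_R = C_D(1)^{-1}$. For $R > 1$ split into three regimes. If $r_2 \le 1$, the case $R \le 1$ applies verbatim. If $1 < r_1 < r_2 < R$, then $r_1/r_2 > 1/R$ is bounded below, while the dyadic chain with $C_D(R)$ and at most $\lceil\log_2 R\rceil+1$ steps gives $\mm(B_{r_1})/\mm(B_{r_2}) \ge C_D(R)^{-\log_2 R - 1} =: \gamma_R$; since $(r_1/r_2)^s < 1$, this trivially dominates $\gamma_R (r_1/r_2)^s$. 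If $r_1 \le 1 < r_2 < R$, write the ratio as $\frac{\mm(B_{r_1})}{\mm(B_1)}\cdot\frac{\mm(B_1)}{\mm(B_{r_2})}$; the first factor is $\ge C_D(1)^{-1} r_1^s$ by the small-scale case, the second is $\ge \gamma_R$ as above, and $(r_1/r_2)^s \le r_1^s$ because $r_2 > 1$. Taking $c_R := \gamma_R / C_D(1)$ covers all three regimes. The conceptual point you should internalize is that the exponent $s$ only ever matters in the regime $r_1/r_2 \to 0$ with both radii small, because for radii bounded away from $0$ (and below $R$) the ratio $r_1/r_2$ is bounded below by $1/R$ and any $s$ works after shrinking $c_R$; so the small-scale doubling constant $C_D(1)$ alone fixes $s$, and $R$-dependence gets absorbed entirely into $c_R$.
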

Note that if \eqref{eq:d dim} holds for some $s$ then it holds for all $s'>s$ and so in a sense $s$ might be understood as an  `upper bound on the dimension' of $\Xdm.$
\begin{theorem}[{Local Sobolev inequality, \cite[Theorem 5.1]{HK00}}]
   Let $\Xdm$ be  locally doubling and supporting a local Poincaré inequality. Fix $s>1$  satisfying \eqref{eq:d dim}, fix $p\in(1,s)$ and set $p^*\coloneqq \frac{ps}{s-p}$. Then for all $B_r(x)\subset \X$, $r\le R$ it holds
    \begin{equation}
        \left(\fint_{B_r(x)} |u|^{p^*}\d \mm\right)^\frac{p}{p^*}\le C \fint_{B_{2\lambda r}(x)} r^p|D u|^p+|u|^p\d \mm, \quad \forall u \in W^{1,p}_{\loc}(\X),
    \end{equation}
    where $C$ is a constant depending only on $p,\lambda,C_D,C_P,s$. The same holds for $p=n$ and with $p^*$ arbitrarily chosen. 
\end{theorem}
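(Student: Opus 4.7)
The plan is to combine local doubling with the local Poincaré inequality along the classical Hajłasz–Koskela lines: first derive a Lebesgue-differentiation pointwise estimate from the $(1,p)$-Poincaré information, then upgrade it to a strong $L^{p^*}$ Sobolev–Poincaré bound via a Maz'ya truncation on superlevel sets, and finally absorb the mean value to obtain the stated inequality involving the extra $|u|^p$ term.

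For the pointwise step I fix $B \coloneqq B_r(x_0)$ and, for $\mm$-a.e.\ $x \in B$, consider the dyadic chain $B_k \coloneqq B(x, 2^{-k+1}r)$, telescoping
\[
u(x) - u_B \;=\; (u_{B_0} - u_B) + \sum_{k \ge 0}(u_{B_{k+1}} - u_{B_k}).
\]
Controlling each term via the $(1,p)$-Poincaré inequality on $\lambda B_k$ and exchanging $\mm(B_k)$ with $\mm(B_{k+1})$ through the doubling condition yields
\[
|u(x) - u_B| \;\le\; C\,r\,\bigl(M_{2\lambda r}(|Du|^p)(x)\bigr)^{1/p},
\]
where $M_{2\lambda r}$ is the Hardy–Littlewood maximal operator restricted to balls contained in $B_{2\lambda r}(x_0)$.

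To promote this into an $L^{p^*}$ bound I apply a Maz'ya truncation. For each dyadic level $t = 2^j$, $j \in \mathbb{Z}$, set $u_t \coloneqq \min(\max(|u - u_B| - t, 0), t)$, so that $|Du_t|$ is supported in the annular level set $\{t < |u - u_B| \le 2t\}$. The pointwise estimate applied to $u_t$, together with the weak-$(1,1)$ bound for $M_{2\lambda r}$ (valid on doubling spaces via a Vitali covering argument), gives
\[
t^p\,\mm\bigl(\{|u - u_B|>2t\}\cap B\bigr) \;\le\; C\,r^p \int_{\{t<|u-u_B|\le 2t\}}|Du|^p\,\d\mm.
\]
Multiplying by $t^{p^*-p}$ and summing the geometric series over $j \in \mathbb{Z}$, which converges precisely because $p < s$ forces $p^* > p$, one deduces
\[
\Bigl(\fint_B |u - u_B|^{p^*}\,\d\mm\Bigr)^{p/p^*} \;\le\; C\,r^p \fint_{B_{2\lambda r}(x_0)}|Du|^p\,\d\mm.
\]

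The conclusion follows by writing $u = (u-u_B) + u_B$, applying the bound above to the first summand, and controlling $|u_B|^{p^*} \le \fint_B |u|^{p^*}\,\d\mm$ by Jensen's inequality. The main obstacle is the truncation step: one must check that $u_t$ inherits the pointwise estimate with $|Du|$ replaced by $|Du|\chi_{\{t<|u-u_B|\le 2t\}}$, which rests on the locality of minimal weak upper gradients on level sets, and one must keep the doubling constants uniform so that the dyadic summation produces a bound depending only on $R$. The endpoint $p = s$ with arbitrary $p^*$ reduces to the generic case after noting that the doubling dimension inequality remains valid with any $s' > s$.
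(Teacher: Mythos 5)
The paper does not reproduce a proof of this theorem; it cites \cite[Theorem 5.1]{HK00} directly, so your proposal stands alone. Its broad plan — dyadic chaining via the $(1,p)$-Poincaré inequality, a Maz'ya truncation, then absorbing the mean — is indeed the Hajłasz–Koskela strategy, but two of the key steps do not close as you have written them.

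The chaining bound $|u(x)-u_B|\le C\,r\,(M_{2\lambda r}(|Du|^p)(x))^{1/p}$ produces, after the weak-$(1,1)$ maximal inequality, only a weak $(p,p)$ estimate; the dimension $s$ never appears in it, so there is no mechanism to gain the exponent $p^*$. Your subsequent ``multiply by $t^{p^*-p}$ and sum the geometric series'' does not repair this: the annular integrals $\int_{\{2^j<|u-u_B|\le 2^{j+1}\}}|Du|^p\,\d\mm$ are not geometrically decaying, and using $2^{j(p^*-p)}\le |u-u_B|^{p^*-p}$ on the level set only yields $\int_B|u-u_B|^{p^*}\,\d\mm\lesssim r^p\int_B|u-u_B|^{p^*-p}|Du|^p\,\d\mm$, which cannot be closed to $\big(\int|Du|^p\,\d\mm\big)^{p^*/p}$ by Hölder without assuming $|Du|\in L^{p^*}$. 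The dimension $s$ must enter the chain itself: for the initial segment of the telescope one uses the reverse-doubling inequality \eqref{eq:d dim} to estimate $\fint_{\lambda B_k}|Du|\,\d\mm\le C\,2^{ks/p}\big(\fint_{\lambda B_0}|Du|^p\,\d\mm\big)^{1/p}$, and optimising the splitting point between this and the maximal-function tail yields a pointwise bound of the form $|u-u_B|\le Cr\big(\fint|Du|^p\big)^{1/s}M(|Du|^p)^{1/p-1/s}$, whose layer-cake integral is a genuine weak $(p,p^*)$ Sobolev–Poincaré inequality. Maz'ya's truncation then upgrades this weak bound to the strong one not by a geometric series but by superadditivity: applying the weak estimate to each truncation $u_t$ gives $t^{p^*}\,\mm(\{|u-u_B|>2t\}\cap B)/\mm(B)\le C\big(r^p\fint_{2\lambda B}|Du_t|^p\,\d\mm\big)^{p^*/p}$, and the sum over $j$ reassembles into $\big(\fint_{2\lambda B}|Du|^p\,\d\mm\big)^{p^*/p}$ because $\sum_j a_j^{p^*/p}\le\big(\sum_j a_j\big)^{p^*/p}$ for $p^*/p>1$. (One must also control the means $(u_t)_B$, a technical point you flag but do not resolve.) Finally, a small slip: the bound $|u_B|^{p^*}\le\fint_B|u|^{p^*}\,\d\mm$ is circular for your purpose; instead use $|u_B|\le\big(\fint_B|u|^p\,\d\mm\big)^{1/p}$ and raise the resulting inequality to the power $p/p^*$ to land on the stated form.
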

In the case $p>s$ we have also a counterpart of the Morrey's embedding.
\begin{theorem}[{Morrey's embedding, \cite[Theorem 5.1]{HK00}}]\label{thm:morreyà}
      Let $\Xdm$ be  locally doubling and supporting a local Poincaré inequality. Fix $s>1$  satisfying \eqref{eq:d dim} and let $p>s$. Then any function $u \in W^{1,p}_{\loc}(\X)$ has a locally H\"older continuous representative. 
\end{theorem}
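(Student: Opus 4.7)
My plan is to reduce everything to a telescoping estimate of $|u(x)-u(y)|$ along a dyadic chain of balls, exactly as in the classical Euclidean Morrey embedding, using the local Poincaré inequality together with the doubling dimension bound \eqref{eq:d dim}. Fix $x_0\in\X$ and $R_0>0$ small enough that all balls under consideration are contained in a fixed ball $B_R(x_0)$ with $R$ small compared to the scale on which the local doubling constant $C_D(R)$ and local Poincaré constant $C_P(R)$ are controlled.

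First, for $x\in B_{R_0}(x_0)$ and $0<r<R_0$, set $r_k\coloneqq 2^{-k}r$ and $B_k\coloneqq B_{r_k}(x)$. The standard argument is: combine the local Poincaré inequality with H\"older's inequality to obtain
\[
\fint_{B_k}|u-u_{B_k}|\,\d\mm \le C\,r_k\left(\fint_{B_{\lambda r_k}}|Du|^p\,\d\mm\right)^{1/p},
\]
valid for Sobolev $u$ (assuming we can extend Definition \ref{def:poinc} from Lipschitz functions to $W^{1,p}_\loc$ with $|Du|$ on the right, see below). Now from doubling dimension \eqref{eq:d dim} we read off
\[
\frac{1}{\mm(B_{\lambda r_k})}\le C_R\,(\lambda r_k)^{-s}\,\mm(B_R)^{-1}\cdot R^s,
\]
so that
\[
|u_{B_{k+1}}-u_{B_k}|\le 2C\,r_k\left(\fint_{B_{\lambda r_k}}|Du|^p\d\mm\right)^{1/p}\le C'\,r_k^{\,1-s/p}\,\|Du\|_{L^p(B_R)}.
\]
Since $p>s$ the exponent $1-s/p$ is strictly positive, the geometric series $\sum_k r_k^{1-s/p}$ converges, and summing gives $|u_{B_k}-u_{B_0}|\le C''r^{1-s/p}\|Du\|_{L^p(B_R)}$. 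By Lebesgue differentiation (which holds in any locally doubling space), $u_{B_k}\to u(x)$ at $\mm$-a.e.\ $x$, so one obtains $|u(x)-u_{B_r(x)}|\le C''r^{1-s/p}\|Du\|_{L^p(B_R)}$ at every Lebesgue point. For two Lebesgue points $x,y$ with $\sfd(x,y)=\rho$, compare both to the common ball $B_{2\rho}(x)$, apply the previous estimate to each, and use doubling to swap $u_{B_{2\rho}(x)}$ and $u_{B_{2\rho}(y)}$. This produces $|u(x)-u(y)|\le C\,\sfd(x,y)^{1-s/p}\|Du\|_{L^p(B_R)}$, which is a H\"older estimate on the set of Lebesgue points; the H\"older continuous representative is then defined by continuous extension.

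The main obstacle is that Definition \ref{def:poinc} is stated for $f\in\LIP_\loc(\X)$ with $\lip f$ on the right-hand side, whereas the argument needs a Poincar\'e-type inequality for $u\in W^{1,p}_\loc$ with the minimal weak upper gradient $|Du|$ on the right. This upgrade is standard in the axiomatic setting: one approximates $u$ in $W^{1,p}_\loc$ by locally Lipschitz functions (granted by the locally doubling plus Poincar\'e assumption, via \cite{Cheeger99} or via Lipschitz truncation), uses $|Du|\le \lip u$ $\mm$-a.e.\ for Lipschitz $u$, and passes to the limit with the help of doubling. Granting this standard extension, and the fact that doubling gives a local Vitali covering property hence Lebesgue differentiation, the argument above goes through with constants depending only on $p$, $s$, $\lambda$, $C_D$, $C_P$, and the local scale $R$.
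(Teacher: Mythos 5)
Your argument is correct and reproduces the standard Morrey-type chaining proof, which is essentially what the cited reference \cite[Theorem 5.1]{HK00} does in the range $p>s$; the paper itself simply cites that result without proof. Your identification of the one genuine issue — upgrading Definition \ref{def:poinc} from $\lip f$ for Lipschitz $f$ to $|Du|$ for $u\in W^{1,p}_\loc$ — is exactly the right point to flag, and the resolution you sketch (density of Lipschitz functions and $|Du|\le\lip u$, valid under doubling plus Poincaré by \cite{Cheeger99}) is the standard and correct one; the only remaining bookkeeping left implicit is that the volume-lower-bound in \eqref{eq:d dim} is for concentric balls, so when comparing $\mm(B_{\lambda r_k}(x))$ to $\mm(B_R(x_0))$ one should pass through a concentric reference ball such as $B_R(x)$ and absorb the discrepancy into the doubling constant.
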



From now on throughout this subsection we will assume  that $\Xdm$ is locally doubling and supports a local Poincaré inequality. We also fix $s>1$  satisfying \eqref{eq:d dim}.
We can now state a version of the Harnack's inequalities.
\begin{theorem}[Harnack's inequality for subsolutions]\label{thm:sup harnack}
    Fix $p\in(1,s]$ and $q>s/p.$ Suppose that 
    $$\Delta_pu\ge f+g|u|^{p-1} \quad \text{in $B_1(x)\subset  \X$,} \quad    \mm(B_1(x))=1,$$
    for some $f,g\in L^q(B_1(x))$, with $\|g\|_{L^q(B_1(x))}\le c_0.$ 
    Then  it holds
		\begin{equation}\label{eq:sup estimate}
			\esssup_{B_{\frac 12}(x)} u \le C_1 \left(\|u^+\|_{L^1(B_1(x))}+\|f\|_{L^q(B_1(x))}^\frac1{p-1}\right),
		\end{equation}
			where  $C_1$ is a constant depending only on $p,q,\lambda,C_D,C_P,s,c_0$. 
\end{theorem}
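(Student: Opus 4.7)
The plan is to carry out the classical De Giorgi–Nash–Moser iteration, adapted to the metric setting using the local Sobolev inequality recalled just above. First I would normalize away the inhomogeneity: setting $k := \|f\|_{L^q(B_1(x))}^{1/(p-1)}$ and $v := u^+ + k \ge k > 0$, one has $|f| \le (|f|/k^{p-1})\,v^{p-1}$ with $\||f|/k^{p-1}\|_{L^q(B_1(x))} \le 1$, so it suffices to prove $\esssup_{B_{1/2}(x)} v \le C\|v\|_{L^1(B_1(x))}$ for a subsolution of $\Delta_p v \ge -\tilde g\, v^{p-1}$ with $\|\tilde g\|_{L^q(B_1(x))} \le 1 + c_0$. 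The shift by $k$ also keeps $v$ bounded away from zero, which later legitimizes the chain rule for $v^\beta$.

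The next step is the Caccioppoli-type energy bound. For a cutoff $\eta \in \LIP_{bs}(B_1(x))$ with $0\le\eta\le 1$ and a parameter $\beta > 0$, I would test the subsolution inequality from \eqref{eq:lapl def} against
\[
\phi = \eta^p\,(v \wedge M)^{\beta} \in \LIP_{bs}(B_1(x)),
\]
where the truncation at level $M$ keeps $\phi$ Lipschitz with bounded support. Expanding $\nabla\phi$, absorbing the cross term $\eta^{p-1}(v\wedge M)^\beta |Dv|^{p-2}\la\nabla v,\nabla\eta\ra$ via Young's inequality, and sending $M\to\infty$ by monotone convergence, one obtains, after the substitution $w := v^{(\beta + p - 1)/p}$, the estimate
\[
\int |D(\eta w)|^p\,\d\mm \le C(\beta)\int w^p\big(|D\eta|^p + \eta^p\,\tilde g\big)\,\d\mm.
\]

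Combining with the local Sobolev inequality on $\eta w$ yields the reverse-H\"older-type bound
\[
\|v\|_{L^{\chi(\beta+p-1)}(B_r(x))} \le C(\beta, r, R)^{1/(\beta+p-1)}\,\|v\|_{L^{\beta+p-1}(B_R(x))},\qquad \chi := p^*/p > 1,
\]
for concentric balls $B_r \subset B_R \subset B_1(x)$. The $\tilde g$-term is handled by H\"older's inequality, where the strict subcriticality $q > s/p$ provides the necessary room in the Sobolev exponent. Iterating along $\beta_j + p - 1 = \chi^j(\beta_0 + p - 1)$ with radii $r_j \downarrow 1/2$ and taking the infinite product of constants (which converges since $\sum_j \chi^{-j} < \infty$ and the geometric factors from $r_j - r_{j+1}$ accumulate only polynomially) produces $\esssup_{B_{1/2}(x)} v \le C\|v\|_{L^{\beta_0 + p}(B_1(x))}$. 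To descend from $L^{\beta_0+p}$ to $L^1$ on the right-hand side I would invoke the standard interpolation trick: run the iteration across a nested family of intermediate balls and absorb a fraction of the supremum norm back into the left-hand side via Young's inequality.

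The main obstacle is the Caccioppoli step in the non-smooth setting: one must justify that $\eta^p(v\wedge M)^\beta$ is genuinely admissible and that the metric chain rule applies to $v\mapsto v^\beta$. The shift $v \ge k > 0$ and the truncation at $M$ are both essential to avoid singularities, particularly when $\beta < 1$. A secondary technical point is to track the dependence of $C(\beta)$ on $\beta$ carefully so that the iterated product stays finite; this is standard but requires attention at the borderline $p = s$ where $p^*$ is only arbitrarily large rather than fixed.
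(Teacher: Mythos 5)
Your proposal is correct and follows exactly the route the paper indicates: the paper does not reproduce a proof, but states that the result is folklore, that ``the argument is ultimately the same as in the Euclidean space,'' and refers to the appendix of \cite{BV24} for a self-contained Moser iteration --- which is precisely what you sketch (normalize via $v=u^++k$, Caccioppoli against $\eta^p(v\wedge M)^\beta$, local Sobolev inequality giving a reverse-H\"older chain, iterate, and the interpolation trick to descend to $L^1$).

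One step you should not leave tacit is the reduction itself. That $v = u^+ + k$ satisfies $\Delta_p v \ge -\tilde g\, v^{p-1}$ against \emph{arbitrary} nonnegative test functions is not an immediate consequence of the hypothesis on $u$: the weak upper gradient of $v$ is $\chi_{\{u>0\}}|D u|$, and on $\{u\le 0\}$ the right-hand-side term $g|u|^{p-1}$ is governed by $u^-$, not by $v$. To transfer the subsolution inequality from $u$ to $v$ one must either run a Kato-type argument --- test the inequality for $u$ against $\phi\cdot\min(u^+/\eps,1)$, discard the resulting extra nonnegative term $\eps^{-1}\int\chi_{\{0<u<\eps\}}\phi\,|D u|^p\,\d\mm$, and pass to the limit $\eps\to0^+$ --- or, equivalently, test the original inequality directly with a function that vanishes on $\{u\le 0\}$, i.e.\ replace $(v\wedge M)^\beta$ by $(v\wedge M)^\beta - k^\beta$ (the form used in the classical Euclidean treatment, e.g.\ Gilbarg--Trudinger, Thm.\ 8.17). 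The shift by $k$ is thus doing double duty: it bounds $v$ away from zero for the chain rule \emph{and} makes the natural test function vanish where $u\le 0$, and only on $\{u>0\}$ does the bound $|u|^{p-1}=(u^+)^{p-1}\le v^{p-1}$ hold. A second small point: when $\|f\|_{L^q(B_1(x))}=0$ the normalization $k=0$ degenerates; replace $k$ by $k+\delta$ and let $\delta\to0^+$ at the end. Modulo these two routine but essential details, the outline and the dependence of the constant $C_1$ on $p,q,\lambda,C_D,C_P,s,c_0$ are as stated.
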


\begin{theorem}[Harnack's inequality for supersolutions]\label{thm:sub harnack}
    Fix $p\in(1,s]$ and $q>s/p.$ Suppose that $u$ is non-negative and satisfies
    $$\Delta_pu\le f+g|u|^{p-1} \quad \text{in $B_{35\lambda}(x)\subset  \X$,} \quad    \mm(B_1(x))=1,$$
    for some $f,g\in L^q(B_{35\lambda}(x))$, with $\|g\|_{L^q(B_{35\lambda}(x))}\le c_0.$ 
    Then  there exists $m\in[1,\infty)$ such that
		\begin{equation}\label{eq:sub estimate}
			\essinf_{B_{\frac 12}(x)} u  + \|f\|_{L^q(B_{25}(x))}^\frac{1}{p-1}  \ge C_2 \|u\|_{L^m(B_1(x))}
		\end{equation}
			where  $C_2$ and $m$ depend only on $p,q,\lambda,C_D,C_P,s,c_0$. 
\end{theorem}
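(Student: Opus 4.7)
The plan is to carry out the classical Moser iteration scheme for the weak Harnack inequality, adapted to the metric setting, complemented by a $\mathrm{BMO}$ / John--Nirenberg bridge between positive and negative powers of $u$. Beyond what was used to prove Theorem \ref{thm:sup harnack}, the only additional analytic ingredients are the local Sobolev inequality stated just before Theorem \ref{thm:morreyà} and the availability of the John--Nirenberg inequality on doubling metric measure spaces (see e.g.\ \cite[Ch.~3]{BB}). A preliminary normalisation replaces $u$ by $\bar u := u + k$ with $k := \|f\|_{L^q(B_{25}(x))}^{1/(p-1)}$; this absorbs the inhomogeneity into the principal term and reduces the claim to the inequality $\Delta_p \bar u \le \tilde g\, \bar u^{p-1}$ with $\tilde g \in L^q$ of norm controlled by $1 + c_0$.

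The core of the argument consists of two Moser chains. For $\alpha < -(p-1)$, I would test the supersolution inequality against $\phi = \eta^p \bar u^{\alpha}$, with $\eta$ a Lipschitz cutoff, use the chain rule for weak upper gradients and apply Young's inequality to reabsorb cross-terms. This produces a Caccioppoli-type bound
\begin{equation*}
\int \eta^p |\nabla \bar u^{\beta}|^p \,\d\mm \le C(\alpha) \int (|\nabla \eta|^p + \eta^p |\tilde g|)\,\bar u^{\beta p}\,\d\mm, \qquad \beta := \frac{\alpha+p-1}{p}<0,
\end{equation*}
which, combined with the local Sobolev inequality applied to $\eta \bar u^{\beta}$, yields a reverse-Hölder estimate on $\bar u^{\beta}$. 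Iterating along geometric chains of radii and exploiting the freedom to take $|\alpha|$ arbitrarily large gives, for every $m_0 > 0$,
\begin{equation*}
\essinf_{B_{1/2}(x)} \bar u \ge C_{m_0} \Bigl(\int_{B_1(x)} \bar u^{-m_0}\,\d\mm\Bigr)^{-1/m_0}.
\end{equation*}
A parallel iteration on the range $0 < \alpha + p - 1 < p - 1$ converts, for each fixed $m \ge 1$ and some small $t_0 > 0$, the $L^m$-norm of $\bar u$ on $B_1(x)$ into its $L^{t_0}$-norm on a slightly enlarged ball.

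What remains is to bridge $L^{t_0}$ and $L^{-t_0}$. Testing the supersolution inequality against the borderline $\phi = \eta^p \bar u^{1-p}$ produces a $p$-energy bound on $w := \log \bar u$ of the form $\int \eta^p |\nabla w|^p \,\d\mm \le C \int (|\nabla \eta|^p + \eta^p |\tilde g|) \,\d\mm$. Combined with the local Poincaré inequality (applied on the enlarged ball $B_{\lambda r}$, which accounts for the factor $35\lambda$ in the hypothesis) this upgrades to $\|w\|_{\mathrm{BMO}(B_r)} \le C$, and the John--Nirenberg inequality on doubling metric measure spaces then supplies $t_0 > 0$ with
\begin{equation*}
\int_{B_1(x)} \bar u^{t_0}\,\d\mm \cdot \int_{B_1(x)} \bar u^{-t_0}\,\d\mm \le C.
\end{equation*}
Chaining the three estimates closes the argument. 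The main obstacle is this last step: tracking all the enlargement constants so that the individual Moser chains can be concatenated across $B_{1/2}(x)\subset B_1(x)\subset B_{25}(x)\subset B_{35\lambda}(x)$, and checking that the specific form of the John--Nirenberg lemma required here is actually at disposal under the standing doubling + Poincaré hypotheses.
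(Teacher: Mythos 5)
Your outline is the standard Moser iteration proof of the weak Harnack inequality (normalisation by $k=\|f\|_{L^q}^{1/(p-1)}$, a negative-exponent chain for the infimum, a small positive-exponent reverse-Hölder chain, and the $\log\bar u$ / BMO / John--Nirenberg crossover), which is precisely the route the paper refers to: the text does not reprove the statement but points to the Moser-iteration arguments in \cite{BM06} and to the self-contained proof in the appendix of \cite{BV24}. So the proposal is correct and follows essentially the same approach as the paper's cited proof.
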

Harnack's inequalities in metric setting for $p$-sub/superharmonic functions were first obtained in \cite{KS01} using De Giorgi's method and later in \cite{BM06} using Moser's iteration. In \cite{LMP06} the same was shown for $p$-eigenfunctions and in \cite{GongManfrediMikko12} for the $p$-Poisson equation. These results did not use the PDE-formulation, in fact they hold without the standing assumptions i) and ii) of Section \ref{sec:pre}. We refer to \cite[Chapter 8]{BB} for more detailed references and self-contained proofs. On the other hand the Harnack's inequalities in the PDE-formulation, as stated in Theorem \ref{thm:sup harnack} and Theorem \ref{thm:sub harnack}, are considered folklore among experts as the argument is ultimately the same as in the Euclidean space. A full self contained proof can be found in the appendix of \cite{BV24}.

We next present some standard consequences of the  Harnack's inequalities.
\begin{cor}[H\"older regularity of solutions]\label{cor:holder}
     Fix $p\in(1,s]$ and $q>s/p.$  Suppose that 
    $$\Delta_pu=f+gu|u|^{p-2} \quad \text{in $\Omega\subset \X$},$$
    for some $f,g\in L^q_{\loc}(\Omega)$. Then $u$ has a locally H\"older continuous representative in $\Omega.$
\end{cor}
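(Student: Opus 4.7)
The plan is to adapt the classical De Giorgi--Moser oscillation-decay argument to the metric setting, using Theorem \ref{thm:sup harnack} and Theorem \ref{thm:sub harnack} as black boxes. Since the statement is local, I would fix a compact subset $K \Subset \Omega$ and work on balls $B_r(x) \subset \Omega$ with $x \in K$ and $r \leq r_0$, where $r_0$ is chosen so that $B_{35\lambda r_0}(x) \subset \Omega$ for every $x \in K$.

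First I would establish local boundedness of $u$. Since $\Delta_p$ changes sign under $u \mapsto -u$, both $u$ and $-u$ satisfy inequalities of the form $\Delta_p w \geq f' + g'|w|^{p-1}$, after using $|g\,|u|^{p-2}u| \leq |g|\,|u|^{p-1}$ to absorb the nonlinear term on the right-hand side. The key technical step is the rescaling that normalizes $B_r(x)$ to a unit ball of unit mass: the change of variables $y \mapsto (y-x)/r$ combined with the renormalization $\mm \to \mm/\mm(B_r(x))$ turns $\Delta_p u = F$ into a rescaled equation on the unit ball whose right-hand side has $L^q$ norm of order $r^{p - s/q} \|F\|_{L^q(B_r(x))}$, once one controls $\mm(B_r(x))^{-1/q}$ via the locally doubling bound. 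The hypothesis $q > s/p$ is exactly what makes the exponent $p - s/q$ positive, and so one gets the smallness of the coefficient needed to meet the assumption $\|g\|_{L^q} \leq c_0$ of Theorem \ref{thm:sup harnack} for $r$ small. This yields $u \in L^\infty_{\loc}(\Omega)$, after which the nonlinear term $g|u|^{p-2}u$ may be absorbed into $f$ with possibly larger $L^q$ norm, and one henceforth assumes $\Delta_p u = F$ with $F \in L^q_{\loc}(\Omega)$.

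Next I would run the oscillation decay. Fix $x \in K$, $r \leq r_0$, and set $M_r = \esssup_{B_r(x)} u$, $m_r = \essinf_{B_r(x)} u$, $\omega(r) = M_r - m_r$. The nonnegative functions $v_1 = u - m_r$ and $v_2 = M_r - u$ both satisfy $\Delta_p v_i \leq |F|$ on $B_r(x)$ (using additive invariance of $\Delta_p$ and its odd symmetry in $u$). Applying Theorem \ref{thm:sub harnack} to each $v_i$, again after the rescaling of the previous step, produces
\begin{equation*}
\essinf_{B_{r/2}(x)} v_i + C\, r^{\alpha}\, \|F\|_{L^q(B_{35\lambda r}(x))}^{1/(p-1)} \,\geq\, c\, \Big(\fint_{B_r(x)} v_i^m \,\d\mm\Big)^{1/m}
\end{equation*}
for some exponents $m \geq 1$, $c > 0$, and $\alpha = (p - s/q)/(p-1) > 0$. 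Since $v_1 + v_2 = \omega(r)$ pointwise, summing the two inequalities and using $\essinf_{B_{r/2}} v_1 + \essinf_{B_{r/2}} v_2 = \omega(r) - \omega(r/2)$ yields a contraction of the form $\omega(r/2) \leq (1 - \delta)\, \omega(r) + C\, r^{\alpha}$ with $\delta \in (0,1)$. A standard iteration lemma applied to this contraction then gives $\omega(\rho) \leq C(\rho/r_0)^{\beta}(\omega(r_0) + r_0^{\alpha})$ for some $\beta > 0$, which is precisely local Hölder continuity at $x$, with estimates uniform over $x \in K$.

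The main obstacle will be the rescaling bookkeeping: one must track how the $p$-Laplace equation, the $L^q$ norms of the data, and the Poincaré/doubling constants transform under $y \mapsto (y-x)/r$ together with $\mm \to \mm/\mm(B_r(x))$, and verify that the resulting constants $c_0$, $C_1$, $C_2$, $m$ in Theorems \ref{thm:sup harnack} and \ref{thm:sub harnack} can be chosen uniformly in $x \in K$ and $r \leq r_0$. This uniformity hinges on the fact that $C_D(R)$ and $C_P(R)$ depend only on the upper scale $R$ and not on the base point, and on the positivity of $p - s/q$, which simultaneously controls the smallness of the rescaled coefficients and drives the convergence of the iteration.
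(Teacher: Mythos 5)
Your proposal is correct and is essentially the argument the paper has in mind: the paper does not write out a proof but simply remarks that ``the proof of the above starting from inequalities \eqref{eq:sup estimate} and \eqref{eq:sub estimate} is the same as in the Euclidean setting'' and cites \cite[Section 4.4]{HanLinbook}. Your two-step structure (local boundedness via Theorem \ref{thm:sup harnack}, then oscillation decay by applying Theorem \ref{thm:sub harnack} to $u-m_r$ and $M_r-u$, summing, and iterating) is exactly that classical De Giorgi--Moser scheme, and the exponent bookkeeping $\alpha=(p-s/q)/(p-1)>0$ is the right way the hypothesis $q>s/p$ enters.

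One small presentational point: in a metric measure space there is no literal change of variables $y\mapsto (y-x)/r$, since there is no linear structure. What you actually do is pass to the rescaled metric measure space $(\X,\sfd/r,\mm/\mm(B_r(x)))$, on which $B_r(x)$ becomes a unit ball of unit mass and the constants $C_D$, $C_P$, $\lambda$, $s$ are unchanged up to controlled factors for $r\le r_0$. The scaling formula you write for $\|F\|_{L^q}$ is correct for this operation, so the substance of your argument is fine; just replace the change-of-variables language with the rescaling of the metric and measure.
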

The proof of the above starting from inequalities \eqref{eq:sup estimate} and \eqref{eq:sub estimate} is the same as in the Euclidean setting (see e.g.\ \cite[Section 4.4]{HanLinbook})
\begin{remark}
    Note that if $p>s$ the solution is automatically H\"older continuous by the Morrey's embedding of Theorem \ref{thm:morreyà}.
\end{remark}

\begin{cor}
    $p$-superharmonic (resp.\ $p$-subharmonic) functions have a lower (resp.\ upper) semicontinuous representative.
\end{cor}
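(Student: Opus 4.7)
The plan is to prove the superharmonic case; the subharmonic version follows by applying it to $-u$, as $\Delta_p(-u)=-\Delta_p u$ reverses the sign. By Proposition \ref{prop:equivalence p} we have $u \in W^{1,p}_\loc(\Omega)$ with $\Delta_p u \le 0$. The natural candidate for an lsc representative is
\[
u^\ast(x) := \lim_{r \to 0^+} \essinf_{B_r(x)} u = \sup_{r>0}\, \essinf_{B_r(x)} u,
\]
which is well defined in $(-\infty,+\infty]$ since $\essinf_{B_r(x)} u$ is monotone nonincreasing in $r$. Lower semicontinuity of $u^\ast$ is direct: given $x_n \to x$ and a fixed small $r > 0$, eventually $B_{r/2}(x_n) \subset B_r(x)$, so $u^\ast(x_n) \ge \essinf_{B_{r/2}(x_n)} u \ge \essinf_{B_r(x)} u$; taking $\liminf_{n\to\infty}$ and then $r \to 0^+$ gives $\liminf u^\ast(x_n) \ge u^\ast(x)$. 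It thus suffices to show $u^\ast = u$ $\mm$-a.e.\ in $\Omega$.

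For $u^\ast \le u$ a.e., I would use Lebesgue points of $u$, which form a full-measure set because $\mm$ is locally doubling. At any such point $x$ and for any $\delta > 0$, approximate continuity yields $\mm(\{y \in B_r(x):u(y)<u(x)+\delta\})>0$ for all sufficiently small $r$, so by definition of essential infimum $\essinf_{B_r(x)} u < u(x)+\delta$; passing to the limit $r \to 0^+$ and then $\delta \to 0^+$ gives $u^\ast(x) \le u(x)$. For the reverse inequality $u^\ast \ge u$ a.e., I would first invoke Theorem \ref{thm:sup harnack} applied to the $p$-subharmonic function $-u$ (with $f=g=0$) to conclude that $u$ is locally essentially bounded from below. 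Then fix $x_0 \in \Omega$ and a ball $B_R(x_0) \Subset \Omega$ and set $v:=u-\essinf_{B_R(x_0)} u \ge 0$ on $B_R(x_0)$; since additive constants do not affect $\Delta_p$, $v$ remains $p$-superharmonic. Applying the weak Harnack inequality of Theorem \ref{thm:sub harnack} to $v$ on sub-balls $B_r(x_0) \subset B_R(x_0)$ (after the standard rescaling to unit measure) and combining with Jensen's inequality for the exponent $m \ge 1$ provided there, one obtains
\[
\essinf_{B_{r/2}(x_0)} v \;\ge\; C_0 \fint_{B_r(x_0)} v\,\d\mm,
\]
for some $C_0 > 0$ independent of $r \in (0,r_0)$. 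Passing to the limit $r \to 0^+$ at a Lebesgue point $x_0$, the right-hand side tends to $C_0(u(x_0)-\essinf_{B_R(x_0)} u)$ and the left-hand side to $u^\ast(x_0)-\essinf_{B_R(x_0)} u$, yielding
\[
u^\ast(x_0) \;\ge\; C_0\,u(x_0) + (1-C_0)\,\essinf_{B_R(x_0)} u.
\]
Finally, letting $R \to 0^+$ one has $\essinf_{B_R(x_0)} u \to u^\ast(x_0)$ by definition, and the inequality collapses to $u^\ast(x_0) \ge u(x_0)$.

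The principal technical point is to ensure that the constant $C_0$ in the weak Harnack estimate is uniform in $r$ after the Jensen reduction: Theorem \ref{thm:sub harnack} is stated only for unit balls of unit measure, so this relies on the standard rescaling invariance afforded by local doubling and the local Poincaré inequality to absorb the radius into doubling and Poincaré data that remain uniform on $B_R(x_0)$. A minor additional verification is that the set of Lebesgue points of $u$ is automatically contained in the set where $\essinf_{B_R(x_0)} u \to u^\ast(x_0)$ as $R \to 0^+$, which is immediate from the definition of $u^\ast$.
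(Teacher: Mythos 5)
Your proof is correct, but takes a genuinely different route from the paper's for the inequality $u^*\ge u$ $\mm$-a.e.\ (the easy direction $u^*\le u$ at Lebesgue points is handled essentially the same way in both). The paper applies the subsolution Harnack estimate \eqref{eq:sup estimate} to the auxiliary function $(u(x)-u)^+$, using the non-trivial but standard fact that the positive part of a $p$-subharmonic function is $p$-subharmonic (cf.\ \cite[Prop.\ 7.12]{BB}); this yields $u(x)-\essinf_{B_r(x)}u\le C\fint_{B_r(x)}|u-u(x)|\,\d\mm\to 0$ directly at Lebesgue points. You instead invoke the supersolution weak Harnack \eqref{eq:sub estimate} applied to $v=u-\essinf_{B_R(x_0)}u$, which is a non-negative $p$-superharmonic function by the trivial observation that shifting by a constant preserves both properties, and then let $r\to 0^+$ followed by $R\to 0^+$ to collapse the inequality. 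Your route avoids the truncation lemma but requires a preliminary application of \eqref{eq:sup estimate} to $-u$ to get local essential boundedness from below, plus the extra limit in $R$ and the uniformity of $C_0$ under rescaling (a point you rightly flag; it follows from the monotonicity of the local doubling and Poincar\'e data, and is the same kind of rescaling the paper uses implicitly). Both arguments are valid; the paper's is shorter, while yours substitutes elementary algebra of constant shifts for the truncation lemma at the cost of one more limiting step.
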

\begin{proof}
Suppose $u$ is $p$-superharmonic in $\Omega$. For all $x \in \Omega$ set \sloppy $u^*(x)\coloneqq \lim_{r \to 0^+} \essinf_{B_r(x)} u $. It is easy to check that $u^*$ is lower semicontinuous. By the Lebesgue differentiation theorem (which holds under the locally doubling  assumption, see e.g.\ \cite[Section 3.4]{HKST15}) we have that $\fint_{B_r(x)} |u-u(x)|\d \mm \to 0$ for $\mm$-a.e.\ point $x\in \Omega$, called Lebesgue point. Clearly by definition $u^*(x)\le \limsup_{r \to 0^+} \fint_{B_r(x)}u\d \mm=u(x)$ for each Lebesgue point $x.$ It is not hard to check that $(u(x)-u)^+$ is $p$-subharmonic in $\Omega$ (see e.g.\ \cite[Prop.\ 7.12]{BB}). Hence by Harnack's inequality in \eqref{eq:sup estimate} 
\[
\esssup_{B_r(x)} (u(x)-u)\le \esssup_{B_r(x)} (u(x)-u)^+ \le C \fint_{B_r(x)}|u-u(x)|\d \mm \to 0,
\]
which implies that $u^*\ge u$ $\mm$-a.e.\ in $\Omega$ as well.
\end{proof}

\begin{cor}[Strong maximum principle]
Let $\Omega \subset \X$ be connected. Suppose that $u$ is a $p$-superharmonic and lower semicontinuous function in $\Omega$ which attains its minimum in $\Omega$. Then $u$ is constant. 
\end{cor}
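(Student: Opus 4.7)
Let $m \coloneqq \min_{\Omega} u$ and consider the (nonempty) level set $E \coloneqq \{x \in \Omega : u(x) = m\}$. Since $u$ is lower semicontinuous, $E = \{u \le m\}$ is closed in $\Omega$. By the connectedness of $\Omega$ it therefore suffices to show that $E$ is open; the plan is to prove this using Harnack's inequality for supersolutions (Theorem \ref{thm:sub harnack}) applied to the nonnegative $p$-superharmonic function $v \coloneqq u - m$.

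Fix $x_0 \in E$ and choose $r>0$ so small that $B_{35\lambda r}(x_0) \Subset \Omega$. The function $v$ lies in $W^{1,p}_\loc(\Omega)$, satisfies $v \ge 0$ and $\Delta_p v \le 0$ in $\Omega$ (since $|Dv| = |Du|$), and is lower semicontinuous with $v(x_0) = 0$. Applying Theorem \ref{thm:sub harnack} to $v$ on a rescaled version of $B_r(x_0)$, with $f = g = 0$, yields an inequality of the form
\begin{equation*}
    \essinf_{B_{r/2}(x_0)} v \;\ge\; C\left(\fint_{B_r(x_0)} v^{m_0}\,\d\mm\right)^{1/m_0}
\end{equation*}
for some $m_0 \ge 1$ and some constant $C>0$ depending on $r$ and the geometry but independent of $v$.

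Next, since $v$ is lower semicontinuous and $v(x_0) = 0$, for every $\eta>0$ the set $\{v < \eta\}$ is an open neighborhood of $x_0$, hence contains some ball around $x_0$. Under the locally doubling assumption nonempty open balls carry positive $\mm$-measure, so $\mm(\{v<\eta\}\cap B_{r/2}(x_0))>0$, which forces $\essinf_{B_{r/2}(x_0)} v \le \eta$. Letting $\eta \to 0^+$ gives $\essinf_{B_{r/2}(x_0)} v = 0$, and combining with the Harnack inequality above we deduce $v \equiv 0$ $\mm$-a.e.\ on $B_r(x_0)$. Finally the lower semicontinuity of $u$, together with the fact that $u = m$ on a subset of full measure in $B_r(x_0)$ (hence on a dense subset, again by doubling), and the pointwise bound $u \ge m$, imply $u \equiv m$ on $B_r(x_0)$. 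Thus $B_r(x_0) \subset E$, showing $E$ is open and concluding the proof.

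The routine technical step is the rescaling needed to put the Harnack hypothesis $\mm(B_1(x))=1$ into the form used above; the only genuinely substantive point is the passage from the pointwise value $v(x_0)=0$ to $\essinf_{B_{r/2}(x_0)} v = 0$, which is the reason the lower semicontinuity assumption and the positivity of the measure of open sets (a consequence of doubling) are both needed.
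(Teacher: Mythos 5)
Your argument follows the same overall structure as the paper's: the minimum level set is closed by lower semicontinuity, and Harnack's inequality for supersolutions is used to show it is also open. However, there is a genuine error in the step used to obtain $\essinf_{B_{r/2}(x_0)} v = 0$.

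You claim that ``since $v$ is lower semicontinuous and $v(x_0)=0$, for every $\eta>0$ the set $\{v<\eta\}$ is an open neighborhood of $x_0$.'' This confuses lower with upper semicontinuity: for a lower semicontinuous function it is the \emph{superlevel} sets $\{v>c\}$ that are open, while $\{v<\eta\}$ need not be open and need not contain any ball around $x_0$. Indeed, a lower semicontinuous function may drop to $0$ at a single isolated point while staying $\ge 1$ everywhere else (downward jumps are precisely what l.s.c.\ allows), and for such a function $\essinf_{B_{r/2}(x_0)} v$ is positive, so the argument as written does not close. Put differently: $v(x_0)=0$ together with l.s.c.\ alone gives no upper bound on $\essinf_{B_r(x_0)} v$; the inequality $v(x_0)\le \lim_{\rho\to 0^+}\essinf_{B_\rho(x_0)} v$ that l.s.c.\ does supply goes in the wrong direction for your purpose.

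What makes the paper's argument work is that the lower semicontinuous representative of a $p$-superharmonic function constructed in the preceding corollary is not arbitrary but the \emph{canonical} one, $u^*(x)=\lim_{r\to 0^+}\essinf_{B_r(x)}u$, and the statement of the strong maximum principle is to be read for this representative. For this choice, $v(x_0)=0$ means $\lim_{\rho\to 0^+}\essinf_{B_\rho(x_0)} v = 0$, and since $\rho\mapsto\essinf_{B_\rho(x_0)}v$ is nonincreasing and $v\ge 0$, one obtains $\essinf_{B_r(x_0)}v=0$ for every ball $B_r(x_0)\subset\Omega$ directly, without any appeal to sublevel sets being open. Once this is replaced, the remainder of your argument (Harnack yielding $\|v\|_{L^{m_0}(B_r(x_0))}=0$, the passage from $v=0$ a.e.\ to $v\equiv 0$ via l.s.c.\ and positivity of the measure on open sets, and the connectedness conclusion) is correct and matches the paper.
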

\begin{proof}
    We can assume that $\min_{\Omega} u=0$. Then the set $C\coloneqq \{ x \in \Omega \ : \  u(x)=0\}$ is closed. For all $x \in C$ we have $B_{35 r\lambda }(x)\subset \Omega$ for $r$ small enough. The rescaled version of \eqref{eq:sub estimate} implies that $ \|u\|_{L^m(B_r(x))}=0$ and so $u\equiv 0$ in $B_r(x).$ Hence $C$ is also open and thus $C=\Omega.$
\end{proof}

The H\"older regularity result in Corollary \ref{cor:holder} is sharp in the sense that there are examples of harmonic functions in locally doubling metric measure spaces supporting a Poincaré inequality that are not locally Lipschitz (see the introduction of \cite{Koskela-Rajala-Shanmugalingam03}).

We conclude this part by mentioning some results about continuity at the boundary for $p$-harmonic functions in metric setting. Roughly speaking the key assumption is that the complement of the open set is sufficiently big. Recall the following standard definition.
\begin{definition}[Corkscrew condition]\label{def:cork}
    A set $\Omega \subset \X$ satisfies the \textit{interior (resp.\ exterior) corkscrew condition} if there exists constants $\lambda>0$ and $r>0$ such that for all $x\in \partial \Omega$ and $s\in(0,r)$ there exists a ball of radius $\lambda s$ contained in $B_s(x)\cap \Omega$ (resp.\ $B_s(x)\cap (\X\setminus E)$).
\end{definition}
Notable examples of sets satisfying the interior (resp.\ exterior)  corkscrew condition  are enlargements of sets (reps.\ the complement of enlargements of sets) in length spaces. In particular balls satisfy the interior corkscrew condition.
\begin{prop}[Continuity up to boundary of $p$-harmonic functions]\label{prop:boundary cont}
     Fix $p\in(1,\infty)$. Let $\Xdm$ be locally doubling and supporting a local Poincaré inequality. Let $\Omega\subset \X$ be bounded and satisfying the exterior corkscrew condition and $\mm(\X\setminus \Omega)>0$. Let also $f:\partial \Omega\to \rr$ be continuous. Then there exists a unique $u \in C(\overline\Omega)$ such that
    \[
    \begin{cases}
        \Delta_pu=0, & \text{in $\Omega$},\\
         u=f, & \text{in $\partial \Omega$}.
    \end{cases}
    \]
\end{prop}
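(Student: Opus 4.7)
The plan is to run the standard Perron-type construction: produce a $p$-harmonic candidate in the interior by Lipschitz approximation of $f$ combined with the solvability of the Dirichlet problem for $W^{1,p}$ boundary data, and then upgrade it to a function continuous up to $\partial\Omega$ via a barrier argument based on the exterior corkscrew condition. Uniqueness will follow by a standard comparison estimate.

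First I would set up the approximation. Since $(\X,\sfd)$ is complete and doubling on bounded sets (which is a consequence of the local doubling hypothesis), $\overline\Omega$ is compact, so $f\in C(\partial\Omega)$ extends by Tietze to some $F\in C(\X)$, and the inf-convolutions $F_k(x):=\inf_y\bigl(F(y)+k\,\sfd(x,y)\bigr)$ give $k$-Lipschitz functions with $F_k\to F$ uniformly on $\overline\Omega$. Truncating to Lipschitz $g_k\in W^{1,p}(\X)$ that agree with $F_k$ near $\overline\Omega$, I invoke the solvability of the Dirichlet problem for $\Delta_p u_k=0$ with $u_k\in g_k+W^{1,p}_0(\Omega)$ (the Poisson equation case with zero right-hand side stated in Section \ref{sec:boundary}). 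The standard comparison principle for $p$-harmonic functions, obtained by testing the equation for $u_k-u_j$ against $(u_k-u_j-\|g_k-g_j\|_{L^\infty(\partial\Omega)})^+\in W^{1,p}_0(\Omega)$ and using strict monotonicity of the map $\xi\mapsto|\xi|^{p-2}\xi$, yields $\|u_k-u_j\|_{L^\infty(\Omega)}\le\|g_k-g_j\|_{L^\infty(\partial\Omega)}$, so $u_k$ converges uniformly in $\overline\Omega$ to some $u\in C(\Omega)$. Energy estimates plus lower semicontinuity of the $p$-energy under weak convergence give $u\in W^{1,p}_{\loc}(\Omega)$ with $\Delta_p u=0$.

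The main obstacle is continuity at the boundary: I need $\lim_{\Omega\ni y\to x_0}u(y)=f(x_0)$ for every $x_0\in\partial\Omega$. This requires producing, for each such $x_0$, a \emph{barrier} — a continuous nonnegative $p$-superharmonic function $w$ on $\Omega\cap B_{\rho}(x_0)$ with $w(x_0)=0$ and $w>0$ elsewhere on $\overline{\Omega\cap B_\rho(x_0)}\setminus\{x_0\}$. The exterior corkscrew condition provides, for each $s\in(0,r)$, a ball $B_{\lambda s}(y_s)\subset B_s(x_0)\setminus\Omega$, which by monotonicity of capacity and volume doubling gives the uniform lower bound
\[
\Cap_p\bigl(\overline{B_s(x_0)}\setminus\Omega,\,B_{2s}(x_0)\bigr)\ \ge\ c\,\Cap_p\bigl(\overline{B_{\lambda s}(y_s)},\,B_{2s}(x_0)\bigr),
\]
i.e.\ the complement of $\Omega$ is uniformly $p$-fat at $x_0$. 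With this $p$-fatness at hand one builds the barrier by summing suitably rescaled truncations of the $p$-electrostatic potentials from Proposition \ref{prop:electrop} over a dyadic sequence of annuli around $x_0$; this is precisely the Wiener-type boundary regularity argument available in our setting (see \cite{BB}). Given the barrier, fix $\varepsilon>0$, choose $\rho$ so small that $|g_k-g_k(x_0)|\le\varepsilon$ on $\partial\Omega\cap B_\rho(x_0)$, and compare $u_k$ on $\partial(\Omega\cap B_\rho(x_0))$ with $g_k(x_0)\pm\varepsilon\pm M w$ for $M$ depending only on $\|u_k\|_\infty$ and on the infimum of $w$ on $\partial B_\rho(x_0)$; the comparison principle propagates the inequality to the interior, yielding $\limsup_{y\to x_0}|u_k(y)-g_k(x_0)|\le\varepsilon$ uniformly in $k$. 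Passing $k\to\infty$ and then $\varepsilon\to 0$ gives $u\in C(\overline\Omega)$ with $u=f$ on $\partial\Omega$.

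For uniqueness, suppose $u_1,u_2\in C(\overline\Omega)$ both solve the problem. For any $\varepsilon>0$, continuity and $u_1=u_2$ on $\partial\Omega$ imply that $\{u_1>u_2+\varepsilon\}$ is compactly contained in $\Omega$, so $(u_1-u_2-\varepsilon)^+\in W^{1,p}_0(\Omega)$ is an admissible test function. Testing the difference $\Delta_p u_1-\Delta_p u_2=0$ against it and using the strict monotonicity of $\xi\mapsto|\xi|^{p-2}\xi$ on $L^2(T\X)$ forces $\nabla(u_1-u_2)=0$ on $\{u_1>u_2+\varepsilon\}$, hence that set is empty; sending $\varepsilon\to 0$ and swapping the roles of $u_1$ and $u_2$ gives $u_1=u_2$.
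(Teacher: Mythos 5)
Your proposal is correct and follows essentially the same route as the source the paper cites for this statement (\cite[Corollary 11.25]{BB}, see also \cite[Appendix B]{GV23}): existence via Lipschitz approximation of the boundary data together with the $L^\infty$-comparison principle, boundary continuity via barriers obtained from the uniform $p$-fatness of the complement that the exterior corkscrew condition guarantees, and uniqueness via the monotonicity of $\xi\mapsto|\xi|^{p-2}\xi$. One minor phrasing point: in the comparison step you should say you test the equations $\Delta_p u_k=0$ and $\Delta_p u_j=0$ separately against the same test function and subtract (as you in fact do in the uniqueness step), since $u_k-u_j$ does not itself satisfy a $p$-Laplace equation.
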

We can also obtain the regularity up the boundary for the $p$-electrostatic potential introduced in Proposition \ref{prop:electrop}.
\begin{prop}[Boundary continuity of $p$-electrostatic potential]\label{prop:improved prop}
   Fix $p\in(1,\infty)$. Let $\Xdm$ be locally doubling and supporting a local Poincaré inequality. Let $\Omega\subset \X$ be bounded and satisfying the exterior corkscrew condition and $\mm(\X\setminus \Omega)>0$. Let also $K\subset \Omega$ be compact and satisfying the interior corkscrew condition. Then the $p$-electrostatic potential of $K$ in $\Omega$ (as in Proposition \ref{prop:electrop}) has a continuous representative in $\overline \Omega$ and satisfies
   \[
   \begin{cases}
       u=0, & \text{in $\partial \Omega$},.\\
       u=1, & \text{in $\partial K$}.\\
   \end{cases}
   \]
\end{prop}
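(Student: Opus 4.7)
The plan is to reduce the statement to Proposition \ref{prop:boundary cont} applied to the annular domain $\Omega'\coloneqq \Omega\setminus K$. By Proposition \ref{prop:electrop} the $p$-electrostatic potential $u$ lies in $W^{1,p}_0(\Omega)$, equals $1$ on $K$, and is $p$-harmonic on $\Omega'$. Since $K\subset\Omega$ is compact and $\Omega$ is open, $\partial \Omega$ and $\partial K$ are disjoint closed sets, and a quick topological check shows $\partial \Omega' = \partial \Omega\sqcup \partial K$.  Hence the function $f\colon \partial\Omega'\to\rr$ defined by $f\equiv 0$ on $\partial\Omega$ and $f\equiv 1$ on $\partial K$ is continuous.

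Next I would verify the hypotheses of Proposition \ref{prop:boundary cont} for $\Omega'$. Boundedness and $\mm(\X\setminus \Omega')\ge \mm(\X\setminus \Omega)>0$ are immediate. For the exterior corkscrew condition on $\partial\Omega'$ I would argue on the two pieces separately: at a point $x\in\partial\Omega$, for sufficiently small $s$ one has $B_s(x)\cap K=\emptyset$ (since $\mathrm{dist}(\partial\Omega,K)>0$), so the exterior corkscrew condition for $\Omega$ produces a ball in $B_s(x)\cap(\X\setminus\Omega)\subset B_s(x)\cap(\X\setminus \Omega')$; at a point $x\in\partial K$, for small $s$ one has $B_s(x)\subset\Omega$, so the interior corkscrew condition for $K$ produces a ball in $B_s(x)\cap K\subset B_s(x)\cap(\X\setminus \Omega')$. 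Applying Proposition \ref{prop:boundary cont} with the boundary data $f$ yields a function $\tilde u\in C(\overline{\Omega'})$ which is $p$-harmonic in $\Omega'$ and equals $0$ on $\partial\Omega$, $1$ on $\partial K$.

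It remains to identify $\tilde u$ with $u$ on $\Omega'$. Both functions are $p$-harmonic on $\Omega'$, hence minimize $\int_{\Omega'}|Dv|^p\,\d\mm$ within the same affine class $g+W^{1,p}_0(\Omega')$, where $g\in W^{1,p}(\X)$ is any fixed function equal to $0$ in a neighborhood of $\partial\Omega$ and to $1$ in a neighborhood of $\partial K$ (such a $g$ exists by a standard Lipschitz cutoff). Indeed $\tilde u$ belongs to this class by construction of the Dirichlet solution, while $u-g$ has zero Sobolev trace on $\partial K$ (since $u=1=g$ on $K$) and on $\partial\Omega$ (since $u\in W^{1,p}_0(\Omega)$ and $g=0$ near $\partial\Omega$). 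Uniform convexity of the $p$-energy (cf.\ \eqref{eq:unif conv} and the uniqueness argument in the sketch of Proposition \ref{prop:pepsi dir}) then forces $u=\tilde u$ $\mm$-a.e.\ on $\Omega'$. Extending $\tilde u$ by $1$ on $K$ provides a continuous representative on $\overline{\Omega}$ with the desired boundary values.

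The main obstacle I foresee is precisely the matching step: one must ensure that the Sobolev-level minimizer that Proposition \ref{prop:boundary cont} constructs coincides with $u|_{\Omega'}$, for which the convexity of the $p$-energy within the common affine class is essential. Checking the exterior corkscrew condition at $\partial K$ (which is where the interior corkscrew hypothesis on $K$ is consumed) is the only step that really uses the full set of geometric assumptions.
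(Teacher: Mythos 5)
The paper proves neither Proposition \ref{prop:boundary cont} nor Proposition \ref{prop:improved prop}; it simply cites \cite[Corollary 11.25]{BB} (see also \cite[Appendix B]{GV23}), so there is no in-paper argument to compare against. Your reduction to the annulus $\Omega'=\Omega\setminus K$ is natural and in the spirit of those references, and your verification that $\Omega'$ inherits the exterior corkscrew condition (consuming the interior corkscrew hypothesis on $K$ at points of $\partial K$, and the exterior corkscrew hypothesis on $\Omega$ at points of $\partial\Omega$) is correct.

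The identification step, however, invokes more than Proposition \ref{prop:boundary cont} actually supplies, and this is where the real work is hidden. That proposition produces a continuous $p$-harmonic function $\tilde u$ with prescribed \emph{pointwise} boundary values; it does not assert that $\tilde u\in g+W^{1,p}_0(\Omega')$. Bridging this gap requires identifying the Perron solution with the variational one for continuous boundary data (resolutivity), which is a genuine theorem and is essentially the content packaged in the cited \cite[Chapter 10--11]{BB}. On the other side, the claim that $u-g$ ``has zero Sobolev trace on $\partial K$ since $u=1=g$ on $K$'' is too quick: the equality $u=1$ on $K$ furnished by Proposition \ref{prop:electrop} is at the $\mm$-a.e.\ level, whereas membership in $W^{1,p}_0(\Omega')$ needs vanishing quasi-everywhere on $\X\setminus\Omega'$ together with the characterization of $W^{1,p}_0$ via quasi-everywhere vanishing of quasi-continuous representatives (available under doubling and Poincar\'e, but not elementary). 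A cleaner route, and the one actually closer to \cite{BB}, bypasses $\tilde u$ entirely: the corkscrew conditions imply a uniform capacity-density estimate, so every point of $\partial\Omega'$ is a regular boundary point, and one then applies the boundary regularity theorem \emph{directly} to the $p$-harmonic function $u|_{\Omega'}$ with Sobolev boundary datum $g$, obtaining the continuous extension without any uniqueness or identification step.
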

    The proof of both Proposition \ref{prop:boundary cont} and Proposition \ref{prop:improved prop} can be found in \cite[Corollary 11.25]{BB} (see also \cite[Appendix B]{GV23}).

In fact much weaker conditions than the corkscrew condition are sufficient  to ensure continuity of the solution up to the boundary, such as the generalization of the classical Wiener criterion to metric setting (see \cite{Bjwiener,bjornwiener2} and also  \cite[Section 11.4]{BB}).

\section{Laplacian under Ricci lower bounds}\label{sec:laplacian}
\subsection{Second order regularity  and the definition of RCD space}
A standard fact about elliptic regularity is that given any open set $\Omega\subset \rr^n$ and a function $f \in \W(\Omega)$ which satisfies 
\begin{equation}
    \Delta u=f \in L^2(\Omega),
\end{equation}
it holds that $u\in W^{2,2}(\Omega')$ for all $\Omega'\subset \subset \Omega$ (see e.g.\ \cite[Section 6.3.1]{evans98}).
The motivation behind this result, at least formally, is the following identity
\begin{equation}\label{eq:eucl bochner}
   \frac 12 \Delta|D u|^2= |\H u|_{HS}^2+\la \nabla \Delta u,\nabla u\ra \quad \forall u \in C^\infty_c(\rr^n).
\end{equation}
Indeed, integrating \eqref{eq:eucl bochner} over all $\rr^n$ and integrating by parts the term on the left hand side vanishes and we obtain
\begin{equation}
    \int_{\rr^n}  |\H u|_{HS}^2 \le \int_{\rr^n} (\Delta u)^2.
\end{equation}
If instead of the flat Euclidean space we consider a general Riemannian manifold $(M,g)$ we can not expect \eqref{eq:eucl bochner} to hold. Indeed to obtain \eqref{eq:eucl bochner} we need to exchange the order of derivatives and thus expect some curvature terms to appear. As a matter of fact the correct analog of \eqref{eq:eucl bochner} is the \textit{Bochner-Weitzenb\"ock identity}:
\begin{equation}\label{eq:smooth bochner}
    \frac 12 \Delta_g|\nabla u|^2= |\H u|_{HS}^2+g (\nabla \Delta_g u,\nabla u) + {\rm Ric}_g(\nabla u,\nabla u), \quad \forall u \in C^\infty_c(M),
\end{equation}
where $\Delta_g$ is the Laplace-Beltrami operator and ${\rm Ric}_g$ is the Ricci tensor. We omit the definition of the Ricci tensor, but we observe that one can take \eqref{eq:smooth bochner} as characterization of it. From \eqref{eq:smooth bochner} it is apparent that in order to be able to estimate the Hessian in terms of the Laplacian we need to assume that the Ricci tensor is bounded from below. More precisely, assuming that ${\rm Ric}_g\ge Kg$ for some $K\in \R$ and integrating by parts \eqref{eq:smooth bochner} gives
\begin{equation}\label{eq:2calderon smooth}
    \int_{M}  |\H u|_{HS}^2\, \d \vol_g \le \int_{M} (\Delta u)^2-K |\nabla u|^2 \, \d \vol_g.
\end{equation}
All in all we see that a lower bound in the Ricci curvature entails second-order elliptic regularity. Therefore it is natural to try to force such condition on a metric measure space to get $W^{2,2}$-regularity from the Laplacian. There are however two obvious obstacles: how to impose Ricci curvature on a non-smooth structure and what is the $W^{2,2}$-space and the Hessian in metric setting. Roughly speaking, the answer to both of these issues is in identity \eqref{eq:smooth bochner}. We proceed in two steps. Multiplying \eqref{eq:smooth bochner} by a non-negative test function $\phi \in C^\infty_c(M) $ and neglecting the non-negative Hessian term we obtain the following weaker inequality:
\begin{equation}\label{eq:weak bochner}
     \frac 12 \Delta_g|D u|^2\ge g (\nabla \Delta_g u,\nabla u) + K |D u|^2, \quad \forall u \in C^\infty_c(M),
\end{equation}
The key observation is that we can give a meaning to \eqref{eq:weak bochner} also in metric measure space $\Xdm$, by using the language of distributional Laplacian introduced in Section \ref{sec:def plap}.  More precisely it makes sense to write
\begin{equation}\label{eq:weak bochner mms}\tag{B}
            \int_\X |D u|^2  \Delta h\,\d \mm \ge \int_\X \la \nabla u, \nabla \Delta u\ra h + K|D u|^2 h\d \mm,
\end{equation}
for all functions $u,h \in \W(\X)$ such that $\Delta u\in \W(\X)$, $h\ge 0$ and $h,\Delta h \in L^\infty(\mm)$. Inequality \eqref{eq:weak bochner mms} is called \textit{weak Bochner inequality} and can be used to define a notion a weak notion of Ricci curvature bonded below by $K$ in metric setting.
\begin{definition}[\hspace{1pt}\cite{AmbrosioGigliSavare12}]\label{def:rcd}
    Fix $K\in \rr.$ We say that inf.\ Hilbertian m.m.s.\  $\Xdm$ is an $\RCD(K,\infty)$ provided:
    \begin{enumerate}
        \item For all for all functions $u,h \in \W(\X)$ such that $\Delta u\in \W(\X)$ and $h,\Delta h \in L^\infty(\mm)$  the weak Bochner inequality \eqref{eq:weak bochner mms} holds.
        \item If $u \in \W(\X)$ and $|D u|\le 1$, then $u$ has a 1-Lipschitz representative.
        \item There exists $C>0$ and $x_0\in \X$ such that $\mm(B_R(x_0))\le e^{CR^2}$ holds for all $R>0.$
    \end{enumerate}
\end{definition}
\begin{remark}[Existence of many test functions]
   Is important to stress that there are many non-trivial test functions $u,h$ as in (1) in Definition \ref{def:rcd}. Indeed if $f \in L^2(\mm)$, then denoting by  $t\mapsto h_t f$ its evolution via the heat flow, it holds that $h_t f \in \W(\X)$ for $t>0$ and by linearity $\Delta h_t f= h_t \Delta f$, thus also $\Delta h_t f \in \W(\X)$. With a further regularization it is possible to obtain also a bounded Laplacian. For the definition of the heat flow and the proofs of these properties we refer to \cite{GP20}.
\end{remark}
The above definition in fact came only after the one  of $\CD(K,\infty)$ spaces introduced with a completely different approach via optimal transport in \cite{Sturm06-1} and \cite{Lott-Villani09}.  In fact $\RCD(K,\infty)$ spaces are equivalent to $\CD(K,\infty)$ spaces coupled with the infinitesimally Hilbertian condition \cite{AmbrosioGigliSavare12}.  We refer to the surveys \cite{AmbICM,gigli2023giorgi} for a background and an historical account on the topic of weak Ricci curvature bounds in metric setting. We also recall that $\RCD(K,\infty)$ spaces support a local Poincaré inequality (see \cite{Rajala12,rajala2012interpolated}) hence the results of Section \ref{sec:boundary} apply.

Going back to elliptic regularity we can now wonder if the weaker \eqref{eq:weak bochner} is still enough to obtain $W^{2,2}$-regularity (whatever that means), as we removed the Hessian term. Multiplying \eqref{eq:weak bochner} by $|\nabla u|^2$, integrating and then integrating by parts the left hand side we obtain
\begin{equation}
    \int |\nabla |\nabla u|^2|^2\d \vol_g\le 2\Lip(u)^2\int |\nabla u||\nabla \Delta u|+ K|\nabla u|^2 \d \vol_g,
\end{equation}
which formally entails the following  regularity result:
\begin{equation}\label{eq:weaker regularity}
    \Delta u \in \W(M),\, u \in \Lip(M) \implies |D u|^2 \in \W(M).
\end{equation}
Note that  $u \in W^{2,2}\cap \LIP(M)\implies |D u|^2 \in \W(M)$, hence \eqref{eq:weaker regularity} is in a sense a weaker version of \eqref{eq:2calderon smooth}.

It turns out that the same argument can be employed in RCD metric measure spaces.
\begin{theorem}[\hspace{1pt}\cite{Savare13}]
    Let $\Xdm$ be an $\RCD(K,\infty)$ space with $K\in \rr.$ Let $u \in \W\cap \LIP(\X)$ be such that $\Delta u \in \W(\X) $. Then $|D u|^2\in \W(\X).$
\end{theorem}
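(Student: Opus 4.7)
The plan is to run the heuristic at the end of the excerpt in the $\RCD$ setting by testing the weak Bochner inequality \eqref{eq:weak bochner mms} against a nonnegative approximation of $G:=|Du|^2$ itself. Since $G$ fails a priori to be an admissible test function (the definition of $\RCD(K,\infty)$ requires $\Delta h\in L^\infty$, which is essentially what we are after), I would mollify $G$ via the heat semigroup: for $s>0$ set $G_s:=h_s G$. The standing hypotheses $u\in\W(\X)\cap \LIP(\X)$ guarantee that $G\in L^\infty(\mm)$ and $\int G^2\,\d\mm\le \Lip(u)^2\int |Du|^2\,\d\mm<\infty$, so $G\in L^2(\mm)$; then by standard heat-semigroup regularisation in $\RCD$ spaces $G_s$ is non-negative, belongs to $\W(\X)\cap L^\infty(\mm)$, has $\Delta G_s\in L^2\cap L^\infty(\mm)$, and $G_s\to G$ in $L^2(\mm)$ as $s\downarrow 0$. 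This makes $G_s$ an admissible $h$ in \eqref{eq:weak bochner mms}.

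Next, I would seek a bound on $\|\nabla G_s\|_{L^2}$ that is uniform in $s$. Using that $h_s$ is self-adjoint on $L^2(\mm)$ and commutes with $\Delta$ on its domain, one has
\begin{equation*}
\|\nabla G_s\|_{L^2}^2 \;=\; -\int_\X G_s\,\Delta G_s\,\d\mm \;=\; -\int_\X G\,\Delta G_{2s}\,\d\mm,
\end{equation*}
while plugging $h=G_{2s}$ into \eqref{eq:weak bochner mms} yields
\begin{equation*}
\int_\X G\,\Delta G_{2s}\,\d\mm \;\ge\; \int_\X G_{2s}\,\la \nabla u,\nabla\Delta u\ra\,\d\mm \;+\; K\int_\X G_{2s}\,G\,\d\mm.
\end{equation*}
Combining the two and estimating the right-hand side by Cauchy--Schwarz, $|\nabla u|\le \Lip(u)$ $\mm$-a.e., and the $L^2$-contractivity $\|G_{2s}\|_{L^2}\le \|G\|_{L^2}$ of the heat semigroup, one obtains
\begin{equation*}
\|\nabla G_s\|_{L^2}^2 \;\le\; \|G\|_{L^2}\bigl(\Lip(u)\,\|\nabla\Delta u\|_{L^2} + |K|\,\|G\|_{L^2}\bigr),
\end{equation*}
a bound independent of $s>0$.

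With the uniform $\W$-bound in hand, the reflexivity of $\W(\X)$ recalled in \eqref{eq:unif conv} lets me extract a subsequence with $G_{s_k}\weakto \widetilde G$ in $\W(\X)$. Combined with $G_s\to G$ in $L^2(\mm)$, this forces $\widetilde G=G$, and hence $|Du|^2=G\in \W(\X)$ as claimed.

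The hardest ingredient will be verifying the admissibility of $G_{2s}$ as a test function in \eqref{eq:weak bochner mms}, in particular the $L^\infty$-bound on $\Delta G_{2s}$. This is not a consequence of anything stated in the excerpt and relies on quantitative smoothing estimates for the heat semigroup on $\RCD$ spaces (of the form $\|\Delta h_s f\|_{L^\infty}\lesssim s^{-1}\|f\|_{L^\infty}$), which are by now classical in this setting but definitely nontrivial. Once this admissibility and the commutation/self-adjointness properties of $h_s$ are taken for granted, the rest of the argument is a clean $L^2$-duality computation as sketched above.
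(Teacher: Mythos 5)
Your proposal is a rigorous implementation of exactly the heuristic the survey sketches immediately before the theorem (``multiplying \eqref{eq:weak bochner} by $|\nabla u|^2$ and integrating by parts''), and it matches the strategy of the cited reference: the paper itself gives no proof, only the formal Riemannian computation and the citation to Savar\'e. Your duality chain $\|\nabla G_s\|_{L^2}^2=-\int G_s\Delta G_s=-\int G\,\Delta G_{2s}$ followed by an application of the weak Bochner inequality and the uniform bound
\[
\|\nabla G_s\|_{L^2}^2\le \|G\|_{L^2}\bigl(\Lip(u)\|\nabla\Delta u\|_{L^2}+|K|\,\|G\|_{L^2}\bigr),
\]
together with reflexivity of $\W(\X)$ and $G_s\to G$ in $L^2$, is correct.

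The one point I would push back on is your claim that the missing ingredient is a ``classical'' estimate $\|\Delta h_s f\|_{L^\infty}\lesssim s^{-1}\|f\|_{L^\infty}$. Analyticity of a symmetric Markov semigroup on $L^\infty$ is \emph{not} automatic (Stein's theorem gives it on $L^p$ for $1<p<\infty$ only), and I would not rely on it in the generality of $\RCD(K,\infty)$. The standard and more robust fix — which the paper's Remark on ``Existence of many test functions'' alludes to with the phrase ``a further regularization'' — is to replace $h_s$ by the time-mollified operator $\tilde h_\eps f:=\int_0^\infty \eta_\eps(t)\,h_t f\,\d t$, where $\eta_\eps$ is a smooth compactly supported mollifier near $t=\eps$. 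Then $\Delta \tilde h_\eps f=-\int_0^\infty\eta_\eps'(t)\,h_t f\,\d t$, so $\|\Delta\tilde h_\eps f\|_{L^\infty}\le \|\eta_\eps'\|_{L^1}\|f\|_{L^\infty}$ follows from $L^\infty$-contractivity of $h_t$ alone, with no analyticity input. Since $\tilde h_\eps$ is still self-adjoint, commutes with $\Delta$, is $L^2$- and $L^\infty$-contractive, positivity-preserving, and $\tilde h_\eps^2 f=\int(\eta_\eps*\eta_\eps)(r)h_r f\,\d r$ is of the same type, your entire duality computation goes through verbatim with $\tilde h_\eps$ in place of $h_s$ and $\tilde h_\eps^2 G$ in place of $G_{2s}$. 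With this substitution your proposal becomes a complete and correct proof, along the same route as the source.
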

The above regularity result motivates the definition of the space of test functions:
\[
\test(\X)\coloneqq \left\{f \in \W\cap \LIP\cap L^\infty(\X) : \  \Delta u \in \W(\X) \right\}.
\]

A remarkable observation due to Bakry  \cite{Bakry83} in the context of abstract $\Gamma_2$-calculus  is the so-called self-improvement of the Bochner inequality. Roughly speaking it turns out that the weaker inequality in  \eqref{eq:weak bochner} implies the stronger version containing the  extra Hessian term. This can be shown by plugging in \eqref{eq:weak bochner} functions of the type $u=\Phi(f)$, where $\Phi$ are suitable polynomials and by carefully expanding all the terms via the chain and Leibniz rules.
These ideas were extended to $\RCD(K,\infty)$ spaces in \cite{Savare13} and \cite{Gigli14} leading to the following pivotal result. 
\begin{theorem}[\hspace{1pt}{\cite[Theorem 3.3.8]{Gigli14}}]\label{thm:improved bochner mms}
    Let $\Xdm$ be an $\RCD(K,\infty)$ space, $K\in \rr$, and  $u\in \test(\X)$. Then  $u \in  W^{2,2}(\X)$ and 
    \begin{equation}\label{eq:improved bochner mms}
        -\int_\X \frac12\la\nabla |D u|^2,\nabla \phi\ra \d \mm \ge \int_\X  |\H u|^2 \phi+ \la \nabla u, \nabla \Delta u\ra \phi + K|D u|^2 \phi\d\mm
    \end{equation}
   holds  for every $\phi \in L^\infty\cap \W(\X)$.
\end{theorem}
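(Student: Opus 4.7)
The strategy is Bakry's self-improvement of the Bochner inequality, as extended to the RCD setting by Savar\'e and Gigli. The input is the weak inequality \eqref{eq:weak bochner mms}, valid by definition of $\RCD(K,\infty)$; the output is the full inequality \eqref{eq:improved bochner mms} containing the Hessian term.

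The first step is to upgrade the class of admissible test functions by smooth polynomial composition. For $f_1,\ldots,f_n \in \test(\X)$ and $\Phi$ a polynomial in $n$ variables, one verifies $\Phi(f_1,\ldots,f_n) \in \test(\X)$ together with the chain and Leibniz rules
\[
\nabla\Phi(f) = \sum_i \partial_i\Phi(f)\,\nabla f_i, \qquad \Delta\Phi(f) = \sum_i \partial_i\Phi(f)\,\Delta f_i + \sum_{i,j}\partial_{ij}\Phi(f)\,\la \nabla f_i, \nabla f_j\ra,
\]
both of which follow from infinitesimal Hilbertianity and the definition of $\Delta$ through integration by parts. Substituting $u = \Phi(f_1,\ldots,f_n)$ into \eqref{eq:weak bochner mms} and expanding, each side becomes a polynomial expression in $\la \nabla f_i, \nabla f_j\ra$, $\Delta f_i$ and their gradients, with coefficients depending on the partial derivatives of $\Phi$.

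Next comes Bakry's trick: by allowing the coefficients of $\Phi$ to vary freely (for instance $\Phi(x,y) = xy + \lambda x^2$ with $\lambda \in \R$ arbitrary) and viewing the resulting inequality as a polynomial inequality in the free parameters, one forces a pointwise quadratic bound on the symmetric bilinear form
\[
Q[u](\nabla g_1, \nabla g_2) \coloneqq \tfrac12\bigl(\la \nabla\la \nabla u, \nabla g_1\ra, \nabla g_2\ra + \la \nabla\la \nabla u, \nabla g_2\ra, \nabla g_1\ra - \la \nabla u, \nabla \la\nabla g_1, \nabla g_2\ra\ra\bigr),
\]
whose integrated form against any admissible $\phi \ge 0$ is exactly \eqref{eq:improved bochner mms} with $|\H{u}|^2$ replaced by the pointwise Hilbert--Schmidt norm of $Q[u]$. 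Varying $g_1, g_2$ over a dense family of test functions, $Q[u]$ identifies with a symmetric $L^2$-section of $L^2(T\X)\otimes L^2(T\X)$ in the sense of Gigli's $L^2$-module calculus, which by definition is the Hessian $\H{u}$. Plugging this identification into the integrated inequality delivers simultaneously $u \in W^{2,2}(\X)$ and \eqref{eq:improved bochner mms}.

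The principal obstacle is the combinatorial/algebraic management of the polynomial expansion. One must choose the class of polynomials $\Phi$ rich enough so that the discriminant argument recovers the \emph{full} Hilbert--Schmidt norm $|\H{u}|^2$ and not merely a reduced quantity such as the trace $(\Delta u)^2$; this is why purely quadratic $\Phi$ is not sufficient and higher-degree combinations must be mixed. A second delicate point is to interpret the pointwise bilinear form $Q[u]$ coherently inside the $L^\infty$-module structure of $L^2(T\X)$: one needs the density of gradients of test functions in $L^2(T\X)$, the tensorial framework of \cite{Gigli14}, and a careful passage from algebraic pointwise inequalities (valid on combinations of test functions) to an $L^2$-tensor defined on the whole module.
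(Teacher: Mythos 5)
Your proposal follows precisely the Bakry self-improvement strategy that the paper itself sketches before the theorem (and which Gigli's \cite{Gigli14} and Savar\'e's \cite{Savare13} carry out): substitute polynomial compositions $\Phi(f_1,\ldots,f_n)$ of test functions into the weak Bochner inequality, expand via the chain and Leibniz rules, vary the coefficients of $\Phi$ to force a pointwise bound on the quadratic form $Q[u]$, and identify $Q[u]$ with the Hessian inside Gigli's $L^2$-module calculus, yielding both $u\in W^{2,2}(\X)$ and \eqref{eq:improved bochner mms}. This is essentially the same route as the cited proof; your concluding cautions about needing higher-than-quadratic polynomials and about handling the tensor-module structure correctly flag exactly the technical pressure points of that argument.
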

The definitions of the space $W^{2,2}(\X)$ and of the Hessian operator are also due to \cite{Gigli14} and are far from trivial to state. To avoid introducing lengthy technical details we omit the definition here and refer to \cite{GP20} for the details. Very roughly speaking a function is in $W^{2,2}(\X)$ provided there exists  a bilinear map $\H u: L^2(T\X)^2 \to L^0(\mm)$  satisfying a suitable integration by parts formula against functions in $\test(\X)$.  The function $|\H u|_{HS}\in L^2(\mm)$ instead denotes, formally, the Hilbert-Schmidt norm of this operator.

Integrating by parts \eqref{eq:improved bochner mms} and arguing by approximation it is possible to deduce the following Laplacian-regularity result, extending the metric setting inequality \eqref{eq:2calderon smooth}.
\begin{cor}[\hspace{1pt}{\cite[Corollary 3.3.9]{Gigli14}}]\label{thm:lapl reg}
    Let $\Xdm$ be an $\RCD(K,\infty)$ space, $K\in \rr$, and  $u\in \W(\X)$ be such that $\Delta u \in L^2(\mm).$ Then $u \in  W^{2,2}(\X)$ and 
    \begin{equation}\label{eq:2calderon nonsmooth}
        \int_{\X}  |\H u|^2\, \d \mm\le \int_{\X} (\Delta u)^2-K |D u|^2 \, \d \mm, \quad \forall u \in \dom(\Delta).
    \end{equation}
\end{cor}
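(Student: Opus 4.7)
The plan is to first establish \eqref{eq:2calderon nonsmooth} for $u\in\test(\X)$ by integrating the improved Bochner inequality \eqref{eq:improved bochner mms} against a good cutoff, and then to extend to general $u\in \W(\X)$ with $\Delta u \in L^2(\mm)$ via heat flow regularization and a closure property of the Hessian.

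Fix $u\in \test(\X)$. I would plug into \eqref{eq:improved bochner mms} a sequence of cutoffs $\phi_R\in L^\infty\cap \W(\X)$ with $0\le \phi_R\le 1$, $\phi_R\equiv 1$ on $B_R(x_0)$ and $\phi_R\equiv 0$ outside $B_{2R}(x_0)$, with $|D\phi_R|\to 0$ in $L^2(\mm)$ as $R\to \infty$ (such cutoffs are available on $\RCD(K,\infty)$ spaces, thanks to the good calculus provided by the heat flow). The term on the right involving $\la\nabla u,\nabla \Delta u\ra\phi_R$ is integrated by parts using that $\Delta u\in \W(\X)$: writing
\[
\int \phi_R\la\nabla u,\nabla \Delta u\ra \d\mm =-\int \phi_R(\Delta u)^2\d\mm-\int \Delta u\,\la\nabla u,\nabla \phi_R\ra \d\mm.
\]
Since $u\in\test(\X)$ forces $|Du|^2\in \W(\X)$ and $|Du|,\Delta u\in L^\infty(\mm)$, both the leftmost term $\int \la\nabla|Du|^2,\nabla \phi_R\ra \d\mm$ and the boundary correction $\int \Delta u\,\la\nabla u,\nabla \phi_R\ra \d\mm$ can be estimated by the $L^2$-norm of $|D\phi_R|$ and hence vanish as $R\to\infty$. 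What remains is exactly \eqref{eq:2calderon nonsmooth} for $u\in\test(\X)$.

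For the general case, I would approximate $u$ via the heat semigroup, $u_t\coloneqq h_tu$ for $t>0$. Standard regularization properties in $\RCD(K,\infty)$ spaces (the Bakry--\'Emery Lipschitz smoothing of $h_t$, together with $\Delta h_tu=h_t\Delta u$) ensure that, after a further bounded perturbation if needed, $u_t\in \test(\X)$, and moreover $u_t\to u$ in $\W(\X)$ and $\Delta u_t=h_t\Delta u\to \Delta u$ in $L^2(\mm)$ as $t\to 0^+$. Applying the test-function case to each $u_t$ yields
\[
\int_\X |\H{u_t}|^2\d\mm\le \int_\X (\Delta u_t)^2-K|Du_t|^2\d\mm,
\]
so $\{\H{u_t}\}_{t>0}$ is bounded in $L^2$ and admits a weak $L^2$-limit $T$. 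Passing to the limit in the integration-by-parts identity defining $\H{u_t}$ against a fixed pair of test vector fields identifies $T$ as $\H{u}$, giving $u\in W^{2,2}(\X)$; lower semicontinuity of the $L^2$-norm under weak convergence then yields \eqref{eq:2calderon nonsmooth}.

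The main obstacle is the final closure step: making rigorous that the bilinear integration-by-parts identity which \emph{defines} the Hessian in Gigli's framework (tested against elements of $\test(\X)$ producing the tangent module $L^2(T\X)$) is stable under the joint convergence $u_t\to u$ in $\W(\X)$, $\Delta u_t\to\Delta u$ in $L^2(\mm)$, and $\H{u_t}\weakto T$ weakly in $L^2$. Handled carefully this identifies $T=\H{u}$, after which everything else is standard lower semicontinuity. The remaining minor technical point is the explicit construction of the cutoffs $\phi_R$ with $|D\phi_R|\to 0$ in $L^2$, which in the case $\mm(\X)<\infty$ is trivial (take $\phi_R\equiv 1$).
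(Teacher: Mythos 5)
Your overall plan---prove \eqref{eq:2calderon nonsmooth} for $u\in\test(\X)$ by testing the improved Bochner inequality \eqref{eq:improved bochner mms} against cutoffs and integrating the term $\la\nabla u,\nabla\Delta u\ra$ by parts, then extend by heat-flow regularisation and a weak-closure argument for the Hessian---is the correct one and matches the structure of the cited proof in \cite{Gigli14}. The closure step is rightly flagged as the delicate point, and your outline works: strong $W^{1,2}$-convergence of $u_t\to u$ lets you pass to the limit on the right-hand side of the integration-by-parts identity defining $\H{u_t}$ (tested against a fixed triple of test objects), while a weak $L^2$-subsequential limit $T$ of $\H{u_t}$ handles the left-hand side, identifying $T=\H{u}$ and hence $u\in W^{2,2}(\X)$; lower semicontinuity of the $L^2$-norm gives \eqref{eq:2calderon nonsmooth}.

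There is, however, a real gap in the cutoff step which you dismiss as minor. On $\RCD(K,\infty)$ spaces with $K<0$ and $\mm(\X)=\infty$ the volume may grow like $e^{CR^2}$ (this is exactly the growth permitted by Definition \ref{def:rcd}(3)), and then cutoffs $\phi_R$ with $\phi_R\equiv 1$ on $B_R$, $\phi_R\equiv 0$ outside $B_{2R}$, and $\|D\phi_R\|_{L^2(\mm)}\to 0$ simply do not exist. For instance on $(\rr,|\cdot|,e^{x^2/2}\,\d x)$, which is an $\RCD(-1,\infty)$ space, a one-dimensional variational computation shows that any Lipschitz $\phi$ with $\phi\equiv1$ on $[-R,R]$ and decaying to $0$ satisfies $\int_\rr(\phi')^2 e^{x^2/2}\,\d x\ge 2\big(\int_R^\infty e^{-s^2/2}\,\d s\big)^{-1}\to\infty$ as $R\to\infty$. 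Heat-flow calculus does not help produce such cutoffs. The argument can be repaired, but the mechanism is different from the one you invoke: take the plain distance cutoff $\phi_R=\big((2-\sfd(\cdot,x_0)/R)^+\big)\wedge 1$, so that $|D\phi_R|\le 1/R$ pointwise with support in $B_{2R}\setminus B_R$, and observe that for $u\in\test(\X)$ the vector fields paired with $\nabla\phi_R$ in your two error terms---namely $\nabla|Du|^2=2|Du|\,\nabla|Du|$ and $\Delta u\,\nabla u$---are in $L^1(\mm)$, each being a product of two $L^2(\mm)$ factors. The error terms are then $O(R^{-1})$ and vanish, by the $L^\infty$ decay of $|D\phi_R|$ against an $L^1$ integrand, not by $L^2$ decay of $|D\phi_R|$, which is unavailable.
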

Recall that if $\Omega \subset \X$ is open and convex, then  $(\overline \Omega,\sfd\restr \Omega,\mm\restr\Omega) $ is an $\RCD(K,\infty)$ space (see \cite[Theorem 7.2]{AMS16}). Hence \eqref{eq:2calderon nonsmooth} applied to $(\overline \Omega,\sfd\restr \Omega,\mm\restr\Omega) $  can be re-read as a \textit{boundary regularity} result for functions $u\in \W(\Omega)$  with Neumann boundary conditions (recall Definition \ref{def:neu}).  It it thus natural to wonder whether there exist non-convex sets in RCD spaces that admits  boundary second-order regularity estimates.

\begin{open}
    Find sufficient regularity properties (besides convexity) for  an open subset $\Omega\subsetneq\X$  of an $\RCD(K,N)$ space $\Xdm$ so that the following holds
    \[
     \int_{\Omega}  |\H u|^2\, \d \mm\le C_{\Omega} \int_{\X} (\Delta u)^2+|D u|^2 \, \d \mm, \quad \text{ for all $u\in \W_0(\Omega)$}
    \]
    (or for all $u\in \W(\Omega)$  with Neumann boundary conditions), where $C_\Omega$ is a constant depending only on $\Omega.$
\end{open}

\subsection{Lipschitz regularity}
As we mentioned in Section \ref{sec:holder} in an arbitrary locally doubling space supporting Poincaré inequality the best regularity result we can hope for harmonic functions is H\"older continuity. Nevertheless on  locally doubling $\RCD(K,\infty)$ they are Lipschitz. 
This can be seen at a formal level from  the Bochner inequality. Indeed if we assume that $\Delta u=0$ and that ${\rm Ric}_g\ge K$ we obtain that
\[
\frac12\Delta |D u|^2\ge K|D u|^2.
\]
Thus $|D u|^2$ is a subsolution and so  we can apply the Moser's iteration scheme to deduce that $|D u|^2$ is locally bounded (recall Theorem \ref{thm:sup harnack}). 

In fact a more general result holds saying that local Lipschitzianity is obtained as soon as the Laplacian is sufficiently integrable.
\begin{theorem}\label{thm:lip delta}
    Let $\Xdm$ be a locally doubling $\RCD(K,\infty)$ space and let $s>1$ satisfy \eqref{eq:d dim}. Suppose that 
    \[
    \Delta u=f, \quad \text{in $\Omega$,}
    \]
    for some $f \in L^q_{\loc}(\Omega)$ with $q>s $. Then $u$ is locally Lipschitz in $\Omega.$
\end{theorem}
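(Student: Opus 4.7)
The strategy is to show that $w:=|Du|^2$ satisfies an elliptic subsolution-type inequality with forcing controlled by $f^2$, then run Moser's iteration (as in Theorem \ref{thm:sup harnack}) to deduce that $|Du|$ is locally bounded in $\Omega$, and finally upgrade $L^\infty_\loc$ boundedness of $|Du|$ to local Lipschitzianity of $u$ via the Sobolev-to-Lipschitz property of RCD spaces.

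The starting point is the improved Bochner inequality \eqref{eq:improved bochner mms}. Dropping the nonnegative Hessian term and substituting $\Delta u = f$, one obtains, for suitable nonnegative test $\phi$,
\begin{equation*}
-\int_\X \tfrac12 \la\nabla w,\nabla\phi\ra\,\d\mm \ \geq\ \int_\X \la\nabla u,\nabla f\ra\phi + Kw\phi\,\d\mm.
\end{equation*}
To handle the $\nabla f$ term when $f$ is only in $L^q_\loc$, one integrates by parts again: using the definition of the Laplacian,
\begin{equation*}
\int_\X \la\nabla u,\nabla f\ra\phi\,\d\mm = -\int_\X f^2\phi + f\la\nabla u,\nabla\phi\ra\,\d\mm,
\end{equation*}
so that
\begin{equation*}
-\int_\X \Bigl(\tfrac12\la\nabla w,\nabla\phi\ra + f\la\nabla u,\nabla\phi\ra\Bigr)\,\d\mm \ \geq\ \int_\X \bigl(Kw - f^2\bigr)\phi\,\d\mm.
\end{equation*}
Testing against $\phi = \eta^2 w^{k-1}$ for $\eta\in\LIP_{bs}(\Omega)$ and $k\geq 1$, and using $|\nabla u|\leq w^{1/2}$ together with Young's inequality to absorb the cross terms, yields the Caccioppoli-type bound
\begin{equation*}
\int \eta^2 w^{k-2}|Dw|^2\,\d\mm \ \leq\ C_k \int \bigl(|D\eta|^2+\eta^2\bigr)\bigl(w^k + f^2 w^{k-1}\bigr)\,\d\mm.
\end{equation*}

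Combining this with the local Sobolev inequality (available since $\RCD(K,\infty)$ combined with local doubling yields a local Poincar\'e inequality) produces the reverse-H\"older step driving Moser's iteration. The hypothesis $q>s$ is precisely what is needed to close the iteration, matching the threshold $q>s/p$ of Theorem \ref{thm:sup harnack} applied with $p=2$ to $w$, whose forcing is controlled by $f^2\in L^{q/2}_\loc$ with $q/2>s/2$. The iteration then delivers, on any $B_r(x)\Subset\Omega$,
\begin{equation*}
\|w\|_{L^\infty(B_{r/2}(x))} \ \leq\ C\bigl(\|w\|_{L^1(B_r(x))} + \|f\|_{L^q(B_r(x))}^2\bigr),
\end{equation*}
so $|Du|\in L^\infty_\loc(\Omega)$. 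For the final step, fix any $B\Subset\Omega$, multiply $u$ by a Lipschitz cutoff supported in a slightly larger ball, and apply the Sobolev-to-Lipschitz property (item (2) in Definition \ref{def:rcd}) to conclude that $u$ admits a locally Lipschitz representative on $\Omega$.

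The main technical obstacle is that the integration by parts used to handle $\la\nabla u,\nabla f\ra$ is only formally legal: it requires $f\in\W(\X)$, which is not part of the hypotheses. The standard workaround is to regularize via the heat semigroup, replacing $u$ with $u_t := h_t u$, whose Laplacian is $h_t f$ and has the regularity needed to apply \eqref{eq:improved bochner mms} and the subsequent integration by parts. The Caccioppoli estimates and Moser iteration are carried out uniformly in $t$ on subdomains compactly contained in $\Omega$, and then one passes to the limit $t\to 0^+$ using $L^2$-lower semicontinuity and stability of the Laplacian under the heat flow. A secondary, routine subtlety is that the test function $\phi=\eta^2 w^{k-1}$ must be truncated at a level $M$ in $w$ to lie in the admissible class for \eqref{eq:improved bochner mms}, with $M\to\infty$ passed afterwards.
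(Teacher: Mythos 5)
The paper does not actually prove this theorem: it is stated and the references \cite{Koskela-Rajala-Shanmugalingam03}, \cite{jiangpoisson}, \cite{Jiang} and \cite{Kell13} are cited for the various cases. Your proposal reproduces faithfully the scheme that underlies those references: test the improved Bochner inequality \eqref{eq:improved bochner mms} against powers of $w=|Du|^2$, integrate by parts the term $\la\nabla u,\nabla\Delta u\ra$ to trade the untouchable $\nabla f$ for $f^2$ plus a divergence-form drift $f\nabla u$ (this step is correct and uses nothing beyond $\Delta u=f$ and Leibniz), run a Moser iteration based on the resulting Caccioppoli inequality and the local Sobolev inequality of \cite{HK00}, and finish with a cutoff plus the Sobolev-to-Lipschitz property (item (2) of Definition \ref{def:rcd}). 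The exponent condition $q>s$ is exactly what closes the iteration, as you argue.

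Two places deserve a word of caution, both of which you already identify as technical points. First, the subsolution inequality you derive for $w$ has the extra drift $\div(f\nabla u)$, so Theorem \ref{thm:sup harnack} cannot be cited verbatim; what is needed is the Caccioppoli estimate you write down, from which Moser iteration runs directly, absorbing the drift by Young's inequality (since $|\nabla u|\le w^{1/2}$). This is what the literature does, and your Caccioppoli inequality is correct for $k>1$ after the truncation at level $M$ you mention. Second, the heat-flow regularization handles the issue that one needs $u\in\test(\X)$ (hence in particular $\Delta u\in W^{1,2}$) to invoke \eqref{eq:improved bochner mms}; but since the heat flow is a global operator and $u$ lives only on $\Omega$, one must first localize via a cutoff and keep track of the extra terms $2\la\nabla\eta,\nabla u\ra+u\Delta\eta$ that appear in $\Delta(\eta u)$, or else work in the framework of \cite{Jiang}/\cite{Kell13} directly, where this passage to the limit is carried out carefully. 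Your sketch is aware of this but does not spell it out; it is the genuinely delicate part of any rigorous write-up. Finally note that the prior step of establishing $u\in L^\infty_\loc(\Omega)$ (needed so that $\eta u$ has bounded gradient) follows from Theorem \ref{thm:sup harnack} applied to $\pm u$, which is immediate but worth recording. With these caveats, the approach is correct and is the expected one.
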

The harmonic case, i.e.\ $f=0$, was obtained in \cite{Koskela-Rajala-Shanmugalingam03} in the $s$-Ahlfors regular case\footnote{A m.m.s.\ is $s$-Ahlfors regular if there exists a constant $C\ge 1$ such that $$C^{-1}r^s\le \mm(B_r(x))\le C r^s$$  for all $r\in (0,\diam(\X))$ and $x\in \X.$} and extended to all $f$ in \cite{jiangpoisson} under the same assumption. The Ahlfors regularity assumption was removed in \cite{Jiang} for $f\in L^\infty(\mm)$ and then adapted to cover all $f$ in \cite{Kell13}.

The most notable example of locally doubling $\RCD(K,\infty)$ spaces are $\RCD(K,N)$ spaces with $N<\infty$, which in particular satisfy \eqref{eq:d dim} with $s=N$ thanks to the Bishop-Gromov inequality \cite{Sturm06-2}. We recall that an (equivalent) definition of $\RCD(K,N)$ space is  the same as the one of $\RCD(K,\infty)$ given in Definition \ref{def:rcd} except that we replace \eqref{eq:weak bochner mms} with the following stronger inequality:
\[
            \int_\X |D u|^2  \Delta g\d \mm \ge \int_\X \frac{(\Delta u)^2g}{N}+\la \nabla u, \nabla \Delta u\ra g + K|D u|^2 g,
\]
(see \cite{EKS15,AMS16,CM16}).

The above Lipschitz regularity result is sharp in the following  sense: it is not possible to derive $C^1$-estimates for the gradient of harmonic functions which depend only on the lower bound of the Ricci (and even sectional) curvature. This is made rigorous in the following result obtained in \cite{DPZ19}.
\begin{theorem}
    It does not exist a modulus of continuity $\omega$ such that for all $n$-dimensional Riemannian manifolds $(M,g)$ with ${\rm Sec}_g\ge 0$ and all harmonic functions $u$ in $B_1(p)\subset M$ with $\|u\|_{W^{1,2}(B_1(p))}=1$ it holds
    \[
    ||D u|(x)-|D u|(y)|\le \omega(\sfd_g(x,y)), \quad \forall x,y \in B_{\frac12}(p).
    \]
\end{theorem}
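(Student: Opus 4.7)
My plan is to argue by contradiction via a pointed Gromov-Hausdorff compactness and limit argument, producing a non-smooth limit space on which a harmonic function has a gradient that fails the prescribed modulus. Suppose toward contradiction that such a universal $\omega$ exists. I would choose a sequence of pointed smooth $n$-dimensional Riemannian manifolds $(M_k,g_k,p_k)$ with $\mathrm{Sec}_{g_k}\ge 0$ and uniform non-collapsing on $B_1(p_k)$, converging in the pointed GH-sense to an Alexandrov space $(X,\sfd,p)$ of curvature $\ge 0$ and Hausdorff dimension $n$. Such limits exist by Gromov's compactness theorem, and the lower sectional curvature bound is preserved in the limit by the stability theorem for Alexandrov spaces. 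In particular $X$ is an $\RCD(0,n)$ space, so the analytic tools of Section \ref{sec:laplacian} apply.

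Next I would pick harmonic functions $u_k$ in $B_1(p_k)$ with $\|u_k\|_{W^{1,2}(B_1(p_k))}=1$ and, after extraction, pass to a limit $u_\infty$ that is harmonic in $B_1(p)\subset X$ with unit $W^{1,2}$-norm; the stability of solutions to the Poisson equation under pmGH convergence of $\RCD$ spaces, together with the local Lipschitz bound of Theorem \ref{thm:lip delta}, supplies locally uniform convergence of $u_k$ and a uniform $L^\infty$ bound on $|D u_k|$. The assumed inequality $||D u_k|(x)-|D u_k|(y)|\le \omega(\sfd_{g_k}(x,y))$ makes $\{|D u_k|\}$ equicontinuous; combined with the $L^\infty$ bound and the GH-convergence of the base points, this yields uniform convergence $|D u_k|\to |D u_\infty|$ on $B_{1/2}(p)$, and the modulus passes to the limit: $||D u_\infty|(x)-|D u_\infty|(y)|\le \omega(\sfd(x,y))$. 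To close the argument I would arrange the sequence so that $X$ possesses a genuine singular point (such as the vertex of a Euclidean cone over a sphere of small radius, or an orbifold point $\R^n/\Gamma$) at which one can exhibit, by separation of variables, a harmonic function whose gradient is discontinuous—directly contradicting the inherited modulus.

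The main obstacle is the construction of the bad limit: while GH-limits of $\mathrm{Sec}\ge 0$ manifolds are known to be singular Alexandrov spaces (for instance via Perelman's stability), one must identify a specific singular $X$ that is realizable as a non-collapsed limit of smooth $n$-manifolds of nonnegative sectional curvature \emph{and} on which some harmonic function has a discontinuous gradient at the singular point. This is delicate because the $\mathrm{Sec}\ge 0$ hypothesis is far more rigid than $\mathrm{Ric}\ge 0$, so the class of admissible limits is correspondingly restricted; one must exploit a specific family (e.g.\ quotients or carefully smoothed cones) where the explicit harmonic function can be written down. A secondary technical point is ensuring that the family $\{|D u_k|\}$ admits a well-defined limit on $X$—here the Lipschitz estimate of Theorem \ref{thm:lip delta}, combined with the assumed equicontinuity, does the job. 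Once the singular model is in hand, the contradiction with $\omega$ is immediate.
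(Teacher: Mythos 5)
Your general framework (contradiction via pointed GH compactness, stability of harmonic functions under non-collapsed convergence, passing the modulus to the limit) is sound and does enter the actual argument of De Philippis and N\'u\~nez-Zimbr\'on. However, the step you rely on to close the contradiction cannot work, and the gap is not a technicality.

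You propose to exhibit a singular non-collapsed limit $X$ of $\mathrm{Sec}\ge 0$ manifolds together with a harmonic function whose gradient is \emph{discontinuous} at the singular point. This is impossible, and indeed the main \emph{positive} result of the very same paper \cite{DPZ19} is the obstruction: on a non-collapsed $\RCD(K,N)$ space, the gradient of any harmonic function tends to zero at every point whose tangent cone is not $\R^N$. In particular, on a two-dimensional flat cone $C_\alpha$ of total angle $2\pi\alpha$, $\alpha<1$, separation of variables gives $u=r^{k/\alpha}\cos(k\theta/\alpha)$ with $|\nabla u|\sim r^{k/\alpha-1}$; since $k/\alpha>1$ for every admissible $k\ge 1$, this always vanishes continuously at the vertex. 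The same happens for cones over small round spheres in higher dimensions and for orbifold points $\R^n/\Gamma$: $\mathrm{Sec}\ge 0$ forces the relevant exponent to be strictly positive, so $|\nabla u|$ is H\"older continuous, not discontinuous. There simply is no ``bad'' singular model of the kind your argument needs.

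The actual source of non-uniformity is a one-parameter family of such cones, not a single one. For $\alpha\to 1^-$ the exponent $\beta_\alpha:=1/\alpha-1$ tends to $0^+$, so on $C_\alpha$ the (normalized) model harmonic function has $|\nabla u|(x)\approx c\, d(x,o)^{\beta_\alpha}$ with $c>0$ bounded away from zero and $|\nabla u|(o)=0$. Each $C_\alpha$ is a non-collapsed GH limit of smooth surfaces (or, after taking products, of $n$-manifolds) with $\mathrm{Sec}\ge 0$; the stability argument you correctly outline transports a hypothetical universal modulus $\omega$ to the limit, giving $c\, r^{\beta_\alpha}\le\omega(r)$ for all $r\in(0,1/2)$ and all $\alpha$ close to $1$. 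Letting $\alpha\to1^-$ forces $\omega(r)\ge c$ for every $r\in(0,1/2)$, contradicting $\omega(0^+)=0$. So the contradiction comes from the H\"older exponent degenerating along the family, not from a discontinuity on a single limit. I would advise you to replace the final step of your plan with this family argument; the compactness/stability machinery you set up is then used exactly as you intended, just applied to each cone $C_\alpha$ separately.
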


More recently it was shown in \cite{PZZ22} that also the a version of the classical Weyl's lemma  holds in $\RCD(K,N)$ spaces. In other words the local Lipschitzianity holds for an even weaker notion of harmonic functions.
\begin{theorem}
    Let $\Xdm$ be an $\RCD(K,N)$ space with $N<\infty$ and  $\Omega \subset \X$ be open. Suppose that $u \in L^1_{\loc}(\Omega)$ satisfies 
    \[
    \int_{\Omega} u \Delta \phi =0, \quad \forall \phi \in \LIP_{bs}(\Omega)\cap \test(\X). 
    \]
    Then $u \in \LIP_{\loc}(\Omega)$.
\end{theorem}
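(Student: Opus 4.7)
My plan is to regularize $u$ via the heat semigroup, use the hypothesis to show the regularization is nearly harmonic on compactly contained subdomains, apply Theorem~\ref{thm:lip delta} to obtain uniform Lipschitz bounds independent of the regularization parameter, and then pass to the limit.

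\textbf{Setup and key identity.} It suffices to prove local Lipschitzianity on a fixed ball $B_r(x_0)$ with $B_{4r}(x_0)\subset\subset \Omega$. Choose a cutoff $\chi \in \LIP_{bs}(B_{3r}(x_0))$ with $\chi \equiv 1$ on $B_{2r}(x_0)$. Since $u \in L^1_\loc(\Omega)$, the product $\chi u$ lies in $L^1(\mm)$, so $u_t := h_t(\chi u)$ is bounded, Lipschitz, and in $\dom(\Delta)$ for every $t > 0$, thanks to the ultracontractivity and regularizing properties of the heat flow in $\RCD(K,N)$. The key computation is: for any $\psi \in \test(\X) \cap \LIP_{bs}(B_r(x_0))$, self-adjointness of $h_t$ and the commutation $h_t \Delta = \Delta h_t$ combined with the Leibniz rule yield
\[
\int u_t\,\Delta\psi\,\d\mm \;=\; \int u\,\Delta(\chi h_t\psi)\,\d\mm \;-\; \int u\bigl(h_t\psi\,\Delta\chi + 2\la\nabla\chi,\nabla h_t\psi\ra\bigr)\,\d\mm.
\]
After extending the hypothesis by density to the class of functions in $W^{1,2}\cap L^\infty(\X)$ having compact support in $\Omega$ and Laplacian in $L^2(\mm)$, the first integral vanishes since $\chi h_t\psi$ is compactly supported in $\Omega$. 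The two remaining integrals involve $u$ against quantities supported in $\{\nabla\chi \neq 0\}\subset B_{3r}(x_0)\setminus B_{2r}(x_0)$, which is at distance at least $r$ from $\overline{B_r(x_0)}$; the Gaussian upper bounds for the heat kernel on $\RCD(K,N)$ spaces imply that $|h_t\psi|$ and $|\nabla h_t\psi|$ decay like $e^{-r^2/(Ct)}$ on this set as $t\to 0^+$.

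\textbf{Uniform Lipschitz bound and passage to the limit.} The previous identity means that the distributional Laplacian of $u_t$ on $B_r(x_0)$ coincides with an $L^\infty$-function whose norm tends to zero exponentially as $t\to 0^+$. Applying Theorem~\ref{thm:lip delta} (with any $q > N$) on $B_{r/2}(x_0)$, one obtains a uniform Lipschitz estimate for $u_t$ depending only on $\|u_t\|_{L^\infty(B_r(x_0))}$, $K$, $N$, and $r$. Standard heat semigroup convergence gives $u_t \to \chi u = u$ in $L^1_\loc(B_{2r}(x_0))$ as $t \to 0^+$, and Arzela--Ascoli then allows one to extract a subsequence converging uniformly on $B_{r/2}(x_0)$ to a locally Lipschitz function agreeing with $u$ almost everywhere.

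\textbf{Main obstacle.} The delicate point is the density argument needed to enlarge the admissible class of test functions so that $\chi h_t\psi$ qualifies. As stated, $\Delta(\chi h_t\psi)$ need not lie in $W^{1,2}(\X)$, so it is not in $\test(\X)$; hence one must first show by approximation that the hypothesis upgrades from $\test(\X)\cap\LIP_{bs}(\Omega)$ to all $\phi\in W^{1,2}\cap L^\infty(\X)$ with compact support in $\Omega$ and $\Delta\phi\in L^2(\mm)$. This uses the second-order calculus on $\RCD$ spaces from \cite{Gigli14} together with sharp heat-kernel and gradient estimates. Once this is accomplished, the remaining steps are essentially automatic.
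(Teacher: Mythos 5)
The paper itself does not prove this theorem; it simply cites the reference by Peng, Zhang, and Zhu, so there is no in-paper argument to compare against. Your heat-flow mollification strategy (regularize by $h_t$, show $u_t$ is nearly $\Delta$-harmonic on a smaller ball via the exponential off-diagonal decay of the kernel, invoke the known Lipschitz estimate, pass to the limit) is the natural approach and is, I believe, close in spirit to the actual published argument. However, two steps as you have written them have genuine gaps.

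First, the "main obstacle" you identify is handled incorrectly. You propose to extend the hypothesis by density to all $\phi\in W^{1,2}\cap L^\infty(\X)$ with compact support and $\Delta\phi\in L^2(\mm)$. But for such $\phi$ the pairing $\int_\Omega u\,\Delta\phi\,\d\mm$ need not even be defined, since $u$ is only $L^1_\loc$ and $\Delta\phi$ is only $L^2$: the admissible class must at minimum require $\Delta\phi$ to be bounded on $\supp\phi$, and the approximating sequence in $\test(\X)\cap\LIP_{bs}(\Omega)$ would then have to converge with $\Delta\phi_n\to\Delta\phi$ in $L^\infty$, a nontrivial convergence that heat-flow mollification does not straightforwardly give. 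A cleaner route, which sidesteps the whole density argument, is to choose the cutoff $\chi$ itself in $\test(\X)\cap\LIP_{bs}(\Omega)$ with $|\nabla\chi|,\Delta\chi\in L^\infty$. Such cutoffs exist in $\RCD(K,N)$ spaces, and with this choice the second-order calculus of \cite{Gigli14} (in particular the fact that $\la\nabla f,\nabla g\ra\in\W(\X)$ for $f,g\in\test(\X)$ and the Leibniz rule for the Laplacian) gives directly that $\chi\,h_t\psi\in\test(\X)\cap\LIP_{bs}(\Omega)$, so no extension of the hypothesis is needed at all.

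Second, your application of Theorem \ref{thm:lip delta} requires a bound on $\|u_t\|_{L^\infty(B_r(x_0))}$ that is uniform in $t$, and you list this quantity among the dependencies without explaining why it is controlled. As $t\to 0^+$ the ultracontractivity constant of $h_t$ blows up, so $\|h_t(\chi u)\|_{L^\infty(\X)}$ is not bounded in terms of $\|\chi u\|_{L^1}$ alone. The missing step is a local boundedness estimate: since $\|u_t\|_{L^1(B_r)}$ is uniformly bounded (the heat semigroup is an $L^1$-contraction and $u_t\to\chi u$ in $L^1$) and $\|\Delta u_t\|_{L^\infty(B_r)}\to 0$ by your identity, one must invoke the sub-mean-value inequality / Harnack estimate of Theorem \ref{thm:sup harnack} to upgrade the uniform $L^1$ control to a uniform $L^\infty$ control on $B_{r/2}$ before the Lipschitz estimate can be applied with a constant independent of $t$. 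Without this interpolation step the final Arzel\`a--Ascoli argument does not run. Both gaps are repairable, but as written the proof is a correct roadmap with two unproved lemmas rather than a complete argument.
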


We conclude with the following problem.
\begin{open}
   Let $\Xdm$ be an  $\RCD(K,\infty)$ space (not necessarily locally doubling) and $\Omega \subset \X$ be open. Do harmonic functions in $\Omega$ have  a continuous representative? 
\end{open}

\subsection{Eigenfunctions}
If $\Xdm$ is inf.\ Hilbertian the compactness of the embedding $W^{1,2}(\X)\hookrightarrow L^2(\mm)$ is equivalent to the Laplacian having discrete spectrum. This means that there exists a sequence of eigenvalues 
\[
\lambda_1\le \lambda_2\le .... \le \lambda_k \to +\infty
\]
counted with multiplicity and corresponding eigenfunction $\phi_k\in \W(\X)$, i.e.\ $\Delta \phi_k=-\lambda_k \phi_k$,  which form an orthogonal base of $L^2(\mm)$. It was shown in \cite{GMS15} that $\RCD(K,\infty)$ spaces with $K>0$ or bounded have discrete spectrum.

 In  $\RCD(K,N)$ spaces, $N<\infty$,  eigenfunctions  are locally Lipschitz as can be seen combining the local boundedness of solutions (recall Theorem \ref{thm:sub harnack}) with  Theorem \ref{thm:lip delta}. More precise estimates have been given \cite{AHPT21} in the compact case.
\begin{theorem}[\hspace{1pt}{\cite[Prop.\ 7.1]{AHPT21}}]
    Let $\Xdm$ be a compact $\RCD(K,N)$ space with $\diam(\X)\le D<\infty$. Then 
    \[
    \|\phi_k\|_{L^\infty(\mm)}\le C \lambda_k^\frac{N}{4}, \quad  \| |D\phi_k|\|_{L^\infty(\mm)}\le  C\lambda_k^\frac{N+2}{4}, \quad \forall \lambda_k\ge D^{-2},
    \]
    where $C$ is  a constant depending only on $K,N$ and $D.$
\end{theorem}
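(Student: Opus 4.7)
The proof plan is to use heat semigroup techniques: one exploits the identity $\phi_k = e^{\lambda_k t} h_t \phi_k$, coming from $\Delta \phi_k = -\lambda_k \phi_k$, and then controls the heat semigroup via ultracontractivity (for the $L^\infty$ bound) and via a Bakry--Ledoux type gradient estimate (for the Lipschitz bound). Throughout we normalize $\|\phi_k\|_{L^2(\mm)} = 1$.

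For the $L^\infty$ bound, I would first recall that in an $\RCD(K,N)$ space the heat kernel $p_t$ exists and satisfies a Gaussian-type upper bound; in particular, on a compact space of diameter at most $D$, one has a diagonal estimate of the form $p_{2t}(x,x) \le C t^{-N/2}$ for all $t \in (0, D^2]$, where $C = C(K,N,D)$. This follows from the Sobolev/Nash inequality, which is available on $\RCD(K,N)$ spaces via the doubling and Poincaré properties. Writing $\phi_k(x) = e^{\lambda_k t}\int p_t(x,y)\phi_k(y)\,\d\mm(y)$, Cauchy--Schwarz together with the semigroup identity $\int p_t(x,y)^2\,\d\mm(y) = p_{2t}(x,x)$ gives
\[
|\phi_k(x)|^2 \le e^{2\lambda_k t}\, p_{2t}(x,x)\,\|\phi_k\|_{L^2(\mm)}^2 \le C\, e^{2\lambda_k t}\, t^{-N/2}.
\]
Choosing $t = 1/(2\lambda_k)$, which lies in $(0, D^2/2]$ precisely because $\lambda_k \ge D^{-2}$, yields $\|\phi_k\|_{L^\infty(\mm)} \le C\lambda_k^{N/4}$.

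For the gradient bound, the natural tool is the Bakry--Ledoux reverse Poincaré inequality, which in the $\RCD(K,\infty)$ setting takes the form
\[
2t\, e^{2Kt}\,|Dh_t f|^2 \le h_t(f^2) - (h_t f)^2\quad \alme,
\]
and is a well-known consequence of the Bochner inequality via the standard $\Gamma_2$-calculus along the heat flow $s \mapsto h_s((h_{t-s}f)^2)$. Applying this to $f = \phi_k$ and using $h_t \phi_k = e^{-\lambda_k t}\phi_k$, one obtains pointwise $\mm$-a.e.
\[
2t\, e^{2Kt}\,e^{-2\lambda_k t}|D\phi_k|^2 \le h_t(\phi_k^2) - e^{-2\lambda_k t}\phi_k^2 \le \|\phi_k\|_{L^\infty(\mm)}^2,
\]
where we have also used the contractivity $\|h_t(\phi_k^2)\|_{L^\infty(\mm)} \le \|\phi_k^2\|_{L^\infty(\mm)}$. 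Inserting the $L^\infty$ bound $\|\phi_k\|_{L^\infty(\mm)}^2 \le C\lambda_k^{N/2}$ proved above and again choosing $t = 1/(2\lambda_k)$ (legitimate thanks to $\lambda_k \ge D^{-2}$) produces
\[
|D\phi_k|^2 \le C\, \frac{e^{2\lambda_k t}}{t}\,\lambda_k^{N/2} \le C\,\lambda_k^{(N+2)/2}\quad \alme,
\]
which is the claimed estimate after taking square roots.

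The two main potential obstacles are the availability of the ultracontractive heat kernel bound $p_t(x,x) \le Ct^{-N/2}$ in the non-smooth $\RCD(K,N)$ setting and the validity of the Bakry--Ledoux reverse Poincaré inequality with the correct constants. Both are by now standard consequences of the Sobolev embedding and of the self-improved Bochner inequality of Theorem \ref{thm:improved bochner mms} respectively, so the argument is ultimately a heat-flow interpolation between two known ingredients, the delicate quantitative point being the tracking of the dependence on $N$ and on the time cut-off, which is why the hypothesis $\lambda_k \ge D^{-2}$ (ensuring $t = 1/(2\lambda_k)$ lies in the regime of applicability) enters as a threshold.
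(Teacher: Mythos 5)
Your proof is correct and follows the same heat-semigroup strategy as the cited reference \cite[Prop.~7.1]{AHPT21}: the $L^\infty$ estimate via $\phi_k = e^{\lambda_k t}h_t\phi_k$, Cauchy--Schwarz and the diagonal heat-kernel bound $p_{2t}(x,x)\le C\,t^{-N/2}$ for $t\le D^2$ (which, to have $C=C(K,N,D)$, implicitly requires the normalization $\mm(\X)=1$ in addition to $\|\phi_k\|_{L^2(\mm)}=1$), and the gradient estimate via the Bakry--Ledoux reverse Poincaré inequality, both with $t=1/(2\lambda_k)$, exactly as in the original. One small imprecision worth noting: the reverse Poincaré coefficient is $\frac{e^{2Kt}-1}{K}=2\int_0^t e^{2Ks}\,\d s$ rather than $2te^{2Kt}$; the latter is a valid lower bound only when $K\le 0$ and overshoots for $K>0$, but since both expressions are comparable to $t$ uniformly on $(0,D^2]$ with constants depending only on $K$ and $D$, the final estimate and its stated dependence on the parameters are unaffected.
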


A version of the Weyl's asymptotic formula for eigenvalues is available in the non-collapsed case. We recall that an $\RCD(K,N)$ space $\Xdm$ is called \textit{non-collapsed} if $\mm$ is a  multiple of the $N$-dimensional Hausdorff measure in $\X$ (see \cite{GDP17} and \cite{Cheeger-Colding97I}). 
\begin{theorem}[\hspace{1pt}\cite{AHT18}]
    Let $(\X,\sfd,\mathcal H^N)$ be a compact non-collapsed $\RCD(K,N)$ space. Then 
    \[
    \lim_{k\to +\infty}\frac{k}{\lambda_k^{n/2}}=\frac{\omega_n\mathcal H^n(\X)}{(2\pi)^n}.
    \]
\end{theorem}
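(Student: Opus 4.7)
The plan is to reduce the eigenvalue asymptotics to the short-time behavior of the heat trace and conclude via Karamata's Tauberian theorem. Compactness together with the spectral theorem and the existence of a jointly continuous heat kernel $p_t(x,y)$ on a compact $\RCD(K,N)$ space gives the identity
\[
\sum_{k\ge 0}e^{-\lambda_k t}=\int_\X p_t(x,x)\,\d\mathcal H^N(x),\qquad t>0.
\]
The Weyl formula is then equivalent, after Karamata, to showing
\begin{equation}\label{eq:weyl target}
(4\pi t)^{N/2}\int_\X p_t(x,x)\,\d\mathcal H^N(x)\ \xrightarrow[t\to 0^+]{}\ \mathcal H^N(\X).
\end{equation}

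The first and main step is a pointwise \emph{small-time Gaussian asymptotic} at regular points, namely $(4\pi t)^{N/2}p_t(x,x)\to 1$ as $t\to 0^+$ for $\mathcal H^N$-a.e.\ $x\in\X$. In the non-collapsed setting the regular set has full $\mathcal H^N$-measure (by the results of Mondino--Naber combined with the volume rigidity of Gigli--De Philippis), and at a regular point the rescaled spaces $(\X,\sfd/\sqrt{t},\mathcal H^N/t^{N/2},x)$ converge in pmGH sense to $\R^N$. Since the heat kernel is stable under pmGH convergence of non-collapsed $\RCD(K,N)$ spaces (the relevant stability theory is due to Ambrosio--Honda--Tewodrose and related works), the on-diagonal value $t^{N/2}p_t(x,x)$ converges to the Euclidean on-diagonal value $(4\pi)^{-N/2}$, yielding the desired pointwise limit.

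To upgrade this pointwise convergence to the integrated statement \eqref{eq:weyl target} one needs a uniform upper bound of the form $(4\pi t)^{N/2}p_t(x,x)\le C(K,N,D)$ for $t$ in a bounded interval, so that dominated convergence applies on the full-measure regular set. Such uniform Gaussian-type upper bounds follow from the Bishop--Gromov inequality (which is encoded in the $\RCD(K,N)$ hypothesis) combined with the Sturm--Sturm/Jiang--Li--Zhang heat kernel estimates. Once \eqref{eq:weyl target} is established, Karamata's theorem translates the asymptotic $\sum_k e^{-\lambda_k t}\sim \mathcal H^N(\X)(4\pi t)^{-N/2}$ into the counting function asymptotic
\[
\#\{k:\lambda_k\le \lambda\}\ \sim\ \frac{\mathcal H^N(\X)}{(4\pi)^{N/2}\Gamma(N/2+1)}\lambda^{N/2}=\frac{\omega_N\mathcal H^N(\X)}{(2\pi)^N}\lambda^{N/2},
\]
using $\omega_N=\pi^{N/2}/\Gamma(N/2+1)$. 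Evaluating at $\lambda=\lambda_k$ gives the claim.

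The main obstacle is the first step: the pointwise asymptotic of the heat kernel on the diagonal at regular points. Unlike the smooth case, one cannot appeal to Minakshisundaram--Pleijel expansions; one must genuinely rely on stability of the heat flow under pmGH convergence combined with the non-collapsed structure theory, which is precisely where the fact that $\mm=\mathcal H^N$ (as opposed to a general reference measure) becomes essential. All other pieces, namely the existence of a continuous heat kernel, Gaussian upper bounds, and the Tauberian passage, are by now standard in the $\RCD(K,N)$ framework.
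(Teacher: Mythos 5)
The paper states this theorem as a citation to \cite{AHT18} and does not include a proof of its own, so there is no in-paper argument to compare against. Your outline correctly reproduces the strategy of the cited reference: reduce to short-time heat trace asymptotics via Karamata's Tauberian theorem, establish the pointwise on-diagonal limit $(4\pi t)^{N/2}p_t(x,x)\to 1$ at $\mathcal H^N$-a.e.\ point by pmGH-stability of the heat kernel under parabolic rescaling at regular points (where the non-collapsed hypothesis forces the blow-up limit to be $\R^N$ with the correct normalization of the measure), and invoke the Gaussian upper bounds of Sturm and Jiang--Li--Zhang together with the Bishop--Gromov lower volume estimate to dominate $(4\pi t)^{N/2}p_t(\cdot,\cdot)$ uniformly and pass the limit inside the integral. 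One incidental remark: the displayed formula in the paper's statement has a typo, mixing lower-case $n$ with $N$; the exponents and constants should all be in terms of $N$, as you wrote.
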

The same result for Dirichlet eigenfunctions was shown in  \cite{ZZweyl}.  For general $\RCD$ spaces the above asymptotic does not hold. Even more in \cite{DHPW23} examples are presented where $\lambda_k $ is asymptotic to a non-integer power of $k$ or  even has a logarithmic behavior.

We conclude mentioning a result about the number of nodal domains. Recall that given an eigenfunction $\phi_k$ its nodal domains are the connected components of the set $\{\phi_k \neq 0\}$ which is well defined by continuity of $\phi_k.$ In the smooth the classical Courant nodal domain theorem \cite{courant1923allgemeiner} states that the $k$-th eigenfunctions has at most $k$-nodal domains. A weaker version of this fact was recently obtained in the non-smooth setting. 
\begin{theorem}[\hspace{1pt}\cite{DFV24}]
    Let $(\X,\sfd,\mathcal H^N)$ be a compact non-collapsed $\RCD(K,N)$ space. Then for all $k$ sufficiently big the eigenfunction $\phi_k$ as strictly less than $k$ nodal domains.
\end{theorem}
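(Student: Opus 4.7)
The plan is to follow the classical Pleijel strategy, adapting it to the non-collapsed RCD setting by combining the Weyl asymptotic stated in the preceding theorem with a Faber-Krahn-type inequality whose constant is asymptotically sharp at small volume scales.

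Denote by $\nu_k$ the number of nodal domains of $\phi_k$ and by $D_1,\dots,D_{\nu_k}$ these open sets, which are well defined since $\phi_k$ is locally Lipschitz (Theorem \ref{thm:lip delta}). For each $j$ the restriction $\phi_k|_{D_j}$ lies in $W^{1,2}_0(D_j)$, solves $\Delta u=-\lambda_k u$ in $D_j$ with zero Dirichlet condition, and has constant sign on $D_j$; hence it is a first Dirichlet eigenfunction of $D_j$ and $\lambda_1^D(D_j)=\lambda_k$.

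The heart of the argument is an asymptotic Faber-Krahn inequality on $\X$ that recovers the sharp Euclidean constant as the volume of the competitor shrinks: for every $\varepsilon>0$ there exists $v_\varepsilon>0$ such that every open set $D\subset \X$ with $\mathcal H^N(D)\le v_\varepsilon$ satisfies
\[
\lambda_1^D(D)\ge (1-\varepsilon)\, j_{N/2-1,1}^2\, \omega_N^{2/N}\,\mathcal H^N(D)^{-2/N},
\]
where $j_{N/2-1,1}$ is the first positive zero of the Bessel function $J_{N/2-1}$ and $\omega_N$ is the volume of the Euclidean unit ball. One can prove this by a P\'{o}lya--Szeg\H{o} symmetrization onto a Euclidean ball, using as the key input the L\'{e}vy--Gromov isoperimetric inequality of Cavalletti--Mondino valid in the CD$(K,N)$ setting; the $(1-\varepsilon)$ loss reflects the fact that the model isoperimetric profile is only asymptotically Euclidean at small radii when $K\neq 0$, and that these curvature corrections vanish as the competitor shrinks.

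Granted this inequality, the remainder is bookkeeping. Since $\lambda_k\to\infty$ and a non-sharp Faber-Krahn (valid throughout $\X$) already forces $\mathcal H^N(D_j)\le C\lambda_k^{-N/2}$, the asymptotic inequality above applies to every nodal domain as soon as $k$ is large enough. Summing gives
\[
\mathcal H^N(\X)=\sum_{j=1}^{\nu_k}\mathcal H^N(D_j)\ge (1-\varepsilon)^{N/2}\,j_{N/2-1,1}^N\,\omega_N\,\nu_k\,\lambda_k^{-N/2},
\]
and inserting Weyl's asymptotic $\lambda_k^{N/2}\sim k(2\pi)^N/(\omega_N\mathcal H^N(\X))$ yields
\[
\limsup_{k\to\infty}\frac{\nu_k}{k}\le \frac{(2\pi)^N}{\omega_N^{2}\, j_{N/2-1,1}^{N}}.
\]
The classical numerical fact that this Pleijel constant is strictly less than one in every integer dimension $N\ge 2$ then forces $\nu_k<k$ for all sufficiently large $k$. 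The main obstacle is the asymptotically sharp Faber-Krahn step, since it requires a delicate use of the small-scale almost Euclidean geometry (volume cone theorem, rectifiability, convergence to Euclidean tangents) that is available precisely in the non-collapsed RCD setting and is the reason the statement is restricted to this class of spaces.
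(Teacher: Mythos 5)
Your overall strategy is exactly Pleijel's argument and it is the route taken in \cite{DFV24}: identify each nodal domain as a set with first Dirichlet eigenvalue $\lambda_k$, obtain a lower bound on its volume from a Faber--Krahn inequality with asymptotically Euclidean constant, and close the argument with the Weyl law plus the classical fact that the Pleijel constant $(2\pi)^N/(\omega_N^2\, j_{N/2-1,1}^N)$ is $<1$ for $N\ge 2$. The bookkeeping you carry out from these ingredients is correct.

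The gap is in your sketch of the asymptotic Faber--Krahn inequality. Pólya--Szegő symmetrization against the Lévy--Gromov isoperimetric inequality of Cavalletti--Mondino cannot produce the sharp Euclidean constant in the small-volume limit, and the obstruction is not the ``curvature correction when $K\neq 0$'' that you invoke. Even for $K=0$, the model isoperimetric profile $I_{0,N,D}$ at small volume fraction $v$ behaves like $N v^{(N-1)/N}/D$, so for a set $E$ of volume $a$ in a space with $\mathcal H^N(\X)=V$ and $\diam\le D$ one only gets
\[
\Per(E) \gtrsim N\,\frac{V^{1/N}}{D}\,a^{(N-1)/N},
\]
whereas the Euclidean bound needs $N\omega_N^{1/N} a^{(N-1)/N}$. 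Bishop--Gromov forces $V\le \omega_N D^N$ (equality only for cones), so the Lévy--Gromov constant is generically \emph{strictly worse} than the Euclidean one, and shrinking the competitor does not cure this: the deficit is global, not a small-radius curvature artifact. This is precisely where the non-collapsed hypothesis and the $\mathcal H^N$-a.e.\ Euclidean tangent structure must enter, and you cannot outsource it to Lévy--Gromov; your own closing remark about the volume cone theorem, rectifiability and Euclidean tangents correctly names the ingredients that actually go into the Faber--Krahn step, but it is inconsistent with the claim that Lévy--Gromov symmetrization suffices. Supplying an actual argument that promotes the a.e.\ Euclidean local structure to a uniform almost-sharp Faber--Krahn inequality is the genuine technical content of the theorem, and it is missing from your proposal.
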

In \cite{DFV24} actually is shown a stronger asymptotic estimate on the number of nodal domains known as Pleijel's nodal domain theorem \cite{ple}. See also \cite{CF21} for estimates related to nodal domains and nodal sets of eigenfunctions.

\subsection{Unique continuation property}
Recall the classical unique continuation property for solutions of homogeneous second order elliptic equations in the Euclidean space
\[
L u= 0, \quad \text{in $\Omega\subset \rr^n$}.
\]
\begin{itemize}
    \item[--] \textit{Weak Unique Continuation property:} if $u$ vanishes on a ball, then $u$ is identically zero.
    \item[--] \textit{Strong Unique Continuation property:} if $u$ vanishes at infinite order at some point, then $u$ is identically zero. 
\end{itemize}
A standard result in the Euclidean setting is that the strong unique continuation hold for linear operators in divergence form $L u =\div(A(x)\nabla u)$ with Lipschitz coefficients (see \cite{garofalo1986monotonicity,aronszajn1962unique}) and can be shown by using the monotonicity property of the frequency function introduced in \cite{agmon,Alm}. The regularity on the coefficient  is also sharp \cite{plis1963non,miller1974nonunique}. 

A long standing open question is the following.

\begin{open}
    Does the weak unique continuation property hold  for harmonic functions (or eigenfunctions) in $\RCD(K,N)$ spaces?
\end{open}
In \cite{deng2023failure} it was shown that at least the strong unique continuation fails.
\begin{theorem}
 For each $N\ge 4$ there exists an $\rcd(K,N)$ space $\Xdm$ and some harmonic function on some connected set $\Omega\subset \X$ that vanishes at infinite order at some $x_0\in \X$, but which is not  identically zero.    
\end{theorem}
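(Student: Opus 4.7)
The plan is to construct $\X$ as an $\RCD(K,N)$ space with a single distinguished point $x_0$ around which the geometry is flexible enough to support harmonic functions with arbitrarily fast vanishing, while still preserving the synthetic Ricci bound. A natural starting setting is that of Euclidean cones and their warped-product generalizations, since they are among the simplest $\RCD$ spaces with singularities, the Laplace equation separates into a radial ODE and an eigenvalue problem on the cross-section, and Ketterer's cone theorem guarantees that $C(\Sigma)$ is $\RCD(0,N)$ whenever $\Sigma$ is $\RCD(N-2,N-1)$ of diameter at most $\pi$.

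First I would recall the separation of variables on a cone $C(\Sigma)=[0,R]\times\Sigma$ with the warped metric $dr^2+r^2 g_\Sigma$: any harmonic function admits a local expansion of the form $u(r,\theta)=\sum_k a_k r^{\alpha_k^+}\phi_k(\theta)$, where $(\phi_k,\lambda_k)$ are the eigenpairs of $-\Delta_\Sigma$ and $\alpha_k^+(\alpha_k^++N-2)=\lambda_k$. On a single fixed cone any such solution vanishes at the tip only to the finite order $\alpha_k^+$. The strategy to get infinite order vanishing is then to replace the single cone by a space obtained by gluing countably many annular regions, each modelled on a different cone $C(\Sigma_n)$ on scales $r\in[r_{n+1},r_n]$ with $r_n\downarrow 0$, chosen so that the first nonzero Laplace eigenvalue $\lambda_1(\Sigma_n)\to\infty$. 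The corresponding radial exponents diverge, so a carefully matched global harmonic function can be constructed whose decay on the $n$-th shell beats any prescribed polynomial, producing vanishing at $x_0$ faster than any power of $r$.

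The main obstacle is carrying out this iterated cone/gluing construction while simultaneously: (i) keeping the space $\RCD(K,N)$ throughout, which requires $\mathrm{Ric}_{\Sigma_n}\ge (N-2)g_{\Sigma_n}$ at each stage plus a synthetic Ricci bound across every gluing interface, handled by combining the cone preservation theorem with gluing results for $\RCD$ spaces along convex boundaries; (ii) producing a genuine \emph{global} solution of $\Delta u=0$ rather than a piecewise concatenation, by matching Dirichlet and Neumann data on each shell $\{r=r_n\}$, which amounts to solving an infinite sequence of coefficient-matching problems in the eigenfunction expansion; (iii) passing to the limit with control on the measure near $x_0$ and invoking stability of the $\RCD$ condition under measured Gromov-Hausdorff convergence to conclude. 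The dimensional restriction $N\ge 4$ enters because it forces $\dim \Sigma_n\ge 3$, which is the least needed to produce a family of positively-curved links with $\lambda_1(\Sigma_n)\to\infty$ subject to the rigid constraint $\mathrm{Ric}_{\Sigma_n}\ge (N-2)g_{\Sigma_n}$; in lower dimensions the rigidity of such links would prevent the eigenvalues from blowing up.
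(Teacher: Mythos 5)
The paper does not prove this theorem: it simply cites \cite{deng2023failure} and gives a one-line description of the construction, namely a \emph{weighted $4$-dimensional horn} with tip $x_0$, which produces a \emph{collapsed} $\RCD(K,N)$ space. Your morally correct intuition---make the effective cross-sectional eigenvalue blow up as $r\to 0$ so the radial decay exponent diverges---is also the engine of the Deng--Zhao construction, but the implementation you propose differs substantially and, as written, has several genuine gaps.

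First, your gluing scheme does not make sense as stated. The interface $\{r=r_{n+1}\}$ of the cone $C(\Sigma_n)$ is a scaled copy of $\Sigma_n$, while the interface of $C(\Sigma_{n+1})$ at the same radius is a scaled copy of $\Sigma_{n+1}$. Since $\Sigma_n\neq\Sigma_{n+1}$, the two boundaries to be glued are not even isometric, so there is no gluing map to speak of. To repair this you would need interpolation regions where the link metric deforms from $g_{\Sigma_n}$ to $g_{\Sigma_{n+1}}$; but then the separation of variables that underlies your whole construction of the harmonic function breaks down, the ``coefficient matching'' problem at each shell couples infinitely many modes of different bases, and you have no control on the Ricci tensor of the interpolating region (the second fundamental form hypotheses needed for $\RCD$ gluing are also not available on both sides of an annular interface, since the inner boundary of a cone annulus is concave). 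The actual construction avoids all of this by using a \emph{single} warped product $dr^2+f(r)^2 g_\Sigma$ with a fixed cross-section $\Sigma$ but a non-linear warping $f$ (a horn rather than a cone), so the link eigenfunction basis never changes and the radial ODE can be analyzed directly. The price is that a horn has Ricci tensor unbounded below near the tip; this is compensated by introducing a weight on the measure, so that the relevant $N$-Bakry--\'Emery Ricci tensor (rather than the metric Ricci tensor) is bounded below. Without that weight your construction, even if the gluing worked, would not be $\RCD(K,N)$.

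Second, your justification of the dimensional restriction $N\ge 4$ is incorrect. Already for $\dim\Sigma=2$ one can take round $2$-spheres of radius $\varepsilon_n\downarrow 0$, which satisfy $K\equiv 1/\varepsilon_n^2\ge 1$ and have $\lambda_1(\Sigma_n)=2/\varepsilon_n^2\to\infty$, so the claim that ``in lower dimensions the rigidity of such links would prevent the eigenvalues from blowing up'' is false. In the Deng--Zhao argument the constraint $N\ge 4$ arises from the interplay between the horn's warping exponent, the weight, and the $N$-Bakry--\'Emery Ricci formula for the warped product; it is not an eigenvalue-rigidity phenomenon for the cross-section.

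In short: the basic heuristic (diverging cross-sectional eigenvalues $\Rightarrow$ superpolynomial vanishing) is right, but the discrete gluing of distinct cones is not viable as a construction, and a weight on the reference measure is indispensable for the synthetic Ricci bound. You should look at the weighted-horn ansatz of \cite{deng2023failure} rather than a gluing argument.
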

The construction for $\X$ used in \cite{deng2023failure} is, roughly speaking, a  weighted 4-dimensional horn with tip $x_0$. In particular the space $\Xdm$ obtained is a \textit{collapsed $\RCD(K,N)$ space}.  
It is then reasonable to consider the above question  in the subclass of non-collapsed RCD spaces. This is further motivated by the fact that under this assumption the tangent space is always a metric cone (see \cite{GDP17} and \cite{Cheeger-Colding97II}). Indeed in Riemannian manifolds the unique continuation is usually obtained using the  almost monotonicity of the frequency function which ultimately is linked to how much the space looks conical (see e.g.\ \cite{colding1997large}). From a functional point of view the almost monotonicity of the frequency relies on $L^\infty$-estimates of the Hessian of the distance function from the identity (see e.g.\ \cite{mangoubi2013effect}). However for this usually upper and lower sectional curvature bounds are needed. In fact always in \cite[Theorem 2.4]{deng2023failure} it was shown that the weak unique continuation holds for metric spaces with sectional curvature bounded from above and below. On the other hand in $\RCD$ setting only $L^2$-estimates for the Hessian are available at the moment.

\section{Second order regularity for the \texorpdfstring{$p$}{}-Laplacian}\label{sec:plap}
In this section we will present some recent results  obtained in collaboration with Luca Benatti \cite{BV24} about the regularity of the $p$-Poisson equation
\[
\Delta_p u=f.
\]
In Section \ref{sec:laplacian} we observed that $W^{2,2}$-estimates for the Laplacian are obtained directly from the Bochner inequality and integration by parts. For the $p$-Laplacian we can perform a similar formal argument using instead the \textit{$p$-Bochner identity}:
given $u\in C^\infty_c(M)$,  whenever $|\nabla u|\neq 0$ it holds
\begin{equation}\label{eq:smooth p bochner}
\begin{split}
      \frac1p \div\bigg(|\nabla u|^{p-2}A(\nabla |\nabla u|^p)\bigg)&=\big|\nabla \big(|\nabla u|^{p-2} \nabla u\big)\big |^2+g(\nabla \Delta_pu,\nabla u)|\nabla u|^{p-2}\\
      & \quad \quad +{\rm Ric}_g(\nabla u,\nabla u)|\nabla u|^{2(p-2)},
\end{split}
\end{equation}
where $A\coloneqq \id +(p-2)\frac{\nabla u\otimes \nabla u}{|D u|^2}$ (see \cite[Prop.\ 3.1]{pVALTORTA}).  For $p=2$ we get back the usual Bochner-Weitzenb\"ock identity in \eqref{eq:smooth bochner}. Assuming ${\rm Ric}_g \ge K g$, integrating \eqref{eq:smooth p bochner} and then integrating by parts  term containing $\Delta_p u$  we obtain 
\begin{equation}\label{eq:pcalderon smooth}
    \int |\nabla \big(|\nabla u|^{p-2} \nabla u\big) |^2\,\d \vol_g \le \int (\Delta_p u)^2 -K|D u|^{2(p-1)}\d \vol_g.
\end{equation}
In other words we obtain the following regularity estimate:
\begin{equation}\label{eq:p-calderon smooth}
    \Delta_pu \in L^2(M)\implies |\nabla u|^{p-2} \nabla u \in W^{1,2}_C(TM).
\end{equation}
Note that $|\nabla u|^{p-2} \nabla u$ is precisely the non-linear vector field appearing in the definition of the $p$-Laplacian: $\Delta_p u=\div(|\nabla u|^{p-2} \nabla u)$. In particular \eqref{eq:pcalderon smooth} is the natural generalization  to the case $p\neq 2$ of the corresponding result for the Laplacian  \eqref{eq:2calderon smooth}. Nevertheless  results of the type in \eqref{eq:p-calderon smooth}  in the smooth setting have appeared only relatively recently. In \cite{CiaMaz18} it was showed that for $\Omega\subset \rr^n$ it holds  $\Delta_p u \in L^2_\loc(\Omega)$ with $u\in W^{1,p}_\loc(\Omega)$ if and only if $ |\nabla u|^{p-2} \nabla u \in W^{1,2}_\loc(\Omega)$.  For extensions to $p$-Laplacian type operators see  \cite{plap1,plap2,plap4,plap3}. Previously in \cite{Lou08} it was proved that $|\nabla u|^{p-1}\in \W_\loc(\Omega)$ assuming that $f\in L^{q}_\loc(\Omega)$ with $q>\max(2,n/p)$, which was generalized to the smooth Riemannian manifolds in \cite{benattithesis}.  For $p\in(1,3+\frac{2}{n-2})$ it is known that $p$-harmonic functions are in $W^{2,2}_\loc$ \cite{ManWeit88}, while it is a now standard older result that $p$-harmonic functions are $C^{1,\alpha}$ for all $p\in(1,\infty)$  \cite{dibenedetto_alphalocalregularityweak_1983,Evans82,Uh77,Ur68}.   See also the recent \cite{Sa22,LPZ23} for related results about $p$-harmonic functions in the Euclidean space.

In \cite{BV24} we obtained the following result in $\RCD$ spaces.
\begin{theorem}[Second order regularity for the $p$-Laplacian]\label{thm:main rcd inf}
		Let $\Xdm$ be a bounded $\rcd(0,\infty)$ space. Fix $p\in (1,3)$ and suppose that $u \in \dom(\Delta_p)$ with  $\Delta_p u \in L^2(\mm)$. Then $|D u|^{p-2}\nabla u\in H^{1,2}_{C}(T\X)$ and in particular $|D u|^{p-1}\in \W(\X)$. Moreover 
		\begin{equation}\label{eq:p-calderon intro}
			\int |\nabla (|D u|^{p-2}\nabla u)|^2\d \mm\le C_p \bigg(\|\Delta_pu\|_{L^2(\mm)}^2+\||D u|^{p-1}\|_{L^{1}(\mm)}\bigg),
		\end{equation}
		where $C_p>0$ is a constant depending only on $p$.
	\end{theorem}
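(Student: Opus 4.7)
The plan is to mimic in the RCD setting the smooth $p$-Bochner calculation of \eqref{eq:smooth p bochner}, upgrading it to an integral inequality by means of the self-improved Bochner inequality of Theorem \ref{thm:improved bochner mms}. The argument naturally splits into three stages: (i) an integral $p$-Bochner inequality for those $u$ regular enough that $\Delta u \in \W(\X)$; (ii) a pointwise identification of $|\nabla v|^2$, where $v := |D u|^{p-2}\nabla u$, in terms of $\H u$ and of $\nabla|D u|$; (iii) an approximation argument to remove the regularity assumption on $u$.

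For stage (i) I would apply \eqref{eq:improved bochner mms} with a test function of the form $\phi = G_\delta(|D u|^2)\eta$, where $G_\delta$ is a bounded, Lipschitz approximation of $t\mapsto t^{p-2}$ that removes the singularity at $t=0$, and $\eta$ is a cutoff. Expanding both sides using the Leibniz and chain rules in the $L^2$-tangent module yields, on the left, a quantity of the form $\int |D u|^{2p-4}|\nabla|D u||^2\eta\,\d\mm$ up to cutoff remainders, and on the right the Hessian term $\int |D u|^{2(p-2)}|\H u|^2\eta\,\d\mm$ together with the cross term $\int |D u|^{2(p-2)}\la\nabla u,\nabla \Delta u\ra \eta\,\d\mm$. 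The latter is integrated by parts; using the pointwise relation $\Delta_p u = |D u|^{p-2}\Delta u + (p-2)|D u|^{p-4}\H u(\nabla u,\nabla u)$, which in the RCD setting is the natural distributional identity linking $\Delta$ and $\Delta_p$, it is converted into $\int (\Delta_p u)^2 \eta\,\d\mm$ modulo lower-order terms.

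For stage (ii), a direct computation in the $L^2$-tangent bundle gives
\[
|\nabla v|^2 = |D u|^{2(p-2)}|\H u|^2 + p(p-2)|D u|^{2(p-2)}|\nabla|D u||^2,
\]
which matches exactly the combination of terms produced by stage (i). Since $K=0$ in the $\RCD(0,\infty)$ hypothesis the Ricci contribution vanishes and one concludes that $\int |\nabla v|^2 \eta\,\d\mm \leq \int (\Delta_p u)^2 \eta\,\d\mm + \text{remainder}(\nabla \eta)$. A Young-type estimate on the remainder produces the $\||D u|^{p-1}\|_{L^1(\mm)}$ contribution in \eqref{eq:p-calderon intro}; taking $\eta \nearrow 1$, which is legitimate because $\X$ is bounded and admits a globally Lipschitz function with bounded Laplacian approximating the constant $1$, yields the estimate for regular $u$.

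The main obstacle will be stage (iii), where I drop the assumption $\Delta u \in \W(\X)$ and work only with $\Delta_p u \in L^2(\mm)$. My plan is to regularize the nonlinear operator by replacing $|D\cdot|^{p-2}$ with $(|D\cdot|^2 + \delta)^{(p-2)/2}$, solve the resulting non-degenerate problem via the variational framework of Section \ref{sec:boundary}, and show that the regularized solutions $u_\delta$ enjoy enough Laplacian regularity to apply stages (i)--(ii) uniformly in $\delta$. Passing to the limit $\delta \to 0^+$ then requires $W^{1,p}$-stability of the approximation together with weak $L^2$-lower semicontinuity of the covariant derivative norm in $H^{1,2}_C(T\X)$. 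The restriction $p\in(1,3)$ should emerge from integrability constraints on quantities such as $|D u|^{2(p-1)}$: the a priori bounds coming from $u\in W^{1,p}(\X)$ and $\Delta_p u \in L^2(\mm)$ give uniform control over these terms exactly in this range.
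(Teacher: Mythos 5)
Your overall strategy is the same as the paper's: derive an a priori estimate for $\nabla(|Du|^{p-2}\nabla u)$ from the self-improved Bochner inequality of Theorem \ref{thm:improved bochner mms} on sufficiently regular functions, then regularize the degenerate operator by replacing $|D\cdot|^{p-2}$ with $(|D\cdot|^2+\eps)^{(p-2)/2}$ and pass to the limit. Your stages (i) and (ii) are essentially correct and match the paper's Step 2 (the uniform estimate). The pointwise identity $|\nabla v|^2 = |Du|^{2(p-2)}|\H u|^2 + p(p-2)|Du|^{2(p-2)}|\nabla|Du||^2$ is right, and the splitting of the cross term $\la\nabla u,\nabla\Delta u\ra$ by integration by parts and then substituting $\Delta u$ in terms of $\Delta_p u$ and $\H u(\nabla u,\nabla u)/|Du|^2$ is precisely the mechanism that makes the restriction $p\in(1,3)$ appear: after a Young inequality one needs the coefficient in front of the Hessian term, which is of the form $1-(p-2)^2$ up to $\delta$-corrections, to be positive.

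However, there is a genuine gap in stage (iii), and it is exactly the hardest point of the argument. You write ``show that the regularized solutions $u_\delta$ enjoy enough Laplacian regularity to apply stages (i)--(ii)'' as if this were a routine elliptic bootstrap. It is not. In the Euclidean or smooth Riemannian setting the regularized equation $\Delta_{p,\eps}u_\eps=f$ is uniformly elliptic and $W^{2,2}_{\loc}$-regularity of $u_\eps$ follows from the method of difference quotients. In the metric setting there is no difference-quotient method; the only general elliptic regularity available is H\"older continuity (Section \ref{sec:holder}). The variational framework of Section \ref{sec:boundary} only hands you $u_\eps\in W^{1,p}(\X)$, which is nowhere near enough to apply the improved Bochner inequality: that requires, after a density argument, that $\Delta u_\eps$ exists as an $L^2(\mm)$ function. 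So you need a separate proof that the regularized equation has a solution in $\dom(\Delta)\subset W^{2,2}(\X)$, and your proposal does not supply one.

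The paper addresses this by a two-stage fixed-point construction (its Step 3): first, for a frozen $w\in W^{1,2}(\X)$, one shows by a Banach contraction argument in $\left(\dom_0(\Delta),\|\Delta(\cdot)\|_{L^2}\right)$ that the linear non-divergence-form equation $\Delta U + (p-2)\H U(\nabla w,\nabla w)/(|\nabla w|^2+\eps)=g$ has a unique solution $U\in\dom_0(\Delta)$, the contraction estimate using precisely the $L^2$-Hessian bound \eqref{eq:2calderon nonsmooth} and the restriction $|p-2|<1$; second, one shows that the map $w\mapsto U_w$ has a fixed point by Schauder's theorem, using compactness of $\dom_0(\Delta)\hookrightarrow W^{1,2}(\X)$ and continuity of $w\mapsto U_w$. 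The resulting fixed point solves the regularized equation and lies in $\dom_0(\Delta)$, which is what your stages (i)--(ii) require. Without this, or some replacement for it, the proof does not close: one cannot simply ``regularize and apply the a priori estimate'' because the a priori estimate needs second-order regularity that is unavailable from the variational existence alone.
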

The space  $H^{1,2}_{C}(T\X)$ denotes the space of vector fields in $L^2(T\X)$ with $L^2$-Sobolev covariant derivative. We will not give the details of the definition and refer to \cite{GP20}. Roughly speaking $v\in H^{1,2}_C(T\X)$ if there exists a bilinear map $\nabla v : L^2(T\X)^2 \to L^0(\mm)$ which satisfies a suitable integration by parts formula against test functions. The function $|\nabla v|\in L^2(\mm)$ then denotes the Hilbert-Schmidt norm of this operator.

\begin{remark}\label{rmk:now22}
    We can not expect to obtain that $u \in W^{2,2}(\X)$, as this is would be false even in the Euclidean setting, see e.g.\ \cite[Remark 2.7]{CiaMaz18}.
\end{remark}

In finite dimension  Lipschitz regularity can also be obtained.
\begin{theorem}[{Lipschitz regularity for the $p$-Laplacian, \cite{BV24}}]\label{thm:lip reg plap}
    Let $\Xdm$ be a bounded $\RCD(0,N)$ space $N<\infty$ and  fix $p\in(1,3)$. Suppose that 
    \[
    \Delta_p u=f, \quad \text{in $\X$,}
    \]
    for some $f\in L^q(\X)$ for $q>{N}$. Then $u\in \LIP(\X).$
\end{theorem}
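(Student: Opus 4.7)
The plan is to combine the second-order estimate of Theorem \ref{thm:main rcd inf} with a weak anisotropic $p$-Bochner inequality in order to show that a suitable power of $|Du|$ is a subsolution of a linear, uniformly elliptic equation with $L^{q/2}$ inhomogeneity, and then to apply the Moser-type $L^\infty$-bound of Theorem \ref{thm:sup harnack}. Theorem \ref{thm:main rcd inf} already gives $|Du|^{p-1}\in W^{1,2}(\X)$, which is the minimal regularity needed to make such a weak subsolution statement meaningful.

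First I would derive, from the non-smooth self-improved $p$-Bochner inequality that drives Theorem \ref{thm:main rcd inf}, a weak differential inequality of the schematic form
\[
\int \bigl\langle A_u\,\nabla v,\nabla\phi\bigr\rangle\,\d\mm \;\le\; -c_p\int \phi\,\langle \nabla\Delta_p u,\nabla u\rangle\, |Du|^{p-2}\,\d\mm
\]
for non-negative $\phi\in \LIP_{bs}(\X)$, where $v\coloneqq |Du|^{2(p-1)}$ and $A_u\coloneqq \id + (p-2)\,\nabla u\otimes\nabla u/|Du|^2$. The tensor $A_u$ has eigenvalues $1$ and $p-1$, hence is uniformly elliptic with constants depending only on $p$, and the non-negative Ricci term is dropped thanks to $K=0$. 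Since only $f\coloneqq \Delta_p u\in L^q(\X)$ is available rather than $\nabla f$, I would then eliminate the gradient of $f$ from the right-hand side by integrating by parts using the identity $\div(|Du|^{p-2}\nabla u) = f$: this replaces $\int \phi\,\langle\nabla f,\nabla u\rangle\,|Du|^{p-2}\,\d\mm$ with $-\int f^2\phi\,\d\mm - \int f\,|Du|^{p-2}\langle\nabla u,\nabla\phi\rangle\,\d\mm$. The last summand is pointwise dominated by $|f|\,|Du|^{p-1}|\nabla\phi|$ and, via Cauchy–Schwarz, can be reabsorbed into the principal part, producing a clean weak subsolution inequality $-\div(A_u\nabla v)\le g$ with $g\lesssim f^2\in L^{q/2}(\mm)$.

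Since $\Xdm$ is locally doubling (Bishop–Gromov) and supports a local Poincaré inequality, and since $A_u$ is uniformly elliptic with constants depending only on $p$, the Moser iteration underlying Theorem \ref{thm:sup harnack} applies verbatim to the linear operator $-\div(A_u\nabla\,\cdot\,)$, with doubling exponent $s=N$ and linear exponent $2$. The resulting sharp integrability threshold is $q/2>N/2$, which is exactly the hypothesis $q>N$; covering the bounded space $\X$ by finitely many balls and combining the local $\esssup$-bounds with $v\in L^1(\mm)$ (again from Theorem \ref{thm:main rcd inf}) yields $|Du|\in L^\infty(\mm)$, and axiom~(2) of Definition \ref{def:rcd} upgrades this to a Lipschitz representative of $u$ on $\X$. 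The hardest step will be the first one: rigorously casting the weak anisotropic $p$-Bochner inequality within the $\rcd(0,N)$ formalism while carefully approximating $u$ by functions in $\test(\X)$ and handling the degenerate set $\{|Du|=0\}$; the restriction $p\in(1,3)$ reflects both the uniform ellipticity range of $A_u$ and the self-improvement step already built into Theorem \ref{thm:main rcd inf}.
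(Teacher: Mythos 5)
Your overall strategy—derive a differential inequality making a power of $|Du|$ a subsolution of a uniformly elliptic linear equation, then Moser-iterate—is the right shape, but there is a genuine gap at the first and central step. You take for granted a ``non-smooth self-improved $p$-Bochner inequality'' valid directly for the solution $u$ of $\Delta_p u = f$, but no such statement exists in the paper, and establishing it directly faces a real obstruction: $u$ need not lie in $W^{2,2}(\X)$ (Remark~\ref{rmk:now22}), so the Hessian-level manipulations needed to pass from the Bochner inequality~\eqref{eq:improved bochner mms} to the anisotropic version~\eqref{eq:smooth p bochner} cannot be carried out on $u$ itself; moreover on the degenerate set $\{|Du|=0\}$ the weight $A_u$ is not even defined. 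The way the paper's second-order estimate is actually proved is through the $\eps$-regularized operator $\Delta_{p,\eps}$: the approximants $u_\eps$ do belong to $\dom_0(\Delta)\subset W^{2,2}(\X)$ and solve the same equation with a uniformly elliptic weight $(|Du_\eps|^2+\eps)^{(p-2)/2}$, so the Bochner inequality can be legitimately tested against powers of $(|Du_\eps|\wedge M)^2+\eps$. A viable route to the Lipschitz bound must therefore run the Moser iteration on $v_\eps\coloneqq(|Du_\eps|^2+\eps)^{p-1}$, obtain an $L^\infty$-bound \emph{uniform in $\eps$}, and pass to the limit using Proposition~\ref{prop:existence of weak solutions}; approximating $u$ by arbitrary functions in $\test(\X)$ is of no help, since such approximants do not solve a useful equation. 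Incidentally the global nature of the $\eps$-regularization is precisely why Theorem~\ref{thm:lip reg plap} is stated only for globally defined solutions, which your direct-in-$u$ argument, had it worked, would not explain.

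Two further points need tightening. First, after integrating by parts the term $\langle\nabla f,\nabla u\rangle|Du|^{p-2}$ you are left with the divergence-form contribution $\int f\,|Du|^{p-2}\langle\nabla u,\nabla\phi\rangle\,\d\mm$, so the resulting inequality is \emph{not} of the clean form $-\div(A_u\nabla v)\le g$ with $g\in L^{q/2}$; it has an extra $\div$-type inhomogeneity $W\coloneqq f\,|Du|^{p-2}\nabla u$, and from the available information $W$ is merely in $L^{2q/(q+2)}$, below the $L^q$ threshold one would want for a textbook divergence-form Moser iteration. The absorption you invoke does go through, but only \emph{inside} the iteration after plugging $\phi=\eta^2 v^\beta$ and Young-splitting both pieces of $\nabla\phi$; stating it as a pointwise upgrade to a clean $L^{q/2}$ inhomogeneity is incorrect and masks exactly where $q>N$ is used. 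Second, the restriction $p\in(1,3)$ has nothing to do with uniform ellipticity of $A_u$ (its eigenvalues $1$ and $p-1$ are positive for every $p>1$); it comes from the absorption $|p-2|<1$ in the Bochner calculation (cf.\ the proof of Theorem~\ref{thm:uniform st} and of Proposition~\ref{prop:step 1}), i.e.\ from the self-improvement step, not the ellipticity of the frozen-coefficient operator.
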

Compare the above result with the corresponding one for the Laplacian in Theorem \ref{thm:lip delta}. Note in particular that Theorem \ref{thm:lip reg plap} applies only to globally defined functions. This is not a choice of presentation but rather a current limitation of the analysis of \cite{BV24}. On the other hand some local results are also obtained.
\begin{theorem}[{Lipschitz regularity for the $p$-electrostatic potential, \cite{BV24}}]\label{thm:lim reg electro}
    Under the assumptions of the previous theorem, let $\Omega\subset \X$ be open and $K\subset \Omega$ be compact. Suppose that  $u\in C(\overline{\Omega})$ is a solution of
     \begin{equation}\label{eq:electropp}
         \begin{cases}
             \Delta_p u=0, &\text{in $\Omega\setminus K$},\\
             u=0, & \text{in $\partial \Omega$},\\
              u=1, & \text{in $\partial K$}.\\
         \end{cases}
     \end{equation}
     Then $u$ is locally Lipschitz in $\Omega \setminus K.$
\end{theorem}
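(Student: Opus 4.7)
My plan is to deduce the local Lipschitz regularity from the global Theorem \ref{thm:lip reg plap} by a penalization on the whole space. Fix $x_0 \in \Omega \setminus K$ and choose $r>0$ such that $\overline{B_{3r}(x_0)} \subset \Omega \setminus K$; the goal is a Lipschitz bound for $u$ on $B_r(x_0)$.

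For each $\eps > 0$, I would consider the penalized energy on all of $\X$:
\begin{equation*}
    J_\eps(v) := \int_\X \frac{|\nabla v|^p}{p}\,\d\mm + \frac{1}{\eps}\int_K (1-v)_+^p\,\d\mm + \frac{1}{\eps}\int_{\X\setminus\Omega} v_+^p\,\d\mm.
\end{equation*}
Strict convexity of $J_\eps$ on $W^{1,p}(\X)$ coming from \eqref{eq:unif conv}, together with the direct method, yields a unique minimizer $u_\eps$. A standard truncation argument gives $0 \le u_\eps \le 1$, and the Euler--Lagrange equation reads $\Delta_p u_\eps = f_\eps$ globally on $\X$, where $f_\eps$ is a bounded function, hence in $L^q(\mm)$ for every $q > N$. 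Theorem \ref{thm:lip reg plap} then yields $u_\eps \in \LIP(\X)$, and standard penalization arguments (monotonicity plus $\Gamma$-convergence of $J_\eps$) give $u_\eps \to u$ in $W^{1,p}(\X)$, with $u$ extended by $1$ on $K$ and $0$ on $\X\setminus\Omega$. Crucially, $f_\eps \equiv 0$ on $B_{3r}(x_0)$, so every $u_\eps$ is $p$-harmonic on that ball.

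To conclude it remains to show that the family $\{u_\eps\}_\eps$ is uniformly Lipschitz on $B_r(x_0)$. This is achieved by localizing the $p$-Bochner-type argument underlying Theorem \ref{thm:lip reg plap}: on $B_{3r}(x_0)$, where $u_\eps$ is $p$-harmonic, one checks that $|\nabla u_\eps|^{p-1}$ satisfies a sub-solution inequality for a linear elliptic operator. Applying the sup estimate for sub-solutions in Theorem \ref{thm:sup harnack} on $B_{2r}(x_0)$ then gives
\begin{equation*}
    \sup_{B_r(x_0)} |\nabla u_\eps|^{p-1} \,\le\, C\,\bigl\||\nabla u_\eps|^{p-1}\bigr\|_{L^1(B_{2r}(x_0))},
\end{equation*}
with $C$ independent of $\eps$, and the right-hand side is uniformly controlled by the $W^{1,p}$-convergence $u_\eps \to u$. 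Passing to the limit yields $u \in \LIP(B_r(x_0))$, as desired.

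The main obstacle is the last step: producing the sub-solution inequality for $|\nabla u_\eps|^{p-1}$ uniformly in $\eps$. The $p$-Bochner inequality in the RCD framework is naturally formulated globally via the second-order calculus of Theorem \ref{thm:improved bochner mms} and Theorem \ref{thm:main rcd inf}; localizing it to the ball $B_{3r}(x_0)$ requires careful cutoff manipulations justified by the second-order Sobolev regularity of $u_\eps$ provided by Theorem \ref{thm:main rcd inf}. Moreover, the degeneracy of $|\nabla u_\eps|^{p-2}$ on the critical set $\{|\nabla u_\eps|=0\}$ is a further technical complication, typically handled by an additional approximation with non-degenerate $p$-Laplace-type operators before passing to the limit.
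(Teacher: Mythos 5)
Your penalization scheme and the application of the global Theorem~\ref{thm:lip reg plap} to the regularized problems $\Delta_p u_\eps = f_\eps$ are reasonable, and the observation that $u_\eps$ is $p$-harmonic on $B_{3r}(x_0)$ is correct. The proof, however, has a genuine gap precisely at the point you flag as ``the main obstacle'': you need a Lipschitz bound on $B_r(x_0)$ that is \emph{uniform in $\eps$}, since the bound from Theorem~\ref{thm:lip reg plap} degenerates as $\|f_\eps\|_{L^q}\to\infty$. You propose to obtain this by ``localizing the $p$-Bochner-type argument'' via cutoffs on the ball, but this is not an incidental technicality to be filled in --- it is exactly the step that the paper declares out of reach: immediately after Theorem~\ref{thm:lip reg plap} the text states that the theorem ``applies only to globally defined functions'' and that ``this is not a choice of presentation but rather a current limitation of the analysis of \cite{BV24}.'' Multiplying the Bochner inequality \eqref{eq:improved bochner mms} by a cutoff $\eta$ supported in $B_{3r}(x_0)$ produces error terms coupling $\nabla\eta$ with $\la\nabla u_\eps,\nabla\Delta u_\eps\ra$ and with the degenerate weight $(|\nabla u_\eps|^2+\eps)^{p-2}$, and controlling these in RCD spaces is open. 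A further issue is that the $p$-Bochner identity \eqref{eq:smooth p bochner} yields a subsolution inequality for the degenerate weighted operator $\div(|\nabla u|^{p-2}A\,\nabla\cdot)$, which is not a $p$-Laplacian-type operator, so Theorem~\ref{thm:sup harnack} does not apply to it as you assert.

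The paper's proof takes a structurally different route that avoids domain cutoffs altogether: as noted after Theorem~\ref{thm:lim reg electro}, \cite{BV24} proves local Lipschitz regularity for $p$-harmonic functions with \emph{relatively compact level sets}. For the electrostatic potential one has $\{t_1<u<t_2\}\Subset\Omega\setminus K$ for $0<t_1<t_2<1$, so one can take test functions of the form $\psi(u)$ with $\psi$ a cutoff on the value axis; by the chain rule, such test functions interact cleanly with $\Delta_p u$ and the resulting integration by parts is effectively global (no ball cutoff, no boundary terms). This exploits the structure of the solution $u$ rather than trying to localize the abstract second-order machinery, and is what makes the argument close. (There is also a secondary issue with your penalization: $\frac1\eps\int_K(1-v)_+^p$ enforces $v\ge 1$ only $\mm$-a.e.\ on $K$ rather than on a neighborhood as in Proposition~\ref{prop:electrop}, which matters if $\mm(K)=0$; but this is minor compared to the localization gap.)
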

Recall from Proposition \ref{prop:electrop} that solutions to \eqref{eq:electropp} exist under mild assumptions on $\Omega$ and $K.$   In fact in \cite{BV24} it is proved that local Lipschitz regularity holds for $p$-harmonic functions  with relatively compact level sets (which is the case for the electrostatic potential).

\begin{remark}[Results for $K<0$]\label{rmk:K negative}
    In \cite{BV24} was  shown that Theorem \ref{thm:main rcd inf}, Theorem \ref{thm:lip reg plap} and Theorem \ref{thm:lim reg electro} in fact hold also in the case $K<0$, provided we suitably restrict the range of $p$.  Here we consider only the case $K=0$, since it is simpler to state and the argument is essentially the same. In the case  $N<\infty$ the range of $p$ can further be improved. See \cite[Definition 1.1]{BV24} for the precise range of $p$.
\end{remark}



\section{Proof of second order regularity estimates for the \texorpdfstring{$p$}{p}-Laplacian}\label{sec:proof}
This last section is devoted to the proof of Theorem \ref{thm:main rcd inf}.
We start with an outline of the main steps and ideas.

The core idea  to approximate solutions of $\Delta_p u=f\in L^2$ with solutions of the regularized equation:
\begin{equation}\label{eq:regularized problem}
    \Delta_{p,\eps} u_\eps\coloneqq \div((|\nabla u_\eps|^2+\eps)^\frac{p-2}{2}\nabla u_\eps)=f
\end{equation}
by sending $\eps \to 0^+$. This is a well established method  to deal with the $p$-Laplacian in the smooth setting.
We will call the $\Delta_{p,\eps}$  operator  \textit{$(\eps,p)$-Laplacian or regularized $p$-Laplacian}. The idea is that $\Delta_{p,\eps}$ is a uniformly elliptic operator and thus we can hope that the solutions $u_\eps$ are as smooth as we wish.

The argument then follows the following scheme: 
\begin{enumerate}
    \item[\textbf{Step 1}] Show that $u_\eps \to u$ in $W^{1,p}(\X)$ as $\eps \to 0^+,$
    \item[\textbf{Step 2}]Show that  $u_\eps \in W^{2,2}$,
    \item[\textbf{Step 3}] Obtain   second order estimates for $u_\eps$ which \textit{are uniform in $\eps$},
    \item[\textbf{Step 4}] Conclude the second order regularity of   $u.$
\end{enumerate}
The most difficult step, by far, is \textbf{Step 2}. This is the opposite of the smooth setting where $u_\eps \in W^{2,2}$ is an automatic consequence of standard elliptic regularity theory. However in metric setting the general elliptic regularity theory is currently limited to  H\"older estimates (see Section \ref{sec:holder}) and thus we will need to proceed in a different way.

For brevity from now we will denote 
$$\dom_0(\Delta)\coloneqq \left\{f \in \W(\X) \ : \ \Delta f \in L^2(\mm), \, \, \int_\X u \,\d \mm=0\right\}.$$
From Theorem \ref{thm:lapl reg} we have $\dom(\Delta)\subset W^{2,2}(\X).$ We recall also the following straightforward result (see\cite[Prop.\ 5.2]{BV24} for a proof).
\begin{prop}\label{lem:norms}
		Let $\Xdm$ be a bounded $\rcd(K,\infty)$ space. Then $\dom_0(\Delta)$ endowed with the norm $\|\Delta(\cdot)\|_{L^2(\mm)}$ is a Hilbert space. Moreover, the inclusion $\left(\dom_0(\Delta),\|\Delta(\cdot)\|_{L^2(\mm)}\right)\hookrightarrow \left(\dom_0(\Delta),\|\cdot\|_{\W(\X)}\right)$ is compact.
\end{prop}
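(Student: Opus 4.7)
The strategy is to establish in sequence: (i) $\|\Delta(\cdot)\|_{L^2(\mm)}$ is a norm induced by an inner product on $\dom_0(\Delta)$; (ii) a continuous embedding $(\dom_0(\Delta),\|\Delta(\cdot)\|_{L^2(\mm)})\hookrightarrow \W(\X)$, from which completeness follows; and (iii) compactness of this inclusion by upgrading the known compactness of $\W(\X)\hookrightarrow L^2(\mm)$ on bounded $\RCD$ spaces.

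\emph{Step 1 (Inner-product norm).} The bilinear form $(u,v)\mapsto \int_\X \Delta u\,\Delta v\,\d\mm$ is symmetric, bilinear and nonnegative on $\dom_0(\Delta)$. Positive definiteness comes from the zero-mean normalization: if $\Delta u=0$, testing the weak definition of $\Delta$ with $\phi=u$ (valid since $\X$ is bounded, so Lipschitz functions are dense in $\W(\X)$) gives $\int_\X |Du|^2\,\d\mm=-\int_\X u\,\Delta u\,\d\mm=0$. Hence $u$ is $\mm$-a.e.\ constant, and the constraint $\int_\X u\,\d\mm=0$ forces $u\equiv 0$.

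\emph{Step 2 (Continuous embedding and completeness).} Since $\X$ is bounded, condition~(1) listed after inequality \eqref{eq:neu poinc} provides the Neumann--Poincaré inequality on $\Omega=\X$. For $u\in\dom_0(\Delta)$ (so $u_\X=0$) this reads $\|u\|_{L^2(\mm)}\le C\,\||Du|\|_{L^2(\mm)}$. Combining it with the integration by parts $\||Du|\|_{L^2(\mm)}^2=-\int_\X u\,\Delta u\,\d\mm$ and Cauchy--Schwarz gives
\begin{equation*}
    \|u\|_{\W(\X)}\le C'\,\|\Delta u\|_{L^2(\mm)}\qquad \forall\,u\in\dom_0(\Delta),
\end{equation*}
so the inclusion is continuous. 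If $\{u_n\}\subset\dom_0(\Delta)$ is Cauchy in $\|\Delta(\cdot)\|_{L^2}$, the estimate above makes it Cauchy in $\W(\X)$, so $u_n\to u\in\W(\X)$ with $\int u\,\d\mm=0$; simultaneously $\Delta u_n\to f$ in $L^2(\mm)$. Passing to the limit in $-\int\la\nabla u_n,\nabla\phi\ra\,\d\mm=\int\Delta u_n\,\phi\,\d\mm$ for all $\phi\in\LIP_{bs}(\X)$ identifies $\Delta u=f\in L^2(\mm)$, yielding $u\in\dom_0(\Delta)$ and $u_n\to u$ in the desired norm.

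\emph{Step 3 (Compactness).} Let $\{u_n\}\subset\dom_0(\Delta)$ satisfy $\|\Delta u_n\|_{L^2(\mm)}\le M$. By Step~2, $\{u_n\}$ is bounded in $\W(\X)$. Bounded $\RCD(K,\infty)$ spaces have discrete Laplacian spectrum by \cite{GMS15} (recalled earlier in the discussion of eigenfunctions), equivalently $\W(\X)\hookrightarrow L^2(\mm)$ is compact; hence along a subsequence $u_{n_k}\to u$ in $L^2(\mm)$. The integration by parts identity
\begin{equation*}
    \||D(u_{n_k}-u_{n_j})|\|_{L^2(\mm)}^2=-\int_\X(u_{n_k}-u_{n_j})\,\Delta(u_{n_k}-u_{n_j})\,\d\mm\le 2M\,\|u_{n_k}-u_{n_j}\|_{L^2(\mm)}
\end{equation*}
then upgrades $L^2$-convergence to $\W$-convergence of $\{u_{n_k}\}$. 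The main points are thus purely to invoke the correct already-cited ingredients—the Neumann--Poincaré inequality on bounded $\RCD$ spaces and the compact embedding $\W(\X)\hookrightarrow L^2(\mm)$—with no genuinely new estimate required.
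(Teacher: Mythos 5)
Your argument is correct and follows the route one would expect from the ingredients the paper has already introduced (Neumann--Poincar\'e inequality on bounded $\RCD$ spaces, integration by parts, discrete spectrum from \cite{GMS15}); the paper delegates the proof to \cite{BV24}, and this is the natural implementation. Two small points worth tightening: in Step 1 the choice $\phi=u$ is not literally admissible in the definition of $\Delta$ unless $u\in L^\infty(\mm)$, so one should argue by truncation/density (as you hint, $\LIP_{bs}(\X)=\LIP(\X)$ when $\X$ is bounded and Lipschitz functions are dense in $\W(\X)$, but the density must be combined with the integrability of $u\,\Delta u$); and the conclusion ``$|Du|=0$ implies $u$ constant'' uses connectedness of $\X$, which holds because $\RCD$ spaces are geodesic. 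Finally, to conclude the compactness claim cleanly it is worth noting that the $\W$-limit $u$ lies in $\dom_0(\Delta)$: since $\Delta u_{n_k}$ is bounded in $L^2(\mm)$, a subsequence converges weakly to some $g\in L^2(\mm)$, and passing to the limit in the weak formulation identifies $\Delta u=g$.
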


\subsection{Step 1: regularization of the problem}
We start defining  the regularized operator $\Delta_{p,\eps}$.
	\begin{definition}[$(\eps,p)$-Laplacian]
		Fix $p \in (1,3)$, $\eps> 0$ and let $\Xdm$ be a bounded $\rcd(K,\infty)$ space. A function $u \in W^{1,{p-1}}(\X)$ satisfies
        $\Delta_{p,\eps} u=f$ for some $f \in L^1(\mm)$ provided
		\begin{equation}\label{eq:def eps plapl}
			\int_{\X}\la (|D u|^2+\eps)^\frac{p-2}2\nabla u,\nabla \phi\ra \d \mm=-\int_\X \phi f  \d \mm, \quad \forall \phi \in \LIP_{bs}(\X),
		\end{equation}
        in which case $f$ is unique and denoted by $\Delta_{p,\eps} u.$
	\end{definition}
We define $\Delta_{p,\eps}$ for functions in $W^{1,{p-1}}(\X)$ because we want to consider also functions in $W^{1,2}(\X)$ (note that $W^{1,2}\subset W^{1,{p-1}}(\X)$ for $p\in(1,3)$ because of  property i) in Section \ref{sec:pre}). 
    
The key result is  that solutions of $\Delta_{p,\eps} u=f$ converge to solutions of $\Delta_p u=f.$
	\begin{prop} [$\eps$-approximation of the $p$-Laplacian]\label{prop:existence of weak solutions}
		Fix $p\in(1,\infty)$ and let $p'\coloneqq \frac{p}{p-1}$. Let $\Xdm$ be a bounded $\rcd(K,\infty)$ space and   $f \in L^{p'}(\mm)$ with zero mean. Then:
\begin{enumerate}[label=\roman*)]
    \item for all $\eps>0$ there exists a unique solution $u\in W^{1,p}(\X)$ to
     \begin{equation}\label{eq:pepsi}
        \begin{cases}
            \Delta_{p,\eps} u=f,& \text{in $\X$},\\
             \int_\X u_\eps \d\mm=0.
        \end{cases}
    \end{equation} 
    \item $u_\eps \to u$ in $W^{1,p}(\X)$ as $\eps \to 0^+,$ where $u$ is the unique solution of
    \begin{equation}\label{eq:pep}
        \begin{cases}
            \Delta_{p} u=f,& \text{in $\X$},\\
             \int_\X u \d\mm=0.
        \end{cases}
    \end{equation}
\end{enumerate}
	\end{prop}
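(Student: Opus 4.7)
My plan is the direct method applied to the functional
\[
F_\eps(v)\coloneqq \frac{1}{p}\int_\X (|Dv|^2+\eps)^{p/2}\,\d\mm-\int_\X fv\,\d\mm
\]
on the affine space $\mathcal{A}_0\coloneqq\{v\in W^{1,p}(\X):\int_\X v\,\d\mm=0\}$. The map $\xi\mapsto (|\xi|^2+\eps)^{p/2}$ is strictly convex on a Hilbert space for every $p\in(1,\infty)$ (direct Hessian computation), so via the infinitesimal Hilbertianity standing assumption $F_\eps$ is strictly convex on $\mathcal{A}_0$. Weak $W^{1,p}$-lower semicontinuity follows from that of the $p$-energy \cite[Prop.\ 2.1.19]{GP20} combined with $(|Dv|^2+\eps)^{p/2}\ge |Dv|^p$; coercivity on $\mathcal{A}_0$ follows from the Neumann Poincaré inequality \eqref{eq:neu poinc} with $\Omega=\X$ (available because $\diam(\X)<\infty$) together with Hölder. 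The reflexivity recorded in \eqref{eq:unif conv} then produces a unique minimizer $u_\eps\in\mathcal{A}_0$. Differentiating $t\mapsto F_\eps(u_\eps+t\phi)$ at $t=0$ yields \eqref{eq:def eps plapl} for $\phi\in\LIP_{bs}(\X)$ of zero mean; the general case follows by splitting $\phi=(\phi-\bar\phi)+\bar\phi$ and using $\int_\X f\,\d\mm=0$ and $\nabla\bar\phi=0$.

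\textbf{Part (ii): convergence to the $p$-Poisson solution.} Let $u$ be the solution of \eqref{eq:pep} supplied by Proposition \ref{prop:poisson neum} (with $\Omega=\X$). Testing minimality of $u_\eps$ against $u$ gives $F_\eps(u_\eps)\le F_\eps(u)$, and by dominated convergence $F_\eps(u)\to F_0(u)$ as $\eps\to 0^+$, so $F_\eps(u_\eps)$ is uniformly bounded from above. Combining with $(|Du_\eps|^2+\eps)^{p/2}\ge|Du_\eps|^p$, Neumann Poincaré on $u_\eps\in\mathcal{A}_0$, $\mm(\X)<\infty$ (from condition (3) in Definition \ref{def:rcd} and $\diam(\X)<\infty$), and Young's inequality, this yields the uniform bound $\|u_\eps\|_{W^{1,p}(\X)}\le C$. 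By reflexivity a subsequence satisfies $u_\eps\weakto v$ in $W^{1,p}(\X)$, hence also weakly in $L^p(\X)$, so $\int_\X f u_\eps\,\d\mm\to\int_\X fv\,\d\mm$ because $f\in L^{p'}$. A $\Gamma$-convergence style chain
\[
F_0(v)\le \liminf_{\eps\to 0}F_\eps(u_\eps)\le \limsup_{\eps\to 0}F_\eps(u_\eps)\le \lim_{\eps\to 0}F_\eps(u)=F_0(u)
\]
(first inequality by weak lower semicontinuity and $(|Du_\eps|^2+\eps)^{p/2}\ge |Du_\eps|^p$) combined with strict convexity of $F_0$ on $\mathcal{A}_0$ forces $v=u$ and convergence of the whole net. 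Forcing equality throughout gives $\int_\X (|Du_\eps|^2+\eps)^{p/2}\,\d\mm\to\int_\X|Du|^p\,\d\mm$, and sandwiching with $|Du_\eps|^p\le(|Du_\eps|^2+\eps)^{p/2}$ and the $p$-energy l.s.c.\ promotes this to $\|Du_\eps\|_{L^p}\to\|Du\|_{L^p}$. Together with $\nabla u_\eps\weakto \nabla u$ in $L^p(T\X)$, the uniform convexity of $L^p$-norms underlying \eqref{eq:unif conv} gives strong convergence $\nabla u_\eps\to \nabla u$ in $L^p(T\X)$; Neumann Poincaré applied to $u_\eps-u\in\mathcal{A}_0$ yields $\|u_\eps-u\|_p\to 0$, so $u_\eps\to u$ in $W^{1,p}(\X)$.

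\textbf{Main obstacle.} The only genuinely nonlinear step is the upgrade from weak to strong $W^{1,p}$-convergence, which is crucial to use $u_\eps$ as an approximation in the second-order estimates \eqref{eq:p-calderon intro} of Theorem \ref{thm:main rcd inf}. A direct Minty-type monotonicity argument for the vector field $\xi\mapsto(|\xi|^2+\eps)^{(p-2)/2}\xi$ is possible but technical in the metric setting, because pointwise calculations in $L^2(T\X)$ must be performed module-theoretically rather than via Euclidean calculus. The variational $\Gamma$-convergence viewpoint above sidesteps this by reducing everything to scalar energy convergence and the classical uniform convexity of $L^p$.
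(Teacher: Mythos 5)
Your variational/$\Gamma$-convergence argument is sound and is essentially the route the paper takes: the paper defers to \cite[Proposition 3.3]{BV24} for the convergence part and to the direct method (as in Proposition \ref{prop:poisson neum}) for existence and uniqueness, exactly as you propose. The global structure of part (ii) — uniform a priori bound from $F_\eps(u_\eps)\le F_\eps(u)$, weak $W^{1,p}$ compactness, the sandwich $F_0(v)\le\liminf F_\eps(u_\eps)\le\lim F_\eps(u)=F_0(u)$ forcing $v=u$, energy convergence, and upgrade to strong convergence via uniform convexity of the gradient plus Poincar\'e for the $L^p$ part — is a clean and standard way to do this and all the steps check out. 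One small imprecision in part (i): the inequality $(|Dv|^2+\eps)^{p/2}\ge |Dv|^p$ combined with l.s.c.\ of the $p$-energy only yields $\liminf_n\int(|Du_n|^2+\eps)^{p/2}\ge\int|Du|^p$, which is strictly weaker than l.s.c.\ of $F_\eps$ itself (the right-hand side should be $\int(|Du|^2+\eps)^{p/2}$). The fix is standard and does not affect the conclusion: since the integrand $\xi\mapsto(|\xi|^2+\eps)^{p/2}$ is convex and $|D\cdot|$ is convex in $v$, the functional $v\mapsto\int(|Dv|^2+\eps)^{p/2}\,\d\mm$ is convex; being strongly l.s.c.\ (Fatou along a.e.-convergent subsequences of strongly $W^{1,p}$-convergent sequences), it is weakly l.s.c.\ on the reflexive space $W^{1,p}(\X)$, which is what the direct method needs. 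Alternatively one may invoke Mazur's lemma for $\nabla u_n\weakto\nabla u$ in the $L^p$-tangent module (the same device you already use in part (ii) for the Radon--Riesz upgrade). With this replacement the proof is complete.
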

  The existence and uniqueness of solutions to both \eqref{eq:pepsi} and \eqref{eq:pep} is consequence of the direct method of calculus of variations, as discussed in Section \ref{sec:boundary} (see in particular Proposition \ref{prop:poisson neum}  and the discussion immediately before that). For the convergence part we refer to \cite[Proposition 3.3]{BV24}.

	\subsection{Step 2: Uniform a-priori estimates for regularized solutions}\label{sec:eps regularity}
	The goal of this section is to obtain second-order regularity estimates for solutions of 
	\begin{equation}\label{eq:eps poisson}
		\Delta_{p,\eps} u_\eps =f \in L^2(\mm),
	\end{equation}
    uniformly in $\eps$ and \textit{assuming that $u_\eps \in W^{2,2}(\X).$} Observe that we can not expect to obtain uniform $W^{2,2}$-estimates (recall Remark \ref{rmk:now22}). Instead we will obtain uniform Sobolev estimates on the vector field $ (|D u_\eps|^2+\eps)^\frac{p-2}{2}\nabla u_\eps.$ 

\begin{theorem}\label{thm:p calderon}\label{thm:uniform st}
     Let $\Xdm$ be an $\rcd(K,\infty)$ space and fix $u \in \dom_0(\Delta)$. 
		Then for all $\eps>0$ and  $p\in(1,3)$ we have that $v_{\eps}\coloneqq ( |D u|^2+\eps)^\frac{p-2}2\nabla u \in H^{1,2}_C(\X)$ and 
		\begin{equation}\label{eq:true unif estimate}
		      \int_{\X}|\nabla v_{\eps} |^2 +|v_{\eps}|^2\d \mm \le C_{p,K} \int_{\X} (\Delta_{p,\eps}u)^2 \d \mm + C_{p,K}\||D u|^{p-1}+\eps|D u|\|_{L^1(\mm)}.
		\end{equation}
        In particular $u_\eps \in W^{1,p}(\X)$.
\end{theorem}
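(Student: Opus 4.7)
The plan is to mimic the Cianchi--Maz'ya approach to regularity for the regularized $p$-Laplacian in the smooth setting, replacing the pointwise Bochner--Weitzenböck identity by the integrated improved Bochner inequality of Theorem \ref{thm:improved bochner mms}. Since the statement concerns $u\in\dom_0(\Delta)$ and not a priori a test function, I would first regularize via the heat semigroup by setting $u_t\coloneqq h_tu\in\test(\X)$, derive the estimate for $u_t$ with constants independent of $t$, and then let $t\to 0^+$ by Fatou's lemma, exploiting that $u_t\to u$ in $W^{2,2}(\X)$ (via Corollary \ref{thm:lapl reg}), $\Delta u_t=h_t\Delta u\to \Delta u$ in $L^2(\mm)$, and $v_\eps(u_t)\to v_\eps(u)$ in $L^2(T\X)$.

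\textbf{Algebraic identity.} Writing $g(t)\coloneqq(t+\eps)^{(p-2)/2}$ so that $v_\eps=g(|Du|^2)\nabla u$, the Leibniz rule for the covariant derivative in the $L^2(T\X)$ module combined with $\nabla|Du|^2=2\H{u}(\nabla u,\cdot)$ gives the pointwise identity
\[
|\nabla v_\eps|^2 \;=\; g^2\,|\H u|^2 \;+\; \bigl((g')^2|Du|^2+gg'\bigr)\,\bigl|\nabla|Du|^2\bigr|^2,
\]
where the second coefficient equals $\tfrac{p-2}{4}(|Du|^2+\eps)^{p-4}(p|Du|^2+2\eps)$, shares the sign of $p-2$, and is pointwise comparable -- up to a factor in $[\min(1,p/2),\max(1,p/2)]$ -- to $\tfrac{p-2}{2}(|Du|^2+\eps)^{p-3}$, which is the coefficient that will naturally appear on the Bochner side.

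\textbf{Testing Bochner and exchange with $\Delta_{p,\eps}$.} Apply \eqref{eq:improved bochner mms} to $u_t$ with nonnegative test function $\phi\coloneqq g^2=(|Du|^2+\eps)^{p-2}$ (truncated from above by $M$ in the case $p>2$, the truncation being removed at the end by monotone convergence). Since $\nabla\phi=(p-2)(|Du|^2+\eps)^{p-3}\nabla|Du|^2$, the left-hand side of Bochner produces the companion coefficient above. On the right-hand side, the key ingredient is the pointwise identity $\div(g^2\nabla u)=2g\,\Delta_{p,\eps}u-g^2\Delta u$, which after integration by parts rewrites $\int g^2\la\nabla u,\nabla\Delta u\ra$ as $\int g^2(\Delta u)^2-2\int g\Delta u\,\Delta_{p,\eps}u$, and Young's inequality then absorbs the $\int g^2(\Delta u)^2$ term against the $\Delta_{p,\eps}u$ term. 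This leaves
\[
\int g^2|\H u|^2\,\d\mm+\frac{p-2}{2}\int (|Du|^2+\eps)^{p-3}\bigl|\nabla|Du|^2\bigr|^2\,\d\mm\;\le\;\int(\Delta_{p,\eps}u)^2\,\d\mm-K\int g^2|Du|^2\,\d\mm.
\]
Combining with the algebraic identity and controlling both $\int|v_\eps|^2$ and the $K$-term by $\||Du|^{p-1}+\eps|Du|\|_{L^1(\mm)}$ (using $\mm(\X)<\infty$ together with elementary algebraic bounds such as $(|Du|^2+\eps)^{p-2}|Du|^2\le C(|Du|^{2(p-1)}+\eps^{p-1}|Du|^2)$) yields \eqref{eq:true unif estimate}. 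The inclusion $u\in W^{1,p}(\X)$ follows from the pointwise bound $|v_\eps|\ge|Du|^{p-1}$ for $p\ge 2$ and Cauchy--Schwarz, and is automatic for $p\le 2$ since $\mm$ is finite.

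\textbf{Main obstacle.} The core difficulty is the Young-type absorption matching the $\bigl|\nabla|Du|^2\bigr|^2$ coefficient appearing in $|\nabla v_\eps|^2$ with the one produced by Bochner, especially when $p<2$ where both coefficients are negative and one is forced to use the majorization $\bigl|\nabla|Du|^2\bigr|^2\le 4|Du|^2|\H u|^2$ to re-express everything in terms of $\int g^2|\H u|^2$. A naive execution of this scheme only covers the range $p\in(3/2,5/2)$; to reach the full interval $p\in(1,3)$ one is forced either to choose a sharper test function $\phi=h(|Du|^2)$ with $h$ optimized to each $p$, or to invoke a self-improved Bochner inequality in the spirit of Bakry's $\Gamma_2$-calculus, which produces a stronger quadratic lower bound adapted to the $p$-Laplacian (the natural integrand being $|\nabla(|Du|^{p-2}\nabla u)|^2$ itself, cf.\ \eqref{eq:smooth p bochner}). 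A secondary technical point is the justification of the truncation and the final passage $t\to 0^+$: this requires the continuity of the nonlinear map $u\mapsto v_\eps(u)$ from $W^{2,2}(\X)$ to $L^2(T\X)$ and the lower semicontinuity of the covariant Sobolev energy on $H^{1,2}_C(T\X)$, both of which are standard consequences of the module calculus developed in \cite{Gigli14,GP20}.
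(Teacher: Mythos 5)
Your strategy — test the improved Bochner inequality \eqref{eq:improved bochner mms} against $\phi=(|Du|^2+\eps)^{p-2}$, integrate by parts the $\nabla\Delta u$ term, and identify the quantity $\Delta_{p,\eps}u$ via the development \eqref{eq:developing} — is the same skeleton as the paper's argument, and your pointwise identity $\div(g^2\nabla u)=2g\,\Delta_{p,\eps}u-g^2\Delta u$ is a cleaner reformulation of the paper's substitution $\Delta u=\LL_{p,\eps}(u)-(p-2)\frac{\Delta_\infty u}{|Du|^2+\eps}$. Your Leibniz computation of $|\nabla v_\eps|^2$ and the resulting Bochner-derived inequality with coefficient $\tfrac{p-2}{2}(|Du|^2+\eps)^{p-3}$ are correct. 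But there are two genuine gaps.

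The main gap is the range of $p$, which you flag yourself. Your crude majorization $|\nabla|Du|^2|^2\le 4|Du|^2|\H u|^2$ produces the absorption constant $2(2-p)$ for $p<2$, which needs to be $<1$, i.e.\ $p>3/2$, and a symmetric issue appears for $p>2$ once one truncates $\phi$ by $M$ (which is not optional: it is what allows the estimate for $u\in\test(\X)$ to pass to $u\in\dom_0(\Delta)$, and without it the quantity $(|Du_t|^2+\eps)^{(p-2)/2}$ is not uniformly controlled as $t\to 0$ when $p>2$ and $|Du|\notin L^\infty$). You propose either optimizing a test function $h(|Du|^2)$ or invoking a self-improved $p$-Bochner identity as in Valtorta, but execute neither. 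The paper does neither as well: it stays with the standard test function $\phi=((|Du|\wedge M)^2+\eps)^{p-2}$ and the $\Gamma_2$-Bochner, and instead proves (in Prop.~4.3 of \cite{BV24}) a sharper pointwise bound $R\le\lambda_p|\H u|^2$ with $\lambda_p<1$ for the full range $p\in(1,3)$, obtained by decomposing $|\nabla|Du||^2$ into its component along $\nabla u$ (which is $(\Delta_\infty u)^2/|Du|^4$, the quantity appearing with a \emph{negative} sign in the remainder $R$) and its orthogonal part, so that the remainder has a hidden cancellation that the crude Cauchy--Schwarz destroys. Without this finer estimate your argument does not reach the theorem as stated.

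A second, smaller gap is the final step. You write that $\int|v_\eps|^2$ and the $K$-term are controlled by $\||Du|^{p-1}+\eps|Du|\|_{L^1(\mm)}$ "using $\mm(\X)<\infty$ together with elementary algebraic bounds"; this is not enough, since these are $L^2$-quantities and the right-hand side is an $L^1$-norm. What is needed (and what the paper uses) is the interpolation $\|v\|_{L^2(T\X)}^2\le\delta\|\nabla v\|_{L^2}^2+C_\delta\|v\|_{L^1}^2$ from \cite[Prop.~2.22]{BV24}, after which the $\|\nabla v_\eps\|_{L^2}^2$ piece is absorbed into the left-hand side. That interpolation is essential when $K<0$ and should be cited, not replaced by pointwise algebraic bounds.
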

We preliminary observe that formally
\begin{equation}\label{eq:developing}
    (|\nabla u|^2+\eps)^{\frac{p-2}2}\Delta_{p,\eps}u=\Delta u+(p-2)\frac{\H {u}(\nabla u, \nabla u)}{|\nabla u|^2+\eps}=f,
\end{equation}
(assuming that $u$ is sufficiently regular).
This motivates the following definition.
\begin{definition}[Developed $(\eps,p)$-Laplacian]
		Let $\Xdm$ be an $\RCD(K,\infty)$ space, $p\in(1,\infty)$ and $\eps> 0$. We defined the map $D_{\eps,p}: \dom_0 (\Delta) \to L^2(\mm)$ as
		\[
		D_{\eps,p} u\coloneqq \Delta u+(p-2) \frac{\H u( \nabla u, \nabla u)}{| \nabla u|^2+\eps} =\Delta u+(p-2)\frac{\Delta_\infty u}{|D u|^2+\eps}. \in L^2(\mm),
		\]
		 having set $\Delta_{\infty }u=|D u|\la \nabla |D u|,\nabla u \ra$.
	\end{definition}
The following result gives a rigorous version of \eqref{eq:developing}. The proof is a simple verification (see \cite[Lemma 4.2]{BV24} for the details).
\begin{lemma}\label{lem:develop}
		Fix $p \in(1,3)$ and $\eps>0.$
		Let  $\Xdm$ be a bounded $\rcd(K,\infty)$ space and let $u \in\ \dom_0(\Delta)$.   Then 
        \begin{equation}\label{eq:develop}
                \Delta_{p,\eps}u=(|D u|^2+\eps)^{\frac{p-2}{2}} D_{\eps,p}u, \quad \text{in $\X.$}
        \end{equation}
	\end{lemma}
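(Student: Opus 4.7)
The plan is to verify \eqref{eq:develop} as a Leibniz/chain rule identity, first on test functions (where every object is regular enough to compute directly) and then for general $u \in \dom_0(\Delta)$ by approximation, exploiting the regularization floor $\eps>0$ to control the nonlinear chain-rule term.

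Fix $\phi \in \LIP_{bs}(\X)$ and set $g\coloneqq (|D u|^2+\eps)^{(p-2)/2}$. Assume first $u \in \test(\X)$, so that $|D u|^2 \in \W(\X) \cap L^\infty(\mm)$ by the result of Savar\'e recalled in Section \ref{sec:laplacian}. Since $|D u|^2 + \eps \ge \eps > 0$, the chain rule in RCD spaces gives $g \in \W(\X) \cap L^\infty(\mm)$ with
$$\nabla g = \tfrac{p-2}{2}(|D u|^2+\eps)^{(p-4)/2}\nabla |D u|^2,$$
so $g\phi \in \W(\X) \cap L^\infty(\mm)$ is a valid test against $\Delta u$. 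Using $\nabla(g\phi)=g\nabla\phi+\phi\nabla g$ and integrating by parts,
\begin{align*}
\int_\X \la g\nabla u, \nabla\phi\ra \d\mm &= \int_\X \la \nabla u, \nabla(g\phi)\ra \d\mm - \int_\X \phi \la\nabla u,\nabla g\ra \d\mm\\
&= -\int_\X g\phi\,\Delta u\, \d\mm - \int_\X \phi\la\nabla u,\nabla g\ra \d\mm.
\end{align*}
By definition of $\Delta_\infty u$ we have $\la\nabla u,\nabla |D u|^2\ra = 2\Delta_\infty u$, so $\la\nabla u,\nabla g\ra = (p-2)\,g\,\Delta_\infty u/(|D u|^2+\eps)$. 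Plugging this in, the right-hand side becomes $-\int_\X \phi\,g\,D_{\eps,p}u\,\d\mm$, which is precisely the weak formulation of $\Delta_{p,\eps}u = g D_{\eps,p}u$ in $\X$.

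For general $u \in \dom_0(\Delta)$ I would pick an approximating sequence $u_n \in \test(\X)\cap\dom_0(\Delta)$ with $u_n \to u$ in the norm $\|\Delta(\cdot)\|_{L^2(\mm)}$ (e.g.\ heat-flow mollifications with the mean subtracted). Proposition \ref{lem:norms} then yields $u_n\to u$ in $\W(\X)$, and Corollary \ref{thm:lapl reg} applied to $u-u_n$ gives convergence in $W^{2,2}(\X)$. The identity established on test functions now passes to the limit in the weak formulation: $\Delta u_n\to\Delta u$ in $L^2$ and $g_n\to g$ pointwise $\mm$-a.e.\ with $L^q$-domination, while the subtle bilinear term is handled after normalization by writing
$$\frac{\Delta_\infty u_n}{|Du_n|^2+\eps} = \H{u_n}(\mathbf{e}_n,\mathbf{e}_n), \qquad \mathbf{e}_n \coloneqq \frac{\nabla u_n}{\sqrt{|Du_n|^2+\eps}},$$
which converges in $L^1(\mm)$ thanks to $|\mathbf e_n|\le 1$, the $L^2$-convergence $\H{u_n}\to\H{u}$, and the $L^2$-convergence $\mathbf e_n \to \mathbf e$ (the latter following from the Lipschitzianity of $v\mapsto v/\sqrt{|v|^2+\eps}$ on the tangent module).

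The main technical obstacle, which forces the two-step approach, is that for a general $u\in\dom_0(\Delta)$ one does not know a priori that $|D u|^2 \in \W(\X)$, so the chain rule producing $\nabla g$ is only directly available at the test-function level. The positive floor $\eps>0$ is exactly what makes the limiting identity $\Delta_{p,\eps}u = g\,D_{\eps,p}u$ meaningful even without this extra Sobolev regularity of $|Du|^2$, since both factors on the right-hand side are controlled in $L^2(\mm)$ solely by the $W^{2,2}$-information on $u$; the restriction $p\in(1,3)$ enters only to guarantee that the various integrability requirements are compatible with the standing hypothesis $u\in\W(\X)$.
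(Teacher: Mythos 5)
Your computation at the test-function level is correct, and the approximation step can be made rigorous, but the two-step route is more elaborate than necessary and the obstacle you cite to justify it is not actually there. The paper outsources the proof to \cite[Lemma 4.2]{BV24}, describing it as ``a simple verification,'' and indeed a direct argument works for any $u\in\dom_0(\Delta)$: by Corollary \ref{thm:lapl reg} one has $u\in W^{2,2}(\X)$, hence $\nabla u\in H^{1,2}_C(T\X)$, and by the Kato-type inequality in Gigli's calculus (not the Savar\'e result on $|Du|^2$, which requires Lipschitzianity of $u$) one already knows $|Du|\in\W(\X)$ with $|\nabla|Du||\le|\H u|_{HS}$. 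The relevant quantity is therefore $|Du|$, not $|Du|^2$, and the function $\Phi(t)\coloneqq(t^2+\eps)^{(p-2)/2}$ is globally Lipschitz on $[0,\infty)$ \emph{precisely} when $p\in(1,3]$ (this is one genuine role of the restriction on $p$, which your closing paragraph characterizes too loosely as mere ``integrability''). Hence the chain rule gives $g=\Phi(|Du|)\in\W(\X)$ directly, $g\phi\in\W(\X)$ is a legitimate test for $\Delta u\in L^2$, and your integration-by-parts computation then goes through for $u$ itself with no approximation. Beyond being a detour, your limit step also has gaps you should acknowledge: you never discuss why the left side $\int\la g_n\nabla u_n,\nabla\phi\ra$ converges (for $p>2$ this requires a Vitali/equi-integrability argument, since $g_n$ is unbounded and only bounded in $L^{2/(p-2)}$), and the appeal to ``Lipschitzianity of $v\mapsto v/\sqrt{|v|^2+\eps}$ on the tangent module'' is not a well-posed statement in the $L^\infty$-module framework --- $\mathbf e_n$ is merely the scalar multiple $(|Du_n|^2+\eps)^{-1/2}\nabla u_n$, and one must estimate $|\mathbf e_n-\mathbf e|$ by splitting into the scalar and vector contributions and using dominated convergence. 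So: your argument is essentially correct but circuitous; the direct verification is both shorter and avoids these soft spots.
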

Note that $(|D u|^2+\eps)^{\frac{p-2}{2}}\in L^2(\mm)$, as $p\in(1,3)$, hence the right hand side of \eqref{eq:develop} is in $L^1(\mm)$ and thus \eqref{eq:develop} makes sense.

\begin{proof}[Proof of Theorem \ref{thm:uniform st}]
	We prove only the case $u\in \test(\X)$. The case  $u \in \dom_0(\Delta)$ can be obtain similarly by approximation (see \cite[Lemma 4.8]{BV24}).  
    Fix an arbitrary constant $M>0$ (in the case $p<2$ we take $M=+\infty$). We will prove that
\begin{equation}\label{eq:covariant estimate N=inf}
			\int_{\X}|\nabla v_{\eps,M} |^2 \d \mm\le C_{p} \int_{\X} (\Delta_{p,\eps}u)^2  + K |v_{\eps,M}|^2  \d \mm,
		\end{equation}
		where $v_{\eps,M} \coloneqq (|D u|\wedge M)^2+\eps)^\frac{p-2}2\nabla u\in H^{1,2}_C(\X)$. We first show that \eqref{eq:covariant estimate N=inf} is enough to conclude. From \eqref{eq:covariant estimate N=inf} and by interpolation (see e.g.\ \cite[Prop.\ 2.22]{BV24}) we obtain that
        \begin{align*}
               \int_{\X}|\nabla v_{\eps,M} |^2 +|v_{\eps,M}|^2\d \mm&\le C_{p,K} \int_{\X} (\Delta_{p,\eps}u)^2 \d \mm + C_{p,K}\left(\int_\X |v_{\eps,M}|  \d \mm\right)^2\\
               &\le  C_{p,K} \int_{\X} (\Delta_{p,\eps}u)^2 \d \mm + C_{p,K}\||D u|^{p-1}+\eps|D u|\|_{L^1(\mm)}.
        \end{align*}
Hence $ v_{\eps,M}$ is bounded in $L^2(T\X)$ and clearly $|v_{\eps,M}-v_\eps|\to 0$ pointwise $\mm$-a.e.. Therefore by lower semicontinuity (see e.g.\ ) we deduce that \eqref{eq:true unif estimate} holds. From \eqref{eq:true unif estimate} for $p\ge 2$ we deduce that $|D u_{\eps}|^{p-1}\in L^2(\mm)$, which implies that $|D u_\eps|\in L^p(\mm).$ If $p\le 2$ instead $|D u_\eps|\in L^p(\mm)$ trivially. From the Poincaré inequality (e.g.\ recall \eqref{eq:neu poinc})  we deduce that also $u_\eps \in L^p(\mm)$ and so $u\in W^{1,p}(\X)$ (recall property i) in Section \ref{sec:pre}). 

We pass to the proof of \eqref{eq:covariant estimate N=inf}.
We  preliminary observe that, by definition,
\begin{align*}
    \LL_{p,\eps}(u)&\coloneqq \Delta u+(p-2)\frac{\H u(\nabla u,\nabla u)}{|D u|^2+\eps}=\Delta u+(p-2)\frac{|D u|\la\nabla |D u|,\nabla u\ra}{|D u|^2+\eps}.
\end{align*}
Hence, setting $\Delta_{\infty }u=|D u|\la \nabla |D u|,\nabla u \ra$, we have
\begin{equation}\label{eq:blabb}
    \Delta u=\LL_{p,\eps}(u)-(p-2)\frac{\Delta_{\infty }u}{|D u|^2+\eps}
\end{equation}
We  start the computation by plugging in the improved Bochner inequality \eqref{eq:improved bochner mms} the test function $\phi\coloneqq (|D u|\wedge M)^2+\eps)^{p-2}$ 
    \begin{equation}\label{eq:plug in bochner}
     -2(p-2) \int|\nabla |D u||^2  \frac{|D u|^2\phi }{|D u|^2+\eps} \d\mea\ge \int |\H u|^2\phi+\la \nabla u, \nabla \Delta u\ra \phi +K|D u|^{2} \phi \d \mea. 
    \end{equation}
Note that all integrals make sense because $|D u|\in L^\infty(\mm).$ We integrate by parts the term containing the Laplacian:
\begin{align*} 
    \int \la \nabla u, \nabla \Delta u\ra \phi \d \mm&=- \int (\Delta u)^2\phi+2(p-2)\nchi_{\{|D u|\le M\}}\frac{ \Delta u\la \nabla u, \nabla |D u|\ra|D u|\phi }{|D u|^2+\eps}\d\mm \\
    &=- \int (\Delta u)^2\phi+2(p-2)\nchi_{\{|D u|\le M\}} \frac{ \Delta u\Delta_\infty u}{|D u|^2+\eps}\phi\d \mm.
\end{align*}
We now get rid of the Laplacian terms by plugging in identity \eqref{eq:blabb}
\begin{align*}
      \int \la \nabla u,&\nabla \Delta u\ra \phi \d \mm=- \int \left(\LL_{u,\eps}(u)-(p-2) \frac{\Delta_\infty(u)}{|D u|^2+\eps}\right)^2\phi\d\mm\\
      & \quad +2(p-2) \int_{\{|D u|\le M\}} \left(\LL_{u,\eps}(u)-(p-2) \frac{\Delta_\infty(u)}{|D u|^2+\eps}\right)\frac{\Delta_\infty u}{|D u|^2+\eps}\phi\d \mm.
\end{align*}
Using twice the Young's inequality we can estimate for all $\delta>0$:
\begin{align*}
    \int &\la \nabla u, \nabla \Delta u\ra \phi \d \mm \le  \int (1+\delta^{-1}) \LL_{u,\eps}(u)^2\phi+(1+\delta)\frac{(p-2)^2(\Delta_\infty u)^2}{(|D u|^2+\eps)^2}\phi\d\mm\\
    &\quad + \int_{\{|D u|\le M\}} (2(p-2))^2\delta^{-1}  \LL_{u,\eps}(u)^2 \phi +\delta  \frac{(\Delta_\infty u)^2}{(|D u|^2+\eps)^2}\phi - \frac{2(p-2)^2(\Delta_\infty u)^2}{(|D u|^2+\eps)^2}\phi \d \mm.
\end{align*}
The above combined with the pointwise inequality  $\frac{(\Delta_\infty u)}{|D u|^2} \le |\H u|$ gives
\begin{equation}\label{eq:bochner partial est}
    \begin{split}
            \int \la \nabla u, \nabla \Delta u\ra \phi \d \mm &\le C_{p,\delta} \int \LL_{u,\eps}(u)^2\phi \d \mm + \delta C_p \int |\H u|^2 \phi \d \mm\\
         &\quad \quad +(p-2)^2\int\frac{ (1-2\nchi_{\{|D u|\le M\}})(\Delta_\infty u)^2}{(|D u|^2+\eps)^2}\phi \d \mm.
    \end{split}
\end{equation}
Finally plugging in \eqref{eq:bochner partial est} in \eqref{eq:plug in bochner}  and using Lemma \ref{lem:develop} we obtain
\begin{equation}\label{eq:final bochner formal}
    \begin{split}
        &(1-\delta C_p) \int |\H u|^2\phi\d \mm \le  C_{p,\delta} \int (\Delta_{p,\eps}(u))^2 + K|D u|^2 \phi \d\mm \\
   &+\int \underbrace{\left[ (p-2)^2\int\frac{ (1-2\nchi_{\{|D u|\le M\}})(\Delta_\infty u)^2}{(|D u|^2+\eps)^2} +2(2-p)   \frac{|\nabla |D u||^2|D u|^2 }{|D u|^2+\eps}\right]}_{R\coloneqq } \phi \d\mea.
    \end{split}
\end{equation}
Suppose first that $p\ge 2$. Then the second term of the reminder term $R$ is non-positive and can be ignored. On the other hand if also $p<3$, then $|p-2|^2<1$ and we can absorb the first term of $R$ into the left hand side, provided $\delta$ is chosen small enough, obtaining directly \eqref{eq:covariant estimate N=inf}. In the case $p<2$  it still can be proved (see \cite[Prop.\ 4.3]{BV24}) that there exists a constant $\lambda_p\in(0,1)$ such that
\[
R\le \lambda_p |\H u|^2  \quad \text{$\mm$-a.e.,}
\]
which allows to absorb the reminder term $R$ into the left-hand side obtaining again \eqref{eq:covariant estimate N=inf}. 
\end{proof}

\subsection{Step 3:  regularized solutions are in \texorpdfstring{$W^{2,2}$}{W2,2}} \label{sec:existence_regularised}
In this crucial step we show that solutions to 
\begin{equation}\label{eq:peppp}
    \Delta_{p,\eps} u_\eps=f
\end{equation}
with $f \in L^2(\mm)$ are in $W^{2,2}$. To do so we will show existence of a solution in $W^{2,2}\cap W^{1,p}$ and then exploit the uniqueness result of  Proposition \ref{prop:existence of weak solutions}. We first explain our method. Taking inspiration from identity \eqref{eq:developing} we  construct a solution in two steps:
\begin{itemize}
    \item[A)]  For all $w\in \W(\X)$ we obtain  $U_w\in \dom_0(\Delta)$ solving
    \begin{equation}\label{eq:first step}
        \Delta U_w+(p-2)\frac{\H {U_w}(\nabla w, \nabla w)}{|\nabla w|^2+\eps}=\frac{f}{(|\nabla w|^2+\eps)^{\frac{p-2}2}},
    \end{equation}
    by fixed point argument.
    \item[B)] We show that there exists a fixed point $u_\eps$ of the map $w\mapsto U_w,$ which thus will solve \eqref{eq:peppp}.
\end{itemize}

We  start with step $A).$ 
\begin{definition}[Auxiliary linear operator]\label{def:auxiliary}
		Let $\Xdm$ be an $\RCD(K,\infty)$ space, $w \in \W(\X)$ and $\eps> 0$. The map $\LL_{w,\eps}: \dom_0 (\Delta) \to L^2(\mm)$ is defined by
		\[
		\LL_{w,\eps}(U)\coloneqq \Delta U+(p-2) \frac{\H U( \nabla w, \nabla w)}{| \nabla w|^2+\eps} \in L^2(\mm),
		\]
		which makes sense because $\dom_0(\Delta)\subset W^{2,2}(\X).$
	\end{definition}
	Note that the operator  $\LL_{w,\eps}$  is linear and it is obtained by freezing the nonlinear part of the expression in \eqref{eq:developing}. Observe also that
    \begin{equation}\label{eq:compatibility}
        \LL_{u,\eps}(u)=D_{\eps,p}u,
    \end{equation}
    where $D_{\eps,p}$ was defined in the previous section.
 We can not expect to solve  precisely \eqref{eq:first step}, because the $\Delta U_w$ must have zero mean in $\X$ while the other terms together might not. Instead we will solve the slightly modified equation \eqref{eq:step 1} below.
	\begin{prop}[Existence result for $\LL_{w,\eps}$]\label{prop:step 1}
		Let $\Xdm$ be a bounded $\rcd(0,\infty)$ space and fix $p \in (1,3)$. Then for all 
		$f \in L^2\cap L^{p'}(\mm)$, $w\in \W(\X)$  and $\eps>0$ there exists (unique) $U \in \dom_0(\Delta)$ that solves
		\begin{equation}\label{eq:step 1}
			\LL_{w,\eps}(U)=g-\fint_\X(g-\LL_{w,\eps}(U))\d \mm,
		\end{equation}
        where $g\coloneqq \frac{f}{(|D w |^2+\eps)^\frac{p-2}{2}}$. 
        Moreover $U$ satisfies
		\begin{equation}\label{eq:estimates solution}
			\|\Delta U\|_{L^2(\mm)}\le \frac{ \|g\|_{L^2(\mm)}}{1-|p-2|}.
		\end{equation}
	\end{prop}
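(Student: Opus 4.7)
The strategy is a Banach contraction argument on the Hilbert space $(\dom_0(\Delta),\|\Delta(\cdot)\|_{L^2(\mm)})$ supplied by Proposition \ref{lem:norms}, exploiting that $\LL_{w,\eps}$ differs from $\Delta$ only by the term $(p-2)\H U(\nabla w,\nabla w)/(|\nabla w|^2+\eps)$, whose absolute value is pointwise dominated by $|p-2|\,|\H U|$ with $|p-2|<1$ throughout the range $p\in(1,3)$. The philosophy is that $\LL_{w,\eps}$ is a small uniformly-elliptic perturbation of $\Delta$ and should therefore be invertible by a Neumann-type scheme, using Corollary \ref{thm:lapl reg} (with $K=0$) to convert pointwise control of the perturbation into operator control.

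Concretely, for each $U\in\dom_0(\Delta)$ one has $U\in W^{2,2}(\X)$ by Corollary \ref{thm:lapl reg}, so
\[
h_U\coloneqq g-(p-2)\frac{\H U(\nabla w,\nabla w)}{|\nabla w|^2+\eps}\in L^2(\mm).
\]
I would define $T(U)\in\dom_0(\Delta)$ as the unique solution of $\Delta T(U)=h_U-\fint_\X h_U\,\d\mm$, which exists because Proposition \ref{lem:norms} identifies $\Delta$ as an isometric isomorphism from $\dom_0(\Delta)$ onto the mean-zero subspace of $L^2(\mm)$. A short rearrangement using $\int_\X\Delta U\,\d\mm=0$ shows that fixed points of $T$ are exactly solutions of \eqref{eq:step 1}.

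For contractivity, subtract the defining equations for $T(U_1)$ and $T(U_2)$, use that projection onto mean-zero functions is an $L^2$-contraction, the pointwise estimate $|\H V(\nabla w,\nabla w)|\le |\H V|\,|\nabla w|^2$, and Corollary \ref{thm:lapl reg} with $K=0$ (which gives $\||\H V|\|_{L^2(\mm)}\le\|\Delta V\|_{L^2(\mm)}$) to obtain
\[
\|\Delta(T(U_1)-T(U_2))\|_{L^2(\mm)}\le |p-2|\,\|\Delta(U_1-U_2)\|_{L^2(\mm)}.
\]
Banach's fixed point theorem then yields existence and uniqueness of $U$. Applying the same chain of inequalities directly to \eqref{eq:step 1} at the fixed point yields
\[
\|\Delta U\|_{L^2(\mm)}\le \|h_U\|_{L^2(\mm)}\le \|g\|_{L^2(\mm)}+|p-2|\,\|\Delta U\|_{L^2(\mm)},
\]
and rearranging produces \eqref{eq:estimates solution}. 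The main conceptual ingredient is Gigli's Hessian estimate, which provides exactly the Hilbert structure needed to quantify the perturbation; a minor technical verification is that $g=f/(|Dw|^2+\eps)^{(p-2)/2}\in L^2(\mm)$, which is immediate from $\eps>0$ when $p\ge 2$ and requires a Hölder argument with $f\in L^{p'}(\mm)$ and $|Dw|\in L^2(\mm)$ when $p<2$ (using that $\mm$ is finite since $\X$ is bounded).
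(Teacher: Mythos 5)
Your proof is correct and follows essentially the same route as the paper's: you set up the Banach contraction on the Hilbert space $(\dom_0(\Delta),\|\Delta(\cdot)\|_{L^2(\mm)})$, use the pointwise bound $|\H V(\nabla w,\nabla w)|\le|\H V||\nabla w|^2$ together with Corollary~\ref{thm:lapl reg} (with $K=0$) to get the contraction constant $|p-2|<1$, observe that the fixed point solves \eqref{eq:step 1}, and obtain \eqref{eq:estimates solution} by rerunning the same inequalities at the fixed point. Your map $T$ agrees with the paper's $T_{f,w}$ after expanding $\LL_{w,\eps}(V)=\Delta V+(p-2)\H V(\nabla w,\nabla w)/(|\nabla w|^2+\eps)$ and cancelling $\Delta V$.

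Two small remarks. First, the surjectivity of $\Delta$ onto the mean-zero subspace of $L^2(\mm)$, which you attribute to Proposition~\ref{lem:norms}, is actually supplied by Proposition~\ref{prop:poisson neum} (Neumann Poisson equation); Proposition~\ref{lem:norms} only provides the Hilbert-space structure and compactness of the embedding. Second, and more substantively, your closing claim that $g\in L^2(\mm)$ for $p<2$ ``requires a Hölder argument with $f\in L^{p'}(\mm)$ and $|Dw|\in L^2(\mm)$'' is not correct: one would need $\int f^2|Dw|^{2(2-p)}<\infty$, and Hölder with $|Dw|^{2(2-p)}\in L^{1/(2-p)}$ forces $f\in L^{2/(p-1)}$, whereas $p'=p/(p-1)<2/(p-1)$ when $p<2$, so the assumption $f\in L^{p'}\cap L^2$ is insufficient. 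This membership is in fact what can fail, and it is the reason the paper introduces the cut-off $(|\nabla w|\wedge M)^2+\eps$ in the proof of Theorem~\ref{thm:regularity epsilon} rather than applying Proposition~\ref{prop:step 1} directly for $p<2$. The core contraction argument is unaffected; just replace the incorrect Hölder remark by an explicit hypothesis (or cut-off ensuring) $g\in L^2(\mm)$.
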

	\begin{proof}
 Define the map $$T_{f,w}: \dom_0(\Delta) \to \dom_0(\Delta)$$ given by $T_{f,w}(w)\coloneqq U,$ where $U$ is the solution to the following equation
		\begin{equation}\label{eq:regularised eps integro-diff}
			\begin{cases}
				&\Delta U=\Delta w+ g- \LL_{w,\eps}(w)- \fint_\X  (g-\LL_{w,\eps}(w)\d\mm\, \in L^2(\mm),\\
				&\int U \, \d \mm=0,
			\end{cases}
		\end{equation}
		$T_{f,w}$ is well defined. Indeed for every $h \in L^2(\mm)$ with zero mean there exists a unique function $U \in\dom(\Delta)$ such that $\Delta U=h$ and $\int U \, \d \mm=0$ (see Proposition \ref{prop:poisson neum}).

		For every $U_1,U_2\in \dom_0(\Delta)$, using that $\int (h-\fint h)^2\d \mm\le \int h^2\d \mm$ for all $h \in L^2(\mm)$, we have
		\begin{align*}
			\|\Delta T_{f,w}(U_1-U_2)\|_{L^2(\mm)}^2 &\le  \|\Delta(U_1-U_2)- \LL_{w,\eps}(U_1-U_2)\|_{L^2(\mm)}^2\\
		\text{by \eqref{eq:2calderon nonsmooth}} \quad &\le (p-2)^2 \int |\H {U_1-U_2}|^2\le  (p-2)^2  \|\Delta (U_1-U_2)\|_{L^2(\mm)}^2.
		\end{align*}
	This shows that  $T_{f,w}$  is a contraction  with respect to the norm  $\|\Delta (\,\cdot\,)\|_{L^2(\mm)}$, provided $|p-2|<1.$
		Hence there exists a fixed point $U.$
        
		Since $U$ solves \eqref{eq:step 1} we have
		\[
		\Delta U=\Delta U-\LL_{w,\eps}(U)+\LL_{w,\eps}(U)=\Delta U-\LL_{w,\eps}(U)+ g- \fint  (g-\LL_{w,\eps}(w))\d\mm,
		\]
		hence arguing as above we obtain
		\begin{align*}
			\|\Delta U\|_{L^2(\mm)}&\le \left\|\Delta U-  \LL_{w,\eps}(U)+  g\right \|_{L^2(\mm)}	\le |p-2| \||\H U|\|_{L^2(\mm)}+ \left\|g\right\|_{L^2(\mm)}\\
            \text{by \eqref{eq:2calderon nonsmooth}} \quad & \le  |p-2| \|\Delta U\|_{L^2(\mm)}+ \left\|g\right\|_{L^2(\mm)},
		\end{align*}
		 which proves \eqref{eq:estimates solution}.
	\end{proof}

We pass to step B). The goal is to apply another fixed point argument to pass from the operator $\LL_{w,\eps}$ to the true $(\eps,p)$-Laplacian.

	\begin{theorem}[Existence of regular solutions for the regularized equation]\label{thm:regularity epsilon}
		Let $\Xdm$ be a bounded $\rcd(0,\infty)$ space and let $p\in(1,3).$ Then, for every $f \in L^2\cap L^{p'}(\mm)$ with zero mean there exists a unique function $u \in  \dom_0(\Delta)\cap W^{1,p}(\X)\subset W^{2,2}(\X)$ such that
		\begin{equation}\label{eq:ep laplacian}
			\Delta_{p,\eps}u=f.
		\end{equation} 
	\end{theorem}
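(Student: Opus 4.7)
The plan is to obtain $u$ as a fixed point of the nonlinear map $T\colon\dom_0(\Delta)\to\dom_0(\Delta)$, $T(w):=U_w$, where $U_w$ is the unique solution of the auxiliary linear problem \eqref{eq:step 1} produced by Proposition \ref{prop:step 1}. This choice is calibrated so that a fixed point automatically solves the $(\eps,p)$-Poisson equation: if $u=T(u)$, then by construction $\LL_{u,\eps}(u)=g_u-c$ with $g_u=f/(|Du|^2+\eps)^{(p-2)/2}$ and $c=\fint_\X(g_u-\LL_{u,\eps}(u))\,\d\mm$, and by Lemma \ref{lem:develop} this rewrites as
$$\Delta_{p,\eps}u=f-c\,(|Du|^2+\eps)^{(p-2)/2}.$$
Since $\X$ is bounded, the constant function $1$ belongs to $\LIP_{bs}(\X)$ and is therefore admissible as a test function, yielding $\int_\X \Delta_{p,\eps}u\,\d\mm=0$; combined with $\int_\X f\,\d\mm=0$ and the positivity of $(|Du|^2+\eps)^{(p-2)/2}$ this forces $c=0$, and hence $\Delta_{p,\eps}u=f$. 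The membership $u\in W^{1,p}(\X)$ then follows from Theorem \ref{thm:p calderon}, and uniqueness within $W^{1,p}(\X)$ is Proposition \ref{prop:existence of weak solutions}.

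The fixed point is produced by Schauder's theorem applied on the ball $B_R:=\{w\in\dom_0(\Delta):\,\|\Delta w\|_{L^2(\mm)}\le R\}$. By Proposition \ref{lem:norms}, $B_R$ is convex and precompact as a subset of $\W(\X)$, and a weak-compactness argument shows it is also closed there, hence compact. The invariance $T(B_R)\subset B_R$ for $R$ large rests on the estimate \eqref{eq:estimates solution}, which gives
$$\|\Delta T(w)\|_{L^2(\mm)}\le \frac{1}{1-|p-2|}\bigl\|f\,(|Dw|^2+\eps)^{(2-p)/2}\bigr\|_{L^2(\mm)}.$$
For $p\in[2,3)$ the factor $(|Dw|^2+\eps)^{(2-p)/2}$ is pointwise bounded by $\eps^{(2-p)/2}$, so $T$ maps the whole space $\dom_0(\Delta)$ into a fixed ball independent of $w$. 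For $p\in(1,2)$ the factor grows with $|Dw|$ and the estimate has to be closed by Hölder's inequality, exploiting the stronger integrability $f\in L^{p'}$ with $p'>2$ together with suitable $L^q$-integrability of $|Dw|$ available on $B_R$ in the bounded $\RCD$ setting.

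Continuity of $T$ in the $\W(\X)$-topology on $B_R$ is the second ingredient: given $w_n\to w$ in $\W(\X)$ with $w_n\in B_R$, one extracts an $\mm$-a.e.\ convergent subsequence of $|Dw_n|$ and applies dominated convergence to deduce $g_{w_n}\to g_w$ in $L^2(\mm)$, while the coefficients of $\LL_{w_n,\eps}$, uniformly bounded by $|p-2|$, converge $\mm$-a.e.\ to those of $\LL_{w,\eps}$. The uniform bound \eqref{eq:estimates solution} gives, along a subsequence, weak convergence of $U_{w_n}$ in $(\dom_0(\Delta),\|\Delta\cdot\|_{L^2})$ to some $\tilde U$; passing to the limit in \eqref{eq:step 1} and invoking uniqueness in Proposition \ref{prop:step 1} identifies $\tilde U=T(w)$, and the compactness of the embedding upgrades the convergence to strong $\W(\X)$-convergence. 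Schauder's fixed point theorem then produces the desired $u\in \dom_0(\Delta)\subset W^{2,2}(\X)$.

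The main obstacle is the a priori estimate in the case $p\in(1,2)$: the nonlinear coefficient $(|Dw|^2+\eps)^{(2-p)/2}$ is not pointwise controlled by $\|\Delta w\|_{L^2(\mm)}$ alone, so closing the estimate requires interpolating between the $W^{2,2}$-bound from Corollary \ref{thm:lapl reg} and suitable $L^q$-integrability of $|Dw|$, combined with the hypothesis $f\in L^{p'}$. The boundedness of $\X$ is essential throughout, both to invoke the compactness of Proposition \ref{lem:norms} and to legitimize the use of the constant function as a test function in the zero-mean reduction for $c$.
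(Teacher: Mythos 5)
Your overall architecture---Schauder's fixed point theorem applied to the map $w\mapsto U_w$ built from Proposition \ref{prop:step 1}, followed by Lemma \ref{lem:develop} and the zero-mean argument that kills the extra constant $c$---is exactly the strategy of the paper, and the identification $c=0$ is carried out the same way. For $p\in[2,3)$ the argument is correct: since $(2-p)/2\le 0$, the pointwise bound $(|Dw|^2+\eps)^{(2-p)/2}\le\eps^{(2-p)/2}$ makes $\|g_w\|_{L^2(\mm)}$ uniformly bounded in $w$, which delivers both the invariance of $B_R$ (the paper simply observes the image is a fixed bounded set and uses the precompact-image form of Schauder, a cosmetic difference) and an $L^2$-dominating function for the continuity step.

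The genuine gap is in the subquadratic range $p\in(1,2)$. There the exponent $(2-p)/2$ is positive, so $g_w=f\,(|Dw|^2+\eps)^{(2-p)/2}$ is not a priori in $L^2(\mm)$. You propose to close the estimate via H\"older's inequality, which---after computing the conjugate exponents, namely
\[
\int f^2(|Dw|^2+\eps)^{2-p}\,\d\mm\le\|f\|_{L^{p'}(\mm)}^{2}\,\bigl\|(|Dw|^2+\eps)^{p}\bigr\|_{L^1(\mm)}^{(2-p)/p},
\]
---requires $|Dw|\in L^{2p}(\mm)$ with $2p>2$. You assert that this integrability is ``available on $B_R$ in the bounded $\RCD$ setting'', but you do not justify it, and it is not available: $\RCD(0,\infty)$ spaces carry no Sobolev embedding, and the bound $\|\Delta w\|_{L^2(\mm)}\le R$ gives $w\in W^{2,2}(\X)$ and $|Dw|\in W^{1,2}(\X)$ but does not improve the integrability of $|Dw|$ beyond $L^2$. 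Even granting such an embedding, the resulting estimate on $\|\Delta T(w)\|_{L^2(\mm)}$ would grow with $R$, so the invariance $T(B_R)\subset B_R$ would be an additional nontrivial claim. The same obstacle reappears in the continuity step, where passing $g_{w_n}\to g_w$ in $L^2(\mm)$ by dominated convergence needs an $L^2$-dominant of the form $|f|(\phi^2+\eps)^{(2-p)/2}$ with $\phi$ dominating the subsequence $|Dw_n|$, again requiring the same unavailable integrability. The paper handles $p\in(1,2)$ differently: it truncates the gradient, replacing $h(\eps,w)$ by $f\,((|\nabla w|\wedge M)^2+\eps)^{(2-p)/2}$ (which is then bounded), runs the fixed point argument for each finite $M$, and finally lets $M\to+\infty$; see the last paragraph of the paper's proof and \cite[Theorem 5.6]{BV24}. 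That truncation-and-limit step is what is missing from your argument, and it cannot be replaced by the H\"older computation you sketch.
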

	\begin{proof}
    We will restrict to the case $p\ge 2 $. We will comment on the case $p<2$ at the end of the proof.
		By the scaling property of the statement we can assume that $\mm(\X)=1$.
		Fix $p\in[2,3)$ and $f\in L^2(\mm)$.
	 Fix $\eps>0$. We define the map $S_{f}: W^{1,2}_0(\X)\to \dom_0(\Delta)\subset  W^{1,2}_0(\X)$ given by $S_{f}(w)\coloneqq u$, where $u\in \dom_0(\Delta)$ is the (unique) solution to
		\begin{equation}\label{eq:pre Schauder fixed}
			\begin{cases}
				&	\LL_{w ,\eps}u=h(\eps, w )-\int (h(\eps, w )-	\LL_{\nabla w ,\eps}u)\d \mm, \\
				&\int u \, \d \mm=0,
			\end{cases}
		\end{equation}
		where $h(\eps,w)\coloneqq 	\frac{f}{\left (|\nabla w|^2+\eps \right)^{\frac{p-2}{2}}}\in L^2(\mm)$. 
		The map $S_{f}$ is well defined, since \eqref{eq:pre Schauder fixed} admits a unique solution $u$ by Proposition \ref{prop:step 1}. We also stress that $S_f$ is non-linear. 
        
        We aim to apply the Schauder fixed point theorem (see e.g.\ \cite[Corollary 11.2]{GilbargTrudinger}). We need to show that $S_f$ is continuous with relatively compact image.
		
		\smallskip
		
		\noindent \textsc{Precompact image.} The set $S_f(W^{1,2}_0(\X))$ is relatively compact in $W_0^{1,2}(\X).$  Indeed, from \eqref{eq:estimates solution} we have
		\begin{equation}\label{eq:a priori bounds}
			\|\Delta S_f(w)\|_{L^2(\mm)}\le  C_p\|h(\eps,w)\|_{L^2(\mm)}\le  
				C_p \eps^{\frac{2-p}{2}}\|f\|_{L^2(\mm)}
		\end{equation}
		for every $w \in W_0^{1,2}(\X)$. Moreover, the inclusion $\dom_0(\Delta)\hookrightarrow \W_0(\X)$ is compact by Proposition \ref{lem:norms}. 
		
		\noindent \textsc{Continuity.} The map $S_f$ is continuous in $W^{1,2}(\X)$. It is enough to prove that  if $w_n \to w$ in $\W(\X)$ then $\|\Delta(S_f(w_n)-S_f(w))\|_{L^2(\mm)}\to 0.$ Indeed the inclusion $(\dom_0(\Delta),\|\Delta(\cdot)\|_{L^2(\mm)})$ in $W^{1,2}(\X)$ is compact  and thus continuous. It is sufficient to show that this holds for every subsequence, up to a further subsequence. Fix then a non-relabeled subsequence.
		Up to extracting a further non-relabeled subsequence, we can then assume that $|\nabla w_n-\nabla w|\to 0$ and $|\nabla w_n|\to |\nabla w|$ in $\alme.$ For ease of notation we write  $h_n\coloneqq h(\eps,w_n),h\coloneqq h(\eps,w)$,  $\LL_n\coloneqq\LL_{ w_n,\eps}$, $\LL\coloneqq\LL_{ w ,\eps}$, $u_n\coloneqq S_f(w_n)$ and $u\coloneqq S_f(w).$ 
 Therefore using the equation \eqref{eq:pre Schauder fixed} and the inequality above we can compute
		\begin{align*}
			\|\Delta& (u_n-u)\|_{L^2(\mm)}\\
			&= \|\Delta(u_n-u)+  ( h_n-\LL_n(u_n))+ (\LL(u)- h)-
			\int_\X  ( h_n-\LL_n(u_n))+ (\LL(u)- h) \|_{L^2(\mm)}\\
			&\le \|\Delta(u_n-u)-  \LL_n(u_n)+ h_n+ \LL(u)- h\|_{L^2(\mm)}\\
			&\le \|\Delta(u_n-u)-\LL_n(u_n)+ \LL(u)\|_{L^2(\mm)}+\| h_n- h\|_{L^2(\mm)}\\
			&\le \begin{multlined}[t]\|\Delta(u_n-u)-  \LL_n(u_n-u)\|_{L^2(\mm)}+\| (\LL(u)-\LL_n(u))\|    +\|(h_n-h)\|_{L^2(\mm)}\end{multlined}\\
			& \le\begin{multlined}[t] |p-2| \|\Delta(u_n-u)\|_{L^2(\mm)}+C_N\|\LL(u)-\LL_n(u)\|    +\|h_n-h\|_{L^2(\mm)},\end{multlined}
		\end{align*}
		 Since $ |p-2|<1$, we can absorb the first term of the right-hand side into the left-hand side. Hence, it is sufficient to show that all the other terms on the right-hand side go to zero as $n \to +\infty$.
		From the definition of the operators $\LL_n,\LL$ it is easily checked that
		\begin{align*}
			\left |{\LL_{n}(u)-\LL(u)}\right|\to 0, \quad  \text{$\mea$-a.e.}, \quad \left |{\LL_n(u)-\LL(u)}\right|\le 2|p-2||\H u|.
		\end{align*}
		Hence from the dominated convergence theorem we deduce that $\|\LL_{n}(u)-\LL(u)\|_{L^2(\mm)}\to 0$. The argument to show that $\|h_n-h\|_{L^2(\mm)}\to 0$ is similar.
		\smallskip
		
		Hence  we deduce that $S_f$ has a fixed point $u\in \dom_0(\Delta)$ which then satisfies
		\begin{equation}\label{eq:almost pde}
			\LL_{u,\eps}(u)=h(\eps,u)-\underbrace{\int(h(\eps,u)-	D_{\eps,p}(u))\d \mm}_{\lambda_u\coloneqq }.
		\end{equation}
		\smallskip
	From Lemma \ref{lem:develop} and  \eqref{eq:almost pde} (recalling also \eqref{eq:compatibility}) we deduce that
		\[
		\Delta_{p,\eps} u= f- (|D u|^2+\eps )^{\frac{p-2}{2}}\lambda_u, \quad \text{in $\X$.}
		\] 
		Since both $\Delta_{p,\eps} u$ and $f$ has zero mean we deduce that $\lambda_u=0$ and thus \eqref{eq:ep laplacian}. Finally the fact that $u\in W^{1,p}(\X)$ follows from Theorem \ref{thm:uniform st} while the uniqueness from Proposition \ref{prop:existence of weak solutions}. 
		
In the case $p<2$ the core of the argument is the  same only that the function  $h(\eps,w)=  (|\nabla w|^2+\eps)^{\frac{2-p}{2}}$ is not necessarily in $L^2(\mm)$. Hence we need first to perform a cut-off and consider the function $h(\eps,w)\coloneqq ((|\nabla w|\wedge M)^2+\eps)^{\frac{2-p}{2}}$ and then send $M\to +\infty$ (see \cite[Theorem 5.6]{BV24} for the details).
\end{proof}

	\subsection{Step 4: going back to the \texorpdfstring{$p$}{p}-Laplacian and conclusion}
	\label{sec: main results}
	
	We can finally pass to the limit sending $\eps\to 0^+$ combining the results of the previous sections and obtaining the regularity result for the $p$-Laplacian.
	\begin{proof}[Proof of Theorem \ref{thm:main rcd inf}]
		We  will prove the result for $f \in L^2\cap L^{p'}(\mm)$. From this the general case can be obtained with an additional approximation procedure (see the proof of \cite[Theorem 6.1]{BV24}). Fix a sequence $\eps_n>0$ such that  $\eps_n\to 0$.  Note that $f$ must have zero mean.  By Theorem \ref{thm:regularity epsilon}  for every $n \in\nn$ there exists a unique function $u_n \in  \dom_0(\Delta)\cap W^{1,p}(\X)$ such that $\Delta_{p,\eps_n}u_n=f$. Moreover, by Proposition \ref{prop:existence of weak solutions} it holds $u_n\to u$ in $W^{1,p}(\X)$. In particular, setting $v_n\coloneqq (|D u_n|^2+\eps)^\frac{p-2}2\nabla u_n$ and $v\coloneqq |D u|^{p-2}\nabla u$ we have $|v_n-v|\to 0$ $\mea$-a.e.. Moreover from Theorem \ref{thm:uniform st} we have that $v_n\in H^{1,2}_C(T\X)$ and 
        \begin{equation}\label{eq:est vn}
           \int_\X |\nabla v_n |^2+|v_n|^2 \d \mm\le C_{p,K} \int f^2\d \mm+ C_{p,K}\||D u_n|^{p-1}+\eps_n|D u_n|\|_{L^1(\mm)}.
        \end{equation}
        Since $u_n$ is converging in $W^{1,p}(\X)$ we deduce that $v_n$ is bounded in $H^{1,2}_C(\mm).$
        In particular $v_n\to v$ in $L^2(T\X)$ and by the lower semicontinuity of the energy  we obtain $v\in H^{1,2}_C(T\X)$ together with \eqref{eq:p-calderon intro}.
	\end{proof}

		\def\cprime{$'$}

\end{document}